\documentclass[a4paper,10pt,twoside]{amsart}
\linespread{1.2}

\usepackage[paper=a4paper,                     
            left=27mm,                         
            right=27mm,                        
            top=30mm,                          
            bottom=30mm,                       
            bindingoffset=0mm                  
           ]{geometry}


\usepackage[english]{babel}
\usepackage{amsmath,amssymb,amscd,amsfonts,amsthm}

\usepackage{caption}
\captionsetup{format=hang}
\usepackage{subcaption}

\usepackage{wasysym}

\usepackage{color}
\usepackage[ps,all,arc,rotate]{xy}

\usepackage{mathscinet}
\usepackage{amsrefs}

\usepackage{MnSymbol}

\usepackage{tikz}
\usepackage{tikz-cd}
\usepackage{faktor}

\usepackage{xcolor}
\usepackage{extarrows}

\newtheorem{prop}{Proposition}[section]
\newtheorem{lemma}[prop]{Lemma}

\newtheorem{theorem}[prop]{Theorem}
\newtheorem{cor}[prop]{Corollary}

\newtheorem*{theorem*}{Theorem}

\theoremstyle{definition}
\newtheorem{defn}[prop]{Definition}

\newtheorem{rmk}[prop]{Remark}
\newtheorem{ex}[prop]{Example}

\newtheorem{notation}[prop]{Notation}
\newtheorem*{ex*}{Example}

\newcommand{\bd}{\begin{defn}}
\newcommand{\ed}{\end{defn}}
\newcommand{\bl}{\begin{lemma}}
\newcommand{\el}{\end{lemma}}
\newcommand{\bex}{\begin{ex}}
\newcommand{\eex}{\end{ex}}
\newcommand{\br}{\begin{rmk}}
\newcommand{\er}{\end{rmk}}
\newcommand{\bp}{\begin{prop}}
\newcommand{\ep}{\end{prop}}
\newcommand{\bc}{\begin{cor}}
\newcommand{\ec}{\end{cor}}

\DeclareMathOperator{\Div}{Div}
\DeclareMathOperator{\Pic}{Pic}
\DeclareMathOperator{\Cl}{Cl}
\DeclareMathOperator{\spec}{Spec}
\DeclareMathOperator{\proj}{Proj}
\DeclareMathOperator{\aut}{Aut}
\DeclareMathOperator{\autgr}{Aut_{g}}
\DeclareMathOperator{\Hom}{Hom}

\DeclareMathOperator{\id}{Id}

\DeclareMathOperator{\lie}{Lie }
\DeclareMathOperator{\stab}{Stab }
\DeclareMathOperator{\V}{\textbf{V}}

\DeclareMathOperator{\sset}{\subset}

\DeclareMathOperator{\codim}{codim}

\DeclareMathOperator{\End}{End}
\DeclareMathOperator{\Endgr}{End_{g}}
\DeclareMathOperator{\pr}{pr}
\DeclareMathOperator{\defect}{Def}
\DeclareMathOperator{\diag}{diag}

\DeclareMathOperator{\conv}{Conv}

\DeclareMathOperator{\Span}{Span}
\DeclareMathOperator{\cox}{Cox}

\DeclareMathOperator{\lcm}{lcm}

\DeclareMathOperator{\wt}{wt}

\DeclareMathOperator{\NP}{NP}

\DeclareMathOperator{\D}{\mathbf D}

\DeclareMathOperator{\T}{\mathbf T}

\newcommand{\sslash}{\mathbin{/\mkern-4mu/}}

\newcommand{\longra}{\longrightarrow}
\newcommand{\into}{\hookrightarrow}

\newcommand{\sig}{\sigma}
\newcommand{\Sig}{\Sigma}

\newcommand{\inv}{^{-1}}

\newcommand{\Xs}{X_{\Sigma}}
\newcommand{\oone}{\cO(1)}

\newcommand{\lama}{\lambda_{\underline{a}}}
\newcommand{\YQS}{\cY^{\text{QS}}}
\newcommand{\YNQS}{\cY^{\text{NQS}}}
\newcommand{\xr}{x_{\rho}}
\newcommand{\Xzm}{X_{\min}^0}
\newcommand{\Zm}{Z_{\min}}
\newcommand{\QS}{\text{QS}}

\def\cL{\mathcal L}
\def\cM{\mathcal M}\def\cO{\mathcal O}
\def\cT{\mathcal T}

\def\cY{\mathcal Y}

 \def\s{\text{s}} \def\ss{\text{ss}}  \def\SM{\text{SM}}\def\fingen{\text{fg}}

\def\AA{\mathbb A}
\def\GG{\mathbb G}

\def\PP{\mathbb P}
\def\QQ{\mathbb Q}

\def\ZZ{\mathbb Z}

\def\fg{\mathfrak g}
\def\fl{\mathfrak l}

\def\fu{\mathfrak u}

\def\fC{\mathfrak C}

 \def\GL{\mathrm{GL}} \def\SL{\mathrm{SL}} \def\PGL{\mathrm{PGL}} 


\usetikzlibrary{shapes.misc}
\tikzset{cross/.style={cross out, draw=black, fill=none, minimum size=2*(#1-\pgflinewidth), inner sep=0pt, outer sep=0pt}, cross/.default={2pt}}

\makeatletter

\newcommand{\xdashrightarrow}[2][]{\ext@arrow 0359\rightarrowfill@@{#1}{#2}}
\newcommand{\xdashleftarrow}[2][]{\ext@arrow 3095\leftarrowfill@@{#1}{#2}}
\newcommand{\xdashleftrightarrow}[2][]{\ext@arrow 3359\leftrightarrowfill@@{#1}{#2}}
\def\rightarrowfill@@{\arrowfill@@\relax\relbar\rightarrow}
\def\leftarrowfill@@{\arrowfill@@\leftarrow\relbar\relax}
\def\leftrightarrowfill@@{\arrowfill@@\leftarrow\relbar\rightarrow}
\def\arrowfill@@#1#2#3#4{%
  $\m@th\thickmuskip0mu\medmuskip\thickmuskip\thinmuskip\thickmuskip
   \relax#4#1
   \xleaders\hbox{$#4#2$}\hfill
   #3$%
}
\makeatother

\title{On the moduli of hypersurfaces in toric orbifolds}
\author{Dominic Bunnett}
\date{}
\begin{document}
\begin{abstract}
We construct and study the moduli of stable hypersurfaces in toric orbifolds. Let $X$ be a projective toric orbifold and $\alpha \in \Cl(X)$ an ample class. The moduli space is constructed as a quotient of the linear system $|\alpha|$ by $G = \aut(X)$.  Since the group $G$ is non-reductive in general, we use new techniques of non-reductive geometric invariant theory. Using the $A$-discriminant of Gelfand, Kapranov and Zelevinsky, we prove semistability for quasismooth hypersurfaces of toric orbifolds. Further, we prove the existence of a quasi-projective moduli space of quasismooth hypersurfaces in a weighted projective space when the weighted projective space satisfies a certain condition. We also discuss how to proceed when this condition is not satisfied. We prove that the automorphism group of a quasismooth hypersurface of weighted projective space is finite excluding some low degrees.
\end{abstract}
\maketitle

\section{Introduction}

We study the moduli of hypersurfaces in toric orbifolds.
The main tool we use to do this is geometric invariant theory (GIT), both reductive and non-reductive.
We work over an algebraically closed field $k$ of characteristic 0.

Suppose that $X$ is a projective variety and $\alpha \in \Cl(X)$ is an ample class.
We are interested in the quotient stack
\[\cM(X,\alpha) = \left[ \, \faktor{|\alpha|}{G} \, \right] \]
which we define as the moduli stack of hypersurfaces of class $\alpha$ in $X$ where $G=\aut(X,\alpha)$, the automorphisms of $X$ fixing $\alpha$.
As it is, this stack has very little chance of possessing any desirable properties and, as with all moduli problems, we must define a notion of stability which will instruct us to remove certain hypersurfaces to obtain a stack with a more manageable geometry.
In this paper, we apply a non-reductive GIT-stability condition to the case where $X$ is a toric orbifold, which provides a coarse moduli space (which is a variety) for a substack and prove in a number of cases that quasismooth hypersurfaces are semistable.
Further we shall be able to prove stability when $X$ falls into a class of certain weighted projective spaces.
From this point on, when we refer to a moduli space of hypersurfaces, we mean a coarse moduli space of a substack of $\cM(X,\alpha)$.

The moduli space of stable hypersurfaces in projective space was constructed by Mumford using reductive GIT.
Consider the $n$-dimensional projective space $\PP^n$ and let $d>2$ be an integer.
The linear system $Y_d = |dH| = \PP(k[x_0 , \dots , x_n]_d)$ is a parameter space for all hypersurfaces of degree $d$ in $\PP^n$ and the action of the reductive group $\aut(\PP^n) = \PGL_{n+1}$ on $\PP^n$ extends naturally to an action of $\PGL_{n+1}$ on $Y_d$.
It turns out to be hard to describe the open set of stable points in $Y_d$, let alone the actual quotient variety.
However, what Mumford does prove is that if $d>2$ (and $d>3$ if $n=1$), then a smooth hypersurface is stable.
Thus reductive GIT constructs a moduli space of smooth hypersurfaces.

By a toric orbifold we mean a projective toric variety with at worst orbifold singularities.
One cannot mirror the construction of moduli spaces of hypersurfaces given above for toric orbifolds: the algebraic groups in question are in general non-reductive. Recent work of Kirwan, B\'erczi, Doran and Hawes \cites{monster, BDHK_gr_uni,DBHK_proj_compl} develops a non-reductive GIT and allows one to construct such moduli spaces as non-reductive quotients. 

There has been much work in search of GIT for non-reductive groups; see \cite{DK} for a comprehensive account of work undertaken in this area. We use the non-reductive GIT (NRGIT) developed in \cites{BDHK_gr_uni,DBHK_proj_compl,monster} where one requires that the unipotent radical of the group is graded (Definition \ref{grd_uni_rad_def}). For these theorems to apply, we must assume that a property we call the $(\fC)$ condition holds; see Definition \ref{ss_equals_s}.

In \cite{cox_hom}, Cox showed that the automorphism group of a complete simplicial toric variety can be calculated from graded automorphisms of the Cox ring. We use Cox's construction and show that the automorphism group of a toric orbifold admits a graded unipotent radical (see Proposition \ref{pos_grad} and \cite[Section 4]{BDHK_gr_uni}) and thus the theory of NRGIT is applicable to the problem of moduli of hypersurfaces in toric orbifolds.

Let $X$ be a weighted projective space where the condition $(\fC)$ is satisfied for the action of $\aut(X)$. Theorem \ref{thm_main_NP_proof} proves that a Cartier quasismooth hypersurface in $X$ is stable. Let $\cY_d$ be the parameter space of degree $d$ hypersurfaces. The group $\aut(X)$ acts on $\cY_d$ and we denote the stable locus by $\cY_d^{\s}$ and the quasismooth locus by $\cY_d^{\QS}$. We use NRGIT to construct a quotient space of such hypersurfaces which is a coarse moduli space. In particular, this coarse moduli space is a quasi-projective variety.

\begin{theorem*}[Theorem \ref{thm_main_NP_proof}]
Let $X = \PP(a_0, \dots , a_n) = \proj k[x_0, \dots , x_n]$ be a well-formed weighted projective space and let $d \geq \max\{a_0, \dots , a_n\} + 2$. Suppose that the $(\fC)$ condition holds for the action of $G=\aut(X)$ on $\cY_d = \PP(k[x_0 , \dots , x_n]_d)$. Then a quasismooth hypersurface of degree $d$ is a stable hypersurface. In other words, there is an inclusion of open subsets
\[\YQS_d \sset \cY_d^{\s}.\]
In particular, there exists a geometric quotient $\YQS_d / G$ and hence a coarse moduli space of quasismooth hypersurfaces of degree $d$ in $X$. Moreover, the NRGIT quotient $\cY_d \sslash \! G$ is a compactification of $\YQS_d / G$.
\end{theorem*}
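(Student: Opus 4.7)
The plan is to invoke the NRGIT stability criterion for actions with graded unipotent radical from \cites{BDHK_gr_uni,DBHK_proj_compl}. By Proposition~\ref{pos_grad} and Cox's description of $\aut(X)$ for a weighted projective space, we have a semidirect decomposition $G = U \rtimes L$ with $U$ graded unipotent and $L$ reductive; together with the $(\fC^*)$ hypothesis, this is precisely the input required by the main NRGIT theorem. That theorem reduces $G$-stability on $\cY_d$ to two conditions: a minimum-weight non-vanishing condition under the central grading $\CC^*$ of $U$, and classical $L$-stability on the resulting minimum-weight slice.

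First I would make the weight decomposition of $k[x_0,\dots,x_n]_d$ under the grading explicit. For well-formed $\PP(a_0,\dots,a_n)$, the unipotent radical is generated by the elementary substitutions $x_i \mapsto x_i + m$ where $m$ is a monomial of weighted degree $a_i$ in the variables of strictly smaller weight; this yields a natural positive grading on $U$, and one reads off the induced weights on monomials $x^\alpha$ of total degree $d$. Using the Jacobian characterisation of quasismoothness---the common zero locus of $\partial F/\partial x_0,\dots,\partial F/\partial x_n$ in $\spec k[x_0,\dots,x_n]$ is contained in the irrelevant ideal---I would then show that any quasismooth $F$ has non-vanishing minimum-weight component in the sense required by the NRGIT criterion. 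The degree hypothesis $d \geq \max\{a_i\}+2$ is essential at this step: it ensures that every variable $x_i$ appears in a degree-$d$ monomial alongside a variable of strictly smaller weight, so that quasismoothness is incompatible with $F$ being concentrated away from the minimum-weight stratum.

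Second, on the minimum-weight slice the residual action is by the reductive Levi $L$. Here I would combine the Hilbert--Mumford numerical criterion with the $A$-discriminant tool advertised in the abstract and used elsewhere in the paper for semistability: taking $A$ to be the set of lattice points parametrising degree-$d$ monomials, $\Delta_A$ is an $L$-semi-invariant that vanishes exactly on the non-quasismooth locus, so a quasismooth $F$ is $L$-semistable, and the $(\fC^*)$ hypothesis promotes semistability to stability on the slice. Combining the two steps yields the inclusion $\YQS_d \sset \cY_d^{\s}$; the existence of the geometric quotient and the projective NRGIT compactification $\cY_d \sslash G \supseteq \YQS_d/G$ are then formal consequences of the cited NRGIT theorems.

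The main obstacle I anticipate is the unipotent step: translating the Jacobian quasismoothness condition into the precise combinatorial minimum-weight non-vanishing statement demanded by the NRGIT criterion requires a careful analysis of how $U$ permutes monomial supports in $k[x_0,\dots,x_n]_d$, and the sharpness of the degree bound $d \geq \max\{a_i\}+2$ is forced at exactly this point. By contrast, the $L$-step is essentially reductive GIT via the $A$-discriminant, and the passage from semistability to stability is a black-box application of the $(\fC^*)$ assumption.
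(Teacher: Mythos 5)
There is a genuine gap at the decisive step: your argument never establishes \emph{stability} for the reductive part of the action, only (at best) semistability. The mechanism you propose for the Levi step --- that $\Delta_A$ is a semi-invariant cutting out the non-quasismooth locus, so a quasismooth $F$ is $L$-semistable, and that ``the $(\fC^*)$ hypothesis promotes semistability to stability on the slice'' --- does not work. First, $\Delta_A$ is a semi-invariant for a non-trivial character of the Levi (a product of general linear groups), so its non-vanishing certifies semistability only after twisting the linearisation by $\chi_A$, and in any case a single non-vanishing (semi-)invariant only places the origin \emph{in} the weight polytope, never in its \emph{interior}, which is what stability requires; moreover, Remark \ref{qs_subset_non_discrim} of the paper notes that $\YQS$ and $\cY_{\Delta_A}$ do not coincide in general. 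Second, and more seriously, the condition $(\fC^*)$ concerns triviality of $U$-stabilisers of points of $\Zm$; it is the hypothesis needed to invoke the $\hat{U}$-theorem and the non-reductive Hilbert--Mumford criterion at all, and it says nothing about whether a point that is semistable for the residual reductive action is actually stable. The passage from semistability to stability is therefore unproved in your proposal, and that is exactly where the content of the theorem lies.

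The paper supplies precisely this missing ingredient by a different route. By the non-reductive Hilbert--Mumford criterion (Theorem \ref{thm_nrgit_hm_2}), $\cY^{\s,G} = \bigcap_{g\in G} g\cdot \cY^{\s,T}$ for a maximal torus $T$ containing the grading $\GG_m$; since $\YQS$ is $G$-invariant, it suffices to prove $\YQS \sset \cY^{\s,T}$. Torus stability is then established by Lemma \ref{lemma_qs_NP_contains_origin}: if the Newton polytope of a quasismooth $f$ did not contain the origin in its interior, all monomials of $f$ would lie in a closed half-space through the origin, and Fletcher's combinatorial quasismoothness criterion (Theorem \ref{thm_flecther_qs_criterion}), together with the bound $d \geq \max\{a_0,\dots,a_n\}+2$, rules this out. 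Your first (unipotent) step is sound and corresponds to Proposition \ref{Uhat_stab}, but to complete the argument you would need to replace the $A$-discriminant step by a Newton-polytope argument of this kind, proving interior containment of the origin for the weight polytope of every quasismooth hypersurface.
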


The proof of the theorem relies on a discrete-geometric version of the Hilbert-Mumford criterion for NRGIT. We also provide an explicit construction for quasismooth hypersurfaces in $X = \PP(1, \dots , 1,r)$ which works without the heavy machinery of NRGIT.

To study the quasismooth locus in a given linear system, one uses the $A$-discriminant, defined and studied in \cite{GKZ} for general toric varieties and denoted by $\Delta_A$. We show that $\YQS_d \sset (\cY_d)_{\Delta_A}$ and discuss the possible difference between these two sets. We prove in Section \ref{sec_A_discrim} that the $A$-discriminant can be interpreted as an invariant section of an appropriate line bundle, just as for the classical discriminant.

\begin{theorem*}[Corollary \ref{invar_A_discrim}]
Let $X$ be the toric variety associated to a polytope $P$ and let $A$ be the lattice points of $P$. The $A$-discriminant $\Delta_{A}$ is a semi-invariant section for the $G$-action on $\cY_\alpha$ and a true $U$-invariant, where $U \sset G$ is the unipotent radical of $G$.
\end{theorem*}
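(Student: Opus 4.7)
The plan is to exploit the fact that $\Delta_A$ is (up to scalar) the unique irreducible polynomial in the coefficients $(c_a)_{a \in A}$ cutting out the locus of singular hypersurfaces, together with the elementary observation that an irreducible polynomial whose zero locus is preserved by a group action must itself transform by a character of the group.

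First I would verify that the zero locus $V(\Delta_A) \sset \cY_\alpha$ is $G$-stable. Since $G = \aut(X)$ acts on $X$ by algebraic automorphisms, any $g \in G$ sends the family of singular hypersurfaces (those failing the GKZ non-degeneracy condition defining $V(\Delta_A)$) to itself, because singularity of a hypersurface in $X$ is a geometric property preserved by automorphisms of $X$. Hence $g \cdot V(\Delta_A) = V(\Delta_A)$ inside $\cY_\alpha$ for every $g \in G$.

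The main step is then to promote this set-theoretic invariance to semi-invariance of the defining polynomial. Here I would use that $\Delta_A$ is irreducible by the classical theorem of Gelfand--Kapranov--Zelevinsky \cite{GKZ}, and that the $G$-linearisation of the relevant $\cO_{\cY_\alpha}(k)$ arising from Cox's description of $\aut(X)$ acting on sections of $\cO_X(\alpha)$ lets one view $\Delta_A$ as a global section of a line bundle on $\cY_\alpha$. For any $g \in G$ the translate $g \cdot \Delta_A$ is another section of this same line bundle with the same zero locus, so by irreducibility it differs by a nonzero scalar $\chi(g) \in k^{\times}$. The assignment $g \mapsto \chi(g)$ is multiplicative and algebraic in $g$, hence defines a character $\chi : G \to \GG_m$, which is precisely the statement that $\Delta_A$ is $G$-semi-invariant. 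Since any morphism of algebraic groups from a unipotent group to $\GG_m$ is trivial, the restriction $\chi|_U$ is trivial and $\Delta_A$ is a genuine $U$-invariant. The step I expect to require the most care is cleanly identifying $\Delta_A$ with a section of a specific $G$-linearised line bundle and confirming that the action of $G$ permutes its zero locus in exactly the sense above; once that bookkeeping is in place, the rest of the argument is purely formal.
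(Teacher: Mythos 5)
Your second step --- upgrading set-theoretic invariance of the zero locus to semi-invariance of the defining section via irreducibility of $\Delta_A$, multiplicativity of $g \mapsto \chi(g)$, and the absence of nontrivial characters of a unipotent group --- is exactly the paper's proof of Corollary \ref{invar_A_discrim}. The gap is in your first step. You identify $\V(\Delta_A)$ with ``the family of singular hypersurfaces'' and deduce $G$-invariance from the fact that automorphisms of $X$ preserve singularity. But $\nabla_A$ is \emph{not} the locus of singular (non-quasismooth) hypersurfaces: by Definition \ref{A_discrim_def} it is the closure of the locus of sections with a singular point \emph{on the torus} $T \sset X$, and the paper stresses (Remark \ref{qs_subset_non_discrim}) that $\YQS_\alpha \subseteq (\cY_\alpha)_{\Delta_A}$ is in general a \emph{strict} inclusion, i.e.\ $\nabla_A$ is only one irreducible component of $\YNQS_\alpha$. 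Since $G$ does not preserve the torus $T$, the $G$-invariance of this particular component is not automatic; it is the actual content of Theorem \ref{invar_A_discrim_locus}, where the paper introduces the incidence variety $W \sset X \times \cY_\alpha$, shows that $p_1$ is flat with irreducible linear fibres over the $G$-sweep $G \cdot T$ (Proposition \ref{p1_fibre}, Corollary \ref{p1_flat_Y'}), deduces that $W' = p_1\inv(G \cdot T)$ is irreducible, and finally uses a codimension count to conclude $\nabla_A = \overline{p_2(W')}$, a manifestly $G$-invariant set.

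Your identification of $\V(\Delta_A)$ with the whole singular locus is valid essentially only when $G$ acts transitively on $X$, i.e.\ for products of projective spaces (cf.\ the example following Corollary \ref{invar_A_discrim}). If you want to avoid the incidence-variety argument, a possible shortcut is: $\YNQS_\alpha$ is closed and $G$-invariant, $\nabla_A$ is an irreducible component of it (Remark \ref{rmk_geom_meaning_qs_discr}, which itself rests on Proposition \ref{A_discrim_locus}), and a connected group must fix each irreducible component of an invariant closed set; but then you must still justify that $\nabla_A$ genuinely occurs as a component and address connectedness of the relevant automorphism group, so some nontrivial input beyond ``automorphisms preserve singularities'' is unavoidable.
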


Restricting our attention to a weighted projective space $X$, we prove in Section 3 that the stabiliser groups of the action of $G=\aut(X)$ on $\YQS_d$ is finite for $d \geq \max(a_0, \dots , a_n)+2$. Denote the stabiliser group by $\aut(Y;X)$.

\begin{theorem*}[Theorem \ref{finite_stabilisers_thm}]
A quasismooth hypersurface in $X = \PP(a_0, \dots , a_n)$ of degree \linebreak $d \geq \max\{a_0, \dots , a_n\} + 2$ has only finitely many automorphisms coming from the automorphisms of the ambient weighted projective space. That is, the group $\aut(Y;X)$ is finite for a quasismooth hypersurface $Y \sset \PP(a_0,\dots,a_n)$.
\end{theorem*}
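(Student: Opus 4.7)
The plan is to prove finiteness of $\aut(Y;X)$ by showing that its Lie algebra vanishes; in characteristic zero this is equivalent to $\aut(Y;X)$ being finite as an algebraic group. I would first invoke Cox's description of the automorphism group of a well-formed weighted projective space: writing $R=k[x_0,\dots,x_n]$ with $\deg x_i=a_i$ and $E=\sum_i a_i x_i\,\partial_{x_i}$ for the Euler derivation, one has
\[
\mathrm{Lie}\,G=\bigoplus_{i=0}^n R_{a_i}\,\partial_{x_i}\;\Big/\;k\cdot E.
\]
An element $D=\sum_i f_i\,\partial_{x_i}$ projects to a class in $\mathrm{Lie}\,\aut(Y;X)$ precisely when its flow preserves the hypersurface $[f]\in\PP(R_d)$, that is, when $D(f)=\mu f$ for some $\mu\in k$. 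The task therefore reduces to showing that the preimage under the linear map
\[
\phi\colon \bigoplus_i R_{a_i}\longrightarrow R_d,\qquad (f_0,\dots,f_n)\longmapsto \sum_i f_i\,\partial_{x_i}f
\]
of the line $k\cdot f$ is exactly the Euler line $k\cdot(a_0x_0,\dots,a_nx_n)$; by Euler's identity $\phi(a_0x_0,\dots,a_nx_n)=d\cdot f$, so one inclusion is automatic.

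The central commutative-algebra input is quasismoothness. Euler's identity, combined with $\Char k=0$ and $d>0$, gives $(f,\partial_0 f,\dots,\partial_n f)=(\partial_0 f,\dots,\partial_n f)$ in $R$, and quasismoothness says this ideal has vanishing locus $\{0\}\subset\AA^{n+1}$. The $n+1$ partials therefore form a homogeneous regular sequence, the Koszul complex resolves $R/(\partial_0 f,\dots,\partial_n f)$, and every syzygy among the partials is generated by the Koszul relations $\sigma_{ij}=(\partial_j f)\,e_i-(\partial_i f)\,e_j$ for $i<j$. Any syzygy $(f_0,\dots,f_n)\in\ker\phi$ with $\deg f_i=a_i$ then expresses as $\sum_{i<j}p_{ij}\sigma_{ij}$ with $\deg p_{ij}=a_i+a_j-d$.

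Under the bound $d\geq\max_k a_k+2$ this exponent is strictly negative whenever either $a_i$ or $a_j$ lies strictly below $\max_k a_k$, forcing $p_{ij}=0$ for all but those index pairs whose weights both attain the maximum. The main obstacle is the residual case analysis on these exceptional pairs: here $\deg p_{ij}$ can be non-negative and genuine Koszul syzygies may arise. I would resolve this by combining the well-formedness of $X$, which restricts the multiplicity of the maximum weight, with an explicit inspection of the partials $\partial_i f$ in the boundary degrees $d-a_i\leq 2$, and show that the only derivations $D$ with $D(f)\in k\cdot f$ arising this way are proportional to $E$. Once $\ker\phi$ is identified modulo the Euler line, the full preimage $\phi^{-1}(k\cdot f)$ coincides with $k\cdot E$, so $\mathrm{Lie}\,\aut(Y;X)=0$ and $\aut(Y;X)$ is finite.
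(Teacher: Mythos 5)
Your route is essentially the paper's: pass to Lie algebras, use the Euler identity to convert $D(f)=\mu f$ into a syzygy $\sum_i p_i\,\partial_{x_i}f=0$ with $p_i=D(x_i)-\tfrac{\mu}{d}a_ix_i$, invoke quasismoothness to make the partials a regular sequence, and finish with a degree count. Your Koszul packaging is a structured form of the paper's step (namely that $p_i\,\partial_{x_i}f\in(\partial_{x_j}f: j\neq i)$ and $\partial_{x_i}f$ is a nonzerodivisor modulo the others, hence $p_i$ lies in that ideal); the two degree counts are equivalent, and both are decisive exactly when $d>a_i+a_j$ for every pair $i\neq j$.

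The gap is in your endgame. First, the dichotomy is wrong: $a_i+a_j-d$ need not be negative when one of the two weights lies below the maximum. For $a_i=\max_k a_k=5$, $a_j=4$, $d=7=\max_k a_k+2$ one gets $a_i+a_j-d=2\geq 0$, so the exceptional pairs are \emph{all} pairs with $a_i+a_j\geq d$, not only those attaining the maximum twice. Second, and more seriously, the deferred ``explicit inspection'' is where all the content sits, and it cannot succeed by degree bookkeeping alone: whenever $d=a_i+a_j$ the Koszul field $(\partial_{x_j}f)\,\partial_{x_i}-(\partial_{x_i}f)\,\partial_{x_j}$ is a degree-zero graded derivation, annihilates $f$ identically, and is not proportional to $E$. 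Concretely, for $f=x_0^4+x_1^4+x_2^2+x_3^2$ in $\PP(1,1,2,2)$ with $d=4=\max_k a_k+2$ (a quasismooth hypersurface) it is the infinitesimal rotation $2x_3\partial_{x_2}-2x_2\partial_{x_3}$, which integrates to a positive-dimensional subgroup of graded automorphisms fixing $f$. So in these boundary degrees the conclusion $\ker\phi\subseteq k\cdot E$ that you are aiming for is not available, and your sketch provides no mechanism for excluding them. (For comparison, the paper's proof at the corresponding point asserts that $\deg p_i=a_i$ together with $d\geq\max_k a_k+2$ forces $p_i=0$; your reformulation makes explicit that this step actually needs $d>a_i+a_j$ for all pairs, so the boundary case requires either a strengthened degree hypothesis or a genuinely separate argument.)
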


A corollary of this theorem is the existence of a moduli space as an algebraic space. This is a direct consequence of the Keel-Mori theorem. However, Theorem \ref{thm_main_NP_proof} implies that this algebraic space is in fact a quasi-projective variety.

There are many different classes of varieties which present themselves as hypersurfaces in weighted projective spaces; for example, genus 2 curves are degree 6 curves in $\PP(1,1,3)$, Petri special curves are degree 6 curves in $\PP(1,1,2)$ and degree 2 del Pezzo surfaces are degree 4 surfaces in $\PP(1,1,1,2)$ to name a few. Hence we find constructions of new moduli spaces or new constructions of well-known moduli spaces.

There are many applications of the constructions we provide in this paper. The moduli spaces and notions of stability we present in this paper are related to K-stability and mirror symmetry and can be seen as a natural extension of work of Batyrev and Cox \cite{cox_hodge}.

\noindent\textbf{Acknowledgements} This work forms part of the author's thesis which was funded by the FU Berlin.
The author is very grateful to my supervisor Victoria Hoskins for her support and guidance over the
last few years.
The final touches to this paper were made whilst at Oxford university and the author would also like to thank Frances Kirwan for the hospitality.
Finally, the author would like to thank Joshua Jackson for many useful discussions and bringing to my attention a possible generalisation of Proposition 5.9.


\section{Non-reductive geometric invariant theory}

In this section we introduce non-reductive GIT and present some recent results due to B\'erczi, Doran, Hawes and Kirwan \cites{DBHK_proj_compl,BDHK_gr_uni}. We call these results the $\hat{U}$-Theorems and they will be the main tools used in constructing the moduli spaces of hypersurfaces in complete simplicial toric varieties.

The method adopted in \cites{BDHK_gr_uni, DBHK_proj_compl} requires additional structure on the algebraic groups and the linearisations chosen. With this additional structure, many of  the properties of reductive GIT can be recovered. In this section we now introduce and explore this additional structure and state the resulting theorems. It must be noted that we introduce definitions and state theorems in only as much generality as is required. The definitions and results hold in greater generality than is stated and we refer the reader to \cites{DBHK_proj_compl,BDHK_gr_uni} for statements in full generality.

Let $X$ be a projective variety acted on by a linear algebraic group $G$ with respect to a very ample linearisation $\cL$, where $G$ is not necessarily reductive. Let $G \simeq R \ltimes U $ be the Levi decomposition.

\begin{defn}
We define the morphism of schemes associated to the inclusion of graded rings 
\[A(X,\cL)^G \sset A(X,\cL)\]
to be the \em enveloping quotient \em 
\[ q_G : X -\!\rightarrow  X \sslash_{\!\cL}  G,\]
where $X \sslash_{\!\cL}  G = \proj A(X,\cL)^G$ is a scheme, not necessarily of finite type.
\end{defn}

We define notions of semistability and stability for linear algebraic group actions. One motivation of this definition is to have a quotient locally of finite type. Due to the presence of a global stabiliser for the actions we will be considering, we need our definition of stability to allow for a positive dimensional stabiliser, so we adopt a variant of the definition given in \cites{DBHK_proj_compl,BDHK_gr_uni}, in analogy to the construction of the moduli space of quiver representations \cite{king}. We do this as it is easier to work a modified definition of stability rather than with than the group resulting from quotienting out by this global stabiliser.

\bd\label{def_nrgit_stab}
Let $X$ be a projective variety acted on by a linear algebraic group $G$ with respect to a very ample linearisation $\cL$. Suppose that $D \sset G$ is a torus acting trivially on $X$.
We define
\[I^{\fingen} = \{ \sig \in A(X,\cL)^G_+ \, \, | \, \, \cO(X_\sig) \text{ is finitely generated}\}\]
and the \em finitely generated semistable locus \em to be
\[X^{\ss} = \bigcup_{\sig \in I^{\fingen}} X_\sig.\]
Further, we define $I^s \sset I^{\fingen}$ to be $G$-invariant sections satisfying the following conditions:
\begin{itemize}
\item the action of $G$ on $X_f$ is closed and for every $x \in X_f$ we have  $D \sset \stab_G(x)$ with finite index; and
\item the restriction of the $U$-enveloping quotient map
\[q_U : X_\sig \longrightarrow \spec(\cO(X)_{(\sig)}^U)\]
is a principal $U$-bundle for the action of $U$ on $X_\sig$.
\end{itemize}
Then we define 
\[X^{\s} = \bigcup_{\sig \in I^{\s}}X_\sig\]
to be the stable locus.
\ed

\begin{notation}
When there is a possibility of confusion, we write $X^{\s,G}$ and $X^{\ss,G}$ for $X^{\s}$ and $X^{\ss}$ respectively when we want to emphasise the group.
\end{notation}

\br
When the torus $D \sset G$ is trivial, Definition \ref{def_nrgit_stab} coincides with the original definition of \cite{BDHK_gr_uni}. Alternatively, if the group $G$ is reductive, so that $U = \{e\}$, the definition agrees with \cite{king}.
\er

The following lemma details how we may study the quotient of an action of $G$ on $X$ in two stages. If we first deal with the action of $U$ on $X$, then we may consider the action of $R$ on $X / U$, provided it exists, using reductive GIT.

\bl\cite[Lemma 3.3.1]{monster}\label{lemma_quotients_in_stages}
Suppose $G$ is a linear algebraic group, N is a normal subgroup of $G$ and $X$ is a scheme with a $G$-action. Suppose all the stabilisers for the restricted action of $N$ on $X$ are finite and this action has a geometric quotient $\pi : X \to X/N$. Note that $G/N$ acts canonically on $X/N$. Then the following statements hold.

\begin{enumerate}
\item For all the $G/N$-orbits in $X/N$ to be closed, it is necessary and sufficient that all the $G$-orbits in $X$ are closed;
\item given $y \in X/N$, the stabiliser $\stab_{G/N}(y)$ is finite if and only if $\stab_G(x)$ is finite for some (and hence all) $x \in \pi\inv(y)$; and
\item if $G/N$ is reductive and $X/N$ is affine, then $X/N$ has a geometric $G/N$-quotient if and only if all $G$-orbits in $X$ are closed.
\end{enumerate}
\el

Using Lemma \ref{lemma_quotients_in_stages}, we can construct a geometric quotient of the stable locus as defined in Definition \ref{def_nrgit_stab}.

\begin{theorem}\cite[Theorem 3.4.2]{monster}\label{thm_monster_nrgit}
Let $X$ be a projective variety and $G$ a linear algebraic group acting on $X$ with respect to a very ample line bundle. There is a commutative diagram

\[
     \begin{tikzcd}
		X^{\s} \arrow[r] \arrow[d] & X^{\ss} \arrow[r] \arrow[d] & X \arrow[ld]\\
		X^{\s}/G \arrow[r] & X^{\ss} \sslash G
      \end{tikzcd}
    \]

where the first arrow is a geometric quotient and all inclusions are open.
\end{theorem}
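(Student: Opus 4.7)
The plan is to prove the theorem in four stages, working locally on the affine charts $X_\sig$ for $\sig \in I^{\s}$ and then gluing, with the two-stage structure of $G = R \ltimes U$ dictating the local argument.

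First, I would handle the openness of the inclusions and of the enveloping map. Since $X^{\s}$ and $X^{\ss}$ are defined as unions of the principal open affines $X_\sig$, both are open in $X$, so the inclusions $X^{\s} \sset X^{\ss} \sset X$ are open. The rational map $q_G$ restricts to a genuine morphism on $X^{\ss}$ because each $\sig \in I^{\fingen}$ produces an affine chart on which $q_G$ is regular.

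Next, I would construct a geometric $G$-quotient locally on each $X_\sig$ with $\sig \in I^{\s}$. The definition of $I^{\s}$ provides two ingredients: $q_U|_{X_\sig}$ is a principal $U$-bundle (hence a geometric $U$-quotient onto the affine scheme $\spec \cO(X)_{(\sig)}^U$, with trivial $U$-stabilisers), and the $G$-action on $X_\sig$ is closed with all stabilisers containing the trivially-acting torus $D$ to finite index. Since $D$ is reductive it injects into the Levi $R = G/U$ and continues to act trivially on $X_\sig / U$. I would then apply Lemma \ref{lemma_quotients_in_stages} with $N = U$: its hypotheses (finite $U$-stabilisers and existence of a geometric $U$-quotient) are immediate; part (1) converts closedness of $G$-orbits into closedness of $R$-orbits on the affine scheme $X_\sig / U$; part (3) then produces a geometric $R$-quotient $(X_\sig/U)/R \cong \spec \cO(X)_{(\sig)}^G$. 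Composing yields a geometric $G$-quotient of $X_\sig$, in which stabilisers coincide with $D$-cosets.

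For the gluing, uniqueness of geometric quotients forces the local constructions on overlaps $X_{\sig_1} \cap X_{\sig_2} = X_{\sig_1 \sig_2}$ to agree canonically, so the charts $\spec \cO(X)_{(\sig)}^G$ glue to a variety $X^{\s}/G$ equipped with the desired geometric-quotient map from $X^{\s}$. Since $I^{\s} \sset I^{\fingen}$, these affine charts appear as affine open subschemes of the enveloping quotient $X \sslash_{\!\cL} G = \proj A(X,\cL)^G$, which gives the open inclusion $X^{\s}/G \hookrightarrow X \sslash_{\!\cL} G$ and commutativity of the diagram.

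The main obstacle is the third stage: one must verify that the rather stringent conditions defining $I^{\s}$ (closed $G$-orbits together with the principal-$U$-bundle condition) are compatible with localisation at products $\sig_1\sig_2$, so that the local quotients glue separatedly and the resulting variety embeds as an \emph{open} subscheme of the enveloping quotient rather than merely mapping to it. Once this compatibility is established, the remaining content of the theorem — openness, commutativity, and the geometric-quotient property on each chart — is a formal consequence of the two-stage lemma and the definitions.
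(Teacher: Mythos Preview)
The paper does not give its own proof of this theorem: it is quoted verbatim from \cite[Theorem 3.4.2]{monster} and is used as a black box, so there is nothing in the paper to compare your argument against. Your outline is nonetheless the natural one and matches the strategy implicit in the surrounding text, namely to use Lemma \ref{lemma_quotients_in_stages} to pass from a principal $U$-quotient on each $X_\sig$ to a geometric $R$-quotient of the affine scheme $X_\sig/U$, and then glue.

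One point worth tightening: for the gluing you do not actually need $\sig_1\sig_2 \in I^{\s}$. Once you have a geometric $G$-quotient on $X_{\sig_1}$, its restriction to the $G$-invariant open subset $X_{\sig_1\sig_2}$ is already a geometric quotient, and uniqueness of geometric quotients then forces agreement on overlaps. So the ``main obstacle'' you flag is less severe than you suggest. The genuine point to check is that the image of $X^{\s}$ in $X\sslash_{\!\cL}G$ is open; this follows because for $\sig \in I^{\s}$ the affine $\spec \cO(X)_{(\sig)}^G$ is by construction the basic open $D_+(\sig)$ of $\proj A(X,\cL)^G$, and the geometric quotient $X_\sig \to X_\sig/G$ is surjective onto that chart (closed orbits plus finite stabilisers modulo $D$ force every point of the reductive affine quotient $X_\sig/U \to (X_\sig/U)/R$ to be hit).
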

The question remains of how one can compute the stable and semistable locus. The following discussion aims to address this.

Let $U$ be a unipotent group and $\lambda : \GG_m \to \aut(U)$ be a 1-parameter subgroup of automorphisms and let 
\[\hat{U}_{\lambda} = \GG_m \ltimes_{\lambda} U\]
be the semi-direct product, where multiplication is given as follows:
\[(u_1,t_1) \cdot (u_2,t_2) = (\lambda({t_2\inv})(u_1) + u_2, t_1t_2), \quad u_i \in U, \, \, t_i \in \GG_m.\]
The pointwise derivation of $\lambda$ defines a $\GG_m$-action on $\lie U$. This action defines a grading $\lie U = \bigoplus_{i \in \ZZ}(\lie U)_{i}$ with respect to weights $i \in \ZZ = \Hom(\GG_m,\GG_m)$.

\bd\label{grd_uni_rad_def}
We say that $\hat{U}_{\lambda}$ is \em positively graded \em if the induced action of $\GG_m$ on $\lie U$ has all positive weights. That is $(\lie U)_{i} \neq 0$ implies that $i>0$. 

Let $G \simeq R \ltimes U$ be a linear algebraic group. We say that $G$ has a \em graded unipotent radical \em if there exists a central 1-parameter subgroup $\lambda_g : \GG_m  \to R$ such that $\eta : \GG_m \to \aut(U)$ defined by 
\[\eta (u) =  \lambda_g(t) \cdot u \cdot \lambda_g(t)\inv \quad \quad \text{ for } t \in \GG_m, \,\, u \in U,\]
is such that $\hat{U}_{\lambda_g}:=\hat{U}_{\eta}$ is positively graded. We often drop the grading 1-parameter subgroup from the subscript and write $\hat{U}_{\lambda_g} = \hat{U}$. Note that $\lambda_g(t)$ is an automorphism of $U$ since $U$ is a normal subgroup of $G$.
\ed

Let $X$ be a projective variety and $\cL \in \Pic(X)$ be a very ample line bundle. Suppose that $\hat{U}$ acts on $X$ with respect to $\cL$. By restricting the $\hat{U}$-action to $\GG_m$, we have a $\GG_m$-action on $V = H^0(X,\cL)^\vee$; let
\[\omega_{\min} = \text{ minimial weight in } \ZZ \text{ for the } \GG_m \text{-action on }V\]
and
\[V_{\min} = \{v \in V \, \, | \, \, t \cdot v = t^{\omega_{\min}}v \text{ for all } t \in \GG_m\}\]
the associated weight space. Then $\PP(V_{\min})$ is a linear subspace of $\PP(V)$.

\bd\label{Zmin_Xmin0_def}
Suppose that $X, \cL$ and $\hat{U}$ are as above. We define 
\[Z_{\min} = X \cap \PP(V_{\min}) \]
and 
\[X_{\min}^0 = \{ x \in X \, \, | \, \, \lim_{t \to 0} t \cdot x \in Z_{\min}\} \quad \text{ where } t \in \GG_m \sset \hat{U}.\]
\ed

\br 
The subvarieties $\Zm$ and $\Xzm$ are unaffected by replacing the linearisation $\cL$ by any element of the positive $\QQ$-ray defined by $\cL$ in $\Pic^{\hat{U}}( X )\otimes_\ZZ \QQ$. Also note that $\Xzm$ and the $U$-sweep $U \cdot \Zm$ of $\Zm$ are $\hat{U}$-invariant subsets.
\er

We may twist the linearisation $\cL$ by a character $\chi : \hat{U} \to \GG_m$. We denote the twisted linearisation by $\cL^\chi$ and the minimum $\GG_m$-weight of $V = H^0(X,\cL^\chi)^\vee = H^0(X,\cL)^\vee$ by $\omega^\chi_{\min}$.

\bd\label{def_ep_chara}
We say that a linearisation $\cL \in \Pic^{\hat{U}}(X)$ is \em adapted \em if we have the following inequality 
\[\omega_{\min} < 0 < \omega_{\min +1},\]
where $\omega_{\min}$ is the lowest $\GG_m$-weight and $\omega_{\min +1}$ is the second smallest weight.
\ed
\br\label{rmk_find_well_adapt_lin}
Fix $\cL \in \Pic^{\hat{U}}(X)$ a linearisation and note that every character of $\hat{U} = \GG_m \ltimes U$ is of the form
\[\hat{U} \to \GG_m \, ; \quad (t,u)\mapsto t^r,\]
for some $r\in \ZZ$. We identify the characters of $\hat{U}$ with $\ZZ$. Let $\epsilon >0$ be a rational number and consider the rational character $\chi = -\omega_{\min} - \frac{\epsilon}{2}$. Twist $\cL$ by the character $\chi$ and denote this linearisation $\cL^{\chi} \in\Pic^{\hat{U}}(X)_{\QQ}$. Then for $\epsilon > 0$ small enough, $\cL^{\chi}$ is adapted: indeed, we have that
\[\omega_{\min}^{\chi} = \omega_{\min} + \chi = -\frac{\epsilon}{2} < 0 < \omega_{\min}^{\chi}+\epsilon.\]
\er

\br\label{rmk_defn_well_adpated}
For the proofs of finite generation of invariants given in \cites{BDHK_gr_uni,DBHK_proj_compl} to work, we must twist the linearisation by a rational character so that it is not merely adapted but so that the weight $\omega_{\min}$ is within some sufficiently small $\epsilon>0$ of the origin. That is, $\omega_{\min} < 0 < \omega_{\min} + \epsilon$. We say that a linearisation is \em well-adapted \em when this inequality holds for a sufficiently small $\epsilon >0$. By Remark \ref{rmk_find_well_adapt_lin}, so long as the linearisation is adapted, it is always possible to twist the linearisation to make it well-adapted. We refer the reader to \cites{BDHK_gr_uni,variationNRGIT} for a more in depth discussion.
\er

Before we state the $\hat{U}$-theorem, there is a technical condition which we require.
\bd \label{ss_equals_s}
The $G$-action is said to satisfy the \em semistability equals stability \em condition if 
\begin{align}\tag{$\fC$}
\stab_U(z) = \{e\} \text{ for every } z \in \Zm^{\ss,R} \enspace ,
\end{align}
where $\Zm^{\ss,R}$ is the semistable locus for the induced linear action of $R$ on $\Zm$.
\ed

Note that the condition $(\fC)$ as it appears here is the generalised condition \cite[Remark 2.7]{DBHK_proj_compl}.

\begin{theorem}\cite[Theorem 2]{BDHK_gr_uni}\label{thm_Uhat_thm}
Let $X$ be a projective variety acted on by a graded unipotent group $\hat{U}$ with respect to a very ample linearisation $\cL$. Suppose that the action satisfies the condition $(\fC)$. Then the following statements hold.
\begin{enumerate}
\item The restriction to $\Xzm$ of the enveloping quotient for the $U$-action 
\[q_U : \Xzm \longrightarrow \Xzm / \, U\]
is a principal $U$-bundle, in particular, $q_{U}$ is a geometric quotient.
\end{enumerate}
Suppose furthermore that $\Xzm \neq U \cdot \Zm$ and that the linearisation $\cL$ is well-adapted, then the following statements hold.

\begin{enumerate}

\setcounter{enumi}{1}

\item There are equalities $\Xzm -  U \cdot \Zm = X^{\s,\hat{U}}(\cL)$.
\item The enveloping quotient $X \sslash_{\cL}  \hat{U}$ is a projective variety and 
\[q_{\hat{U}} : X^{\s , \hat{U}}(\cL) \longrightarrow X \sslash_{\cL} \hat{U}\]
is a geometric quotient for the $\hat{U}$-action. In particular, the ring of $\hat{U}$-invariants is finitely generated.
\end{enumerate}
\end{theorem}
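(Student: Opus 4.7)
The plan is to exploit the Bialynicki-Birula retraction $p : \Xzm \to \Zm$, $x \mapsto \lim_{t \to 0} t \cdot x$, together with the positive grading of $\hat{U}$. Because $\GG_m$ normalises $U$ with all positive weights on $\lie U$, conjugation by $\lambda_g(t)$ contracts $U$ to $\{e\}$ as $t \to 0$. Consequently, for $x \in \Xzm$ and $u \in \stab_U(x)$, the identity $u \cdot x = x$ passes to the limit as $\lambda_g(t) u \lambda_g(t)\inv \cdot p(x) = p(x)$, so $u$ must also stabilise $p(x) \in \Zm$; condition $(\fC^*)$ then forces $u = e$. Thus the $U$-action on $\Xzm$ is set-theoretically free, and this freeness together with the structure of $\Xzm$ as a Bialynicki-Birula cell is the common backbone for all three parts.

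For part (1), the plan is to upgrade set-theoretic freeness to a principal $U$-bundle. Since the positive grading gives $(\lie U)_0 = 0$, the $U$-orbit directions in $\Xzm$ are transverse to $\Zm$. Picking an \'etale-local section of the free $U$-action on $\Zm$ (which exists by $(\fC^*)$ and standard descent for free $U$-actions on quasi-projective varieties) and translating by the flow of $\lambda_g$ produces \'etale-local trivialisations of $\Xzm \to \Xzm / U$, yielding the principal $U$-bundle structure of the restricted enveloping quotient $q_U$.

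For (2) and (3), the plan is to exploit the well-adapted twist: the inequality $\omega_{\min}^{\chi} < 0 < \omega_{\min}^{\chi} + \epsilon$ pins the Hilbert-Mumford weight-balance around the origin for the grading subgroup $\lambda_g$, identifying the stable locus for the $\hat{U}$-action with $\Xzm \setminus U \cdot \Zm$; points in $U \cdot \Zm$ fail the criterion because their $\hat{U}$-orbit closure meets $\Zm$. With (1) in hand, $\Xzm / U$ is a quasi-projective variety carrying an induced $\GG_m$-action, and classical reductive GIT produces the projective quotient $X \sslash_{\cL} \hat{U} = (\Xzm / U) \sslash \GG_m$ together with finite generation of invariants. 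Lemma \ref{lemma_quotients_in_stages} applied with $N = U$ then assembles the geometric quotient on the stable locus.

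The main obstacle is the finite generation of the $\hat{U}$-invariant ring implicit in (3): Nagata's counterexample shows this is not automatic for unipotent groups, so the argument must use the grading in an essential way, effectively as a substitute for a Reynolds operator. The leverage is that positivity of the weights lets $U$-invariants be produced fibrewise over $\Zm$, where $(\fC^*)$ makes $U$-invariants tractable, and then propagated across $\Xzm$ via the $\GG_m$-flow, with well-adaptedness controlling the behaviour at the boundary $U \cdot \Zm$. The hypothesis $\Xzm \neq U \cdot \Zm$ is what ensures the stable locus is nonempty, so that the resulting projective quotient has positive dimension.
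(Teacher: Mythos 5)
First, note that the paper does not prove this statement at all: Theorem \ref{thm_Uhat_thm} is quoted verbatim from \cite{BDHK_gr_uni} (their Theorem 2), so there is no internal proof to compare against, and any blind proof attempt is really an attempt to reprove that result from scratch.

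Your opening freeness argument has a genuine gap. From $u\cdot x = x$ you get $\bigl(\lambda_g(t)u\lambda_g(t)\inv\bigr)\cdot\bigl(\lambda_g(t)\cdot x\bigr) = \lambda_g(t)\cdot x$, and as $t\to 0$ the conjugates $\lambda_g(t)u\lambda_g(t)\inv$ converge to $e$ (that is exactly what the positive grading gives you), so the limit identity is $e\cdot p(x)=p(x)$, which is vacuous; it does \emph{not} follow that $u$ stabilises $p(x)$, and in general it will not. The conclusion ($\stab_U(x)=\{e\}$ for all $x\in\Xzm$) is true, but the correct route is different: in characteristic $0$ a unipotent group has no nontrivial finite subgroups, so $\{x\in\Xzm : \stab_U(x)\neq\{e\}\}=\{x : \dim\stab_U(x)\geq 1\}$ is closed by upper semicontinuity of stabiliser dimension, it is $\GG_m$-invariant because $\lambda_g(\GG_m)$ normalises $U$, and any nonempty closed $\GG_m$-invariant subset of $\Xzm$ meets $\Zm$ by taking limits under the flow; then $(\fC^*)$ finishes. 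The second, larger gap is the passage from set-theoretic freeness to the principal $U$-bundle in (1) and to finite generation in (3). For unipotent groups a free action need not admit a geometric quotient, let alone a locally trivial one (free $\GG_a$-actions on affine space already furnish counterexamples), so there is no ``standard descent for free $U$-actions'' to invoke; moreover $U$ does not act on $\Zm$ (it only sweeps it out inside $\Xzm$), so ``an \'etale-local section of the free $U$-action on $\Zm$'' is not meaningful, and $\Zm$ is not a slice since $U\cdot\Zm\subsetneq\Xzm$ in the cases of interest. The actual proof in \cite{BDHK_gr_uni} constructs the quotient and the invariants explicitly, by induction on $\dim U$ using the weight filtration of $\lie U$, and the well-adaptedness enters quantitatively there; your final paragraph correctly identifies this as the main obstacle but describes rather than supplies the argument, and the identification $X\sslash_{\cL}\hat{U}=(\Xzm/U)\sslash\GG_m$ (enveloping quotient of all of $X$ versus a reductive quotient of an open attracting set) is itself one of the things that has to be proved.
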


\br\label{rmk_triv_U_quot}
It follows from the proof of the $\hat{U}$-hat theorem that if $Z_{\min}$ is a point (so that $\dim V_{\min} = 1$), then $\Xzm = X_\sig$ for some non-zero section $\sig \in (V_{\min})^\vee$. Thus $\Xzm$ is an affine open subscheme of $X$. Moreover, when this is the case, the quotient $q_U : \Xzm \to \Xzm / U$ is a trivial $U$-bundle.
\er

We now state the result for general linear algebraic groups. Let $G \cong R \ltimes U$ be a linear algebraic group with unipotent radical $U \sset G$. Suppose that there exists a 1-parameter subgroup $\lambda_g:\GG_m \to R$ lying in the center of the Levi factor of G such that $\hat{U} = \lambda_g(\GG_m)\ltimes U$ is a graded unipotent group.
\bd\label{def_full_group_ep_chara}
A linearisation of the $G$-action is said to be \em (well-)adapted \em if its restriction to $\hat{U}$ is a (well-)adapted in the sense of Definition \ref{def_ep_chara}.
\ed

\begin{theorem}\cite[Theorem 0.1]{DBHK_proj_compl}\label{non_red_git_thm}
Let $G$ be a linear algebraic group acting on a projective variety $X$ with respect to $\cL$. Assume that $G$ has graded unipotent radical such that $(\fC)$ holds. Further, assume that $\cL$ is well-adapted. Then the following statements hold.
\begin{enumerate}
\item The $G$-invariants are finitely generated and the enveloping quotient 
\[X \sslash_{\cL} G = \proj A(X,\cL)^G\]
is a projective variety.

\item The inclusion $A(X,\cL)^G \sset A(X,\cL)$ induces a categorical quotient of the semistable locus
\[X^{\ss,G} \longrightarrow X \sslash_{\cL} G,\]
which restricts to a geometric quotient
\[X^{\s,G} \longrightarrow X^{\s,G} / G.\]
\end{enumerate}
\end{theorem}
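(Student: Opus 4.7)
The natural strategy is to carry out the quotient construction in two stages, exactly mirroring the Levi-type decomposition $G \simeq R \ltimes U$. First I would verify that the enlarged unipotent group $\hat{U} = \lambda_g(\GG_m) \ltimes U$ is a normal subgroup of $G$: indeed, $U$ is normal because it is the unipotent radical, and the grading $1$-parameter subgroup $\lambda_g$ is central in $R$ by assumption, so conjugation by any $r \in R$ fixes $\lambda_g(\GG_m)$ and preserves $U$, while conjugation by $u \in U$ sends $\lambda_g(t)$ into $\lambda_g(t) \cdot U \subset \hat{U}$. Hence the quotient $G/\hat{U} \cong R/\lambda_g(\GG_m)$ is a reductive group, and it acts on any $\hat{U}$-quotient of $X$.

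For part (1), I would apply Theorem \ref{thm_Uhat_thm} to the $\hat{U}$-action. Since the grading assumption, the $(\fC^*)$ condition and the well-adapted hypothesis have all been inherited from the $G$-linearisation by Definition \ref{def_full_group_ep_chara}, this theorem produces a projective enveloping quotient $X \sslash_{\cL} \hat{U} = \proj A(X,\cL)^{\hat{U}}$ with finitely generated ring of $\hat{U}$-invariants. Because $\hat{U}$ is normal in $G$, the reductive group $G/\hat{U}$ acts on $X \sslash_{\cL} \hat{U}$ with an induced linearisation, and classical reductive GIT then gives finite generation of $A(X,\cL)^{G} = (A(X,\cL)^{\hat{U}})^{G/\hat{U}}$ together with a projective GIT quotient
\[
X \sslash_{\cL} G \;=\; \bigl(X \sslash_{\cL} \hat{U}\bigr) \sslash\, (G/\hat{U}).
\]
This yields (1).

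For part (2), I would pass to the level of stable/semistable sections. A section $\sigma \in A(X,\cL)^G_+$ is $G$-invariant iff it is $\hat{U}$-invariant and $(G/\hat{U})$-invariant on $X \sslash_{\cL} \hat{U}$. The identification $X_\sigma$ on $X$ with its image inside $(X \sslash_{\cL} \hat{U})_{\bar\sigma}$ lets me match $I^{\fingen}$ and $I^{s}$ for the $G$-action (as in Definition \ref{def_nrgit_stab}) against the corresponding loci for the reductive action of $G/\hat{U}$ on $X \sslash_{\cL} \hat{U}$, after taking the $\hat{U}$-geometric quotient of the $\hat{U}$-stable locus provided by the $\hat{U}$-theorem. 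The categorical quotient property of $X^{\ss,G} \to X \sslash_{\cL} G$ then follows from the categoricity of the two successive quotients, and Lemma \ref{lemma_quotients_in_stages}, applied to the normal subgroup $\hat{U} \triangleleft G$ with geometric quotient $X^{s,\hat{U}} \to X^{s,\hat{U}}/\hat{U}$, upgrades the reductive GIT quotient on the $(G/\hat{U})$-stable locus of $X \sslash_{\cL} \hat{U}$ to a geometric quotient of $X^{s,G}$ by $G$.

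The main obstacle I anticipate is bookkeeping rather than conceptual: one has to check carefully that the definitions of stability in Definition \ref{def_nrgit_stab} (in particular the condition that $q_U$ restricts to a principal $U$-bundle on each $X_\sigma$, and the allowance for the global torus $D$) are correctly transported across the two stages, so that the composite enveloping quotient map really does restrict to a geometric quotient on the precise locus $X^{s,G}$ defined by $I^{s}$. This amounts to combining the principal $U$-bundle statement (1) of Theorem \ref{thm_Uhat_thm} with the standard characterisation of reductive-GIT stable points via the existence of invariant sections with closed orbits and finite stabilisers, and then invoking Lemma \ref{lemma_quotients_in_stages} to glue them into a single geometric quotient by $G$.
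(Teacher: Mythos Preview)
The paper does not supply its own proof of this theorem: it is quoted verbatim from \cite[Theorem~0.1]{DBHK_proj_compl} and used as a black box, so there is no in-paper argument to compare against. Your two-stage strategy---first take the $\hat{U}$-quotient via Theorem~\ref{thm_Uhat_thm}, then apply reductive GIT for the residual reductive group $G/\hat{U} \cong R/\lambda_g(\GG_m)$, gluing the two steps with Lemma~\ref{lemma_quotients_in_stages}---is exactly the approach taken in the cited reference, and your outline is correct.

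One small point worth tightening: when you invoke Lemma~\ref{lemma_quotients_in_stages} with $N = \hat{U}$, that lemma requires finite $\hat{U}$-stabilisers on the relevant locus. You should note explicitly that on $X^{\s,\hat{U}} = X^0_{\min} \setminus U\cdot Z_{\min}$ the $U$-stabilisers are trivial (principal bundle statement in Theorem~\ref{thm_Uhat_thm}(1)) and the residual $\GG_m$-stabilisers are finite because the well-adapted twist places these points in the $\GG_m$-stable locus; together these give finite $\hat{U}$-stabilisers. With that addition your bookkeeping is complete.
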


We now state a Hilbert-Mumford criteron, whose proof is outlined in \cite{DBHK_proj_compl}.

\begin{theorem}\label{non_red_HM_crit}\cite[Theorem 2.6]{DBHK_proj_compl}
Keep the notation and assumptions as in Theorem \ref{non_red_git_thm}. The following Hilbert-Mumford criterion holds.
\[X^{(\s)\s , \, G} = \bigcap_{g \in G} gX^{(\s)\s, \, T},\]
where $T\sset G$ is a maximal torus of $G$ containing the grading $\GG_m$.
\end{theorem}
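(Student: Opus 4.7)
The plan is to reduce the non-reductive Hilbert--Mumford criterion to the classical reductive one by computing the $G$-quotient in stages via the $\hat{U}$-theorem. First, by Theorem \ref{non_red_git_thm} together with Lemma \ref{lemma_quotients_in_stages}, I would identify $X^{(\s)\s, G}$ with the preimage under the enveloping quotient $q_{\hat{U}}$ of the $R'$-(semi)stable locus on the projective variety $Y := X \sslash_{\cL} \hat{U}$, where $R' := R/\lambda_g(\GG_m)$ is the residual reductive quotient acting on $Y$. To this $R'$-action I would then apply the classical reductive Hilbert--Mumford criterion using the maximal torus $T' := T/\lambda_g(\GG_m) \subset R'$, obtaining $Y^{(\s)\s, R'} = \bigcap_{r' \in R'} r' Y^{(\s)\s, T'}$. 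Pulling back along $q_{\hat{U}}$ and lifting each coset $r' \in R'$ to an element $r \in R$, the $R$-equivariance of $q_{\hat{U}}$ yields
\[ X^{(\s)\s, G} \;=\; \bigcap_{r \in R} r \cdot q_{\hat{U}}^{-1}\bigl( Y^{(\s)\s, T'} \bigr). \]

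The remaining task is to show $q_{\hat{U}}^{-1}\bigl( Y^{(\s)\s, T'} \bigr) = \bigcap_{u \in U} u X^{(\s)\s, T}$. Applying Theorem \ref{non_red_git_thm} to the solvable subgroup $\hat{T} := T \ltimes U \subset G$, which again has graded unipotent radical $U$ but now with abelian reductive quotient $T'$, identifies the left-hand side with $X^{(\s)\s, \hat{T}}$. So this step is precisely the special case of the Hilbert--Mumford criterion in which the reductive quotient is an abelian torus; the classical HM criterion for $T$ itself involves no intersection, and the content is to compare $X^{(\s)\s, \hat{T}}$ directly with the $U$-translates of $X^{(\s)\s, T}$. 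I would do this by combining the explicit description $X^{\s, \hat{U}} = \Xzm \setminus U \cdot \Zm$ of Theorem \ref{thm_Uhat_thm} with the $T$-weight decomposition of $H^0(X, \cL)^\vee$ and a direct calculation of Hilbert--Mumford weights for 1-parameter subgroups of $T$ against the $U$-translates of a point.

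Once this identification is in hand, combining the two displays and using the Levi decomposition $G = R \cdot U$ immediately yields
\[ X^{(\s)\s, G} \;=\; \bigcap_{r \in R,\, u \in U} (ru) X^{(\s)\s, T} \;=\; \bigcap_{g \in G} g X^{(\s)\s, T}, \]
as required. The step I expect to be the main obstacle is the central identification of $X^{(\s)\s, \hat{T}}$ with the intersection of $U$-translates of $X^{(\s)\s, T}$: one must verify that the principal $U$-bundle condition of Definition \ref{def_nrgit_stab} transfers faithfully through the $\hat{U}$-quotient, and that the twists needed to keep the $\hat{T}$-linearisation well-adapted do not disturb the classical description of $T$-stability. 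The hypothesis $(\fC^*)$ is essential here since it guarantees trivial $U$-stabilisers on $\Zm$, ensuring the expected structure of the staged quotient.
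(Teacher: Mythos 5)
First, a point of reference: the paper does not prove this statement at all --- it is quoted as \cite[Theorem 2.6]{DBHK_proj_compl} with the remark that the proof is only ``outlined'' there, so there is no internal proof to compare yours against. Your overall strategy (quotient in stages through $q_{\hat{U}}$, apply the classical reductive Hilbert--Mumford criterion to the residual $R' = R/\lambda_g(\GG_m)$-action on $X \sslash_{\cL} \hat{U}$, and recombine via $G = R \cdot U$) is the right shape and is consistent with how the cited result is established. The final bookkeeping step $\bigcap_{r,u}(ru)X^{(\s)\s,T} = \bigcap_g gX^{(\s)\s,T}$ is fine.

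However, the argument as written has a genuine gap precisely where you locate the ``main obstacle,'' and as structured it is circular there. You reduce the Hilbert--Mumford criterion for $G$ to the identification $q_{\hat{U}}^{-1}\bigl(Y^{(\s)\s,T'}\bigr) = \bigcap_{u\in U} uX^{(\s)\s,T}$, and you propose to prove this by invoking the same theorem for the solvable group $\hat{T} = T\ltimes U$ --- but that is exactly an instance of the statement being proven, and the ``direct calculation of Hilbert--Mumford weights'' that would break the circle is never carried out. This is not a routine verification: one needs to show (using well-adaptedness) that $T$-stability of every $U$-translate forces $x \in \Xzm \setminus U\cdot\Zm = X^{\s,\hat{U}}$, and conversely that on $X^{\s,\hat{U}}$ the residual $T'$-weight polytope of $q_{\hat{U}}(x)$ controls the $T$-weight polytopes of all translates $ux$; that is the entire content of the theorem in the solvable case. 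Separately, your first step overstates what Lemma \ref{lemma_quotients_in_stages} delivers: that lemma concerns closed orbits and finite stabilisers for geometric quotients, and does not identify $X^{(\s)\s,G}$ with the preimage of $Y^{(\s)\s,R'}$. For that you need first the inclusion $X^{\ss,G} \sset X^{\s,\hat{U}}$ (so that $q_{\hat{U}}$ is even defined on the locus in question) and then a descent argument matching $G$-invariant sections of $\cL$ on $X$ with $R'$-invariant sections of the induced linearisation on $X\sslash_{\cL}\hat{U}$; neither is supplied. So the proposal is a correct roadmap but not yet a proof.
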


\subsection{Weight polytopes}\label{sec_s_global_stab}
Using Theorem \ref{non_red_HM_crit}, we can reduce the study of stability to the study of stability of a maximal torus.  The discrete-geometric version of the Hilbert-Mumford criterion for a torus described in \cite{mumford} must be adapted to work in the presence of global stabilisers. Suppose that a torus $T$ is acting on a projective space $X=\PP^n $ with respect to a very ample linearisation $\cO(1)$. Consider the canonical identification $X= \PP(V)$, where $V = H^0(X , \cO(1))^\vee$.  Suppose further that there exists a 1-parameter subgroup
\[\lama:\GG_m \longrightarrow T\]
such that $\lama(\GG_m) \sset \stab_T(x)$ for every $x \in X$. Consider the weight space decomposition
\[V = \bigoplus_{\chi \in X^*(T)} V_{\chi},\]
where $X^*(T) = \Hom(T,k^*)$ is the character group and $V_\chi = \{ v \in V \, \, | \, \, t\cdot v = \chi(t)v \,\, \forall t \in T \}$.  The 1-parameter subgroup $\lama$ defines a point in $W = X^*(T)\otimes_\ZZ \QQ$, denote this point by $\underline{a} \in W$. Define the quotient vector space $H_{\underline{a}} = W / \QQ \cdot \underline{a}$ and write $\overline{w} \in H_{\underline{a}}$ for the image of an element $w \in W$ in $H_{\underline{a}}$. 

Let $\overline{T} = T / \lama(\GG_m)$ and consider $x \in X$ and some $v \in V$ lying over $x$ and write $v = \sum v_\chi$. Note that we can equivalently construct $H_{\underline{a}} = X^*(\overline{T}) \otimes_\ZZ \QQ$. We define the $\overline{T}$-weight set of $x$ to be
\[\wt_{\overline{T}}(x) = \{\overline{\chi} \, \, | \, \, v_\chi \neq 0\} \sset H_{\underline{a}},\]
and the associated weight polytope to be the convex hull of these weights:
\[\conv_{\overline{T}}(x) = \conv(\chi \, \, | \, \, \chi \in \wt_T(x)) \sset H_{\underline{a}}.\]
We get the discrete-geometric Hilbert-Mumford Criterion for (semi)stability with respect to the torus in the presence of a global stabilising $\GG_m$.

\begin{theorem}[Reductive Hilbert-Mumford criterion]\label{thm_stab_HM_polytope}
Let $T$ be a torus acting on a projective scheme $X$ with linearisation $\cL$ such that there is a global stabiliser $\lama : \GG_m \to T$ acting trivially on $X$. Then
\begin{align*}
x \in X^{\ss,T}(\cL)  &\iff 0 \in \conv_{\overline{T}}(x) , \\
x \in X^{\s,T}(\cL) &\iff 0 \in \conv_{\overline{T}}( x)^\circ,
\end{align*}
where $\conv_{\overline{T}}(x)^\circ$ is the interior of the polytope.
\end{theorem}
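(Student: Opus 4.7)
The plan is to reduce the statement to the classical Hilbert–Mumford criterion for torus actions applied to the honest quotient torus $\overline{T} = T/\lama(\GG_m)$. Since $\lama(\GG_m)$ acts trivially on $X = \PP(V)$, this subgroup must act on $V$ itself by scalar multiplication via a single character $t \mapsto t^{k}$; otherwise distinct weight spaces would scale differently, yielding a nontrivial action on $\PP(V)$. Equivalently, every $T$-weight $\chi$ appearing in $V$ satisfies $\langle \chi, \underline{a}\rangle = k$ under the natural pairing, so all weights lie on a common affine hyperplane in $W = X^*(T)\otimes_{\ZZ}\QQ$.

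Next, I would twist $\cL$ by a character of $T$ cancelling the weight $k$, producing a linearisation $\overline{\cL}$ that descends to an honest $\overline{T}$-linearisation. Because $\lama(\GG_m)$ acts trivially on $X$, this twist does not alter the semistable or stable loci: it merely rescales the fibres by a global scalar. After the twist, the weights of $T$ on $V$ all lie in the sublattice $X^*(\overline{T}) \otimes \QQ \subset W$ of characters trivial on $\lama(\GG_m)$; this codimension-one subspace projects isomorphically onto the quotient $H_{\underline{a}} = W/\QQ\cdot\underline{a}$, since the weight hyperplane is transverse to the line $\QQ\cdot\underline{a}$. Under this identification the $\overline{T}$-weight polytope of a point $x$ corresponds to $\conv_{\overline{T}}(x) \subset H_{\underline{a}}$.

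The classical Hilbert–Mumford criterion for the reductive quotient torus $\overline{T}$ acting on $X$ with linearisation $\overline{\cL}$, as in \cite{mumford}, then yields
\[x \in X^{\ss,\overline{T}}(\overline{\cL}) \iff 0 \in \conv_{\overline{T}}(x), \qquad x \in X^{\s,\overline{T}}(\overline{\cL}) \iff 0 \in \conv_{\overline{T}}(x)^\circ,\]
where the second statement invokes classical stability in the sense of closed orbit with finite $\overline{T}$-stabiliser. It remains to match this with the modified notion of $T$-(semi)stability from Definition \ref{def_nrgit_stab} with $D = \lama(\GG_m)$. For semistability the two notions coincide because, after the twist, the rings of $T$-invariants and $\overline{T}$-invariants on $X$ agree. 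For stability one uses the identity $\stab_{\overline{T}}(x) = \stab_T(x)/\lama(\GG_m)$, so finiteness of $\stab_{\overline{T}}(x)$ is equivalent to $\stab_T(x)$ containing $\lama(\GG_m)$ with finite index, which is precisely the condition imposed in Definition \ref{def_nrgit_stab}; closedness of orbits transfers verbatim under the quotient $T \to \overline{T}$. The main technical obstacle is carrying out this bookkeeping cleanly: verifying that the character twist is truly innocuous and that the identification $X^*(\overline{T})\otimes\QQ \cong H_{\underline{a}}$ is compatible with the weight polytopes on both sides. Once these identifications are in place the result is an immediate corollary of the classical criterion.
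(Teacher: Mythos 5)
The paper gives no proof of this theorem at all --- it is stated as a direct adaptation of the classical discrete-geometric criterion --- so your write-up has to stand on its own. Your overall strategy is the right one: all $T$-weights of $V$ lie on a common affine hyperplane because $\lama(\GG_m)$ acts by a single scalar character $t\mapsto t^k$, one twists to make the linearisation descend to $\overline{T}=T/\lama(\GG_m)$, applies the classical torus criterion there, and matches the modified notion of (semi)stability of Definition \ref{def_nrgit_stab} with the classical one via $\stab_{\overline{T}}(x)=\stab_T(x)/\lama(\GG_m)$ and the fact that $T$- and $\overline{T}$-orbits coincide as subsets of $X$. That last part of your argument is fine.

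The gap is in the twist itself, and it sits exactly where the content of the theorem lives. You twist by \emph{any} character $\chi_0$ with $\langle\chi_0,\underline{a}\rangle=k$ and then assert (i) that the twist does not alter the (semi)stable loci and (ii) that under the identification of $\underline{a}^\perp$ with $H_{\underline{a}}$ the twisted weight polytope is $\conv_{\overline{T}}(x)$. Neither holds for a general such $\chi_0$: the twisted weights are $\chi_i-\chi_0$, whose images in $H_{\underline{a}}$ form the translate $\conv_{\overline{T}}(x)-\overline{\chi_0}$, and two admissible choices of $\chi_0$ differ by a character of $\overline{T}$, which genuinely changes the $\overline{T}$-linearisation and hence the (semi)stable loci. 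Concretely, take $T=\GG_m^2$ acting on $\PP^1=\PP(V)$ with weights $(1,0)$ and $(0,1)$, so $\lama(t)=(t,t)$, $\underline{a}=(1,1)$, $k=1$. For a generic point $x$ one has $\conv_{\overline{T}}(x)=[-e,e]\ni 0$ in the interior, and $x$ is indeed stable; but twisting by $\chi_0=(1,0)$ gives twisted weights $(0,0)$ and $(-1,1)$, whose polytope in $H_{\underline{a}}$ is $[-2e,0]$, containing $0$ only on the boundary --- your argument would then declare $x$ strictly semistable. The fix is to take $\chi_0$ to be the rational character $\tfrac{k}{\langle\underline{a},\underline{a}\rangle}\underline{a}$ (replacing $\cL$ by a tensor power to clear denominators): this is the unique admissible twist with $\overline{\chi_0}=0$, it leaves every image $\overline{\chi_i}$, and hence $\conv_{\overline{T}}(x)$, unchanged, and it is the choice for which the modified semistable locus of Definition \ref{def_nrgit_stab} (note that for $k\neq 0$ the untwisted $\cL$ has \emph{no} nonzero $T$-invariant sections, so some twist is forced) agrees with classical $\overline{T}$-semistability. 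With that one correction your reduction to the classical criterion closes up.
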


Combining this with Theorem \ref{non_red_HM_crit}, we have a non-reductive Hilbert-Mumford criterion.

\begin{theorem}[Non-reductive Hilbert-Mumford criterion]\label{thm_nrgit_hm_2}
Let $G$ be a linear algebraic group acting on a projective variety $X$ with respect to $\cL$. Assume that $G$ has graded unipotent radical such that $(\fC)$ holds. The following Hilbert-Mumford criterion holds.
\begin{align*}
x \in X^{\ss,G} & \iff 0 \in \conv_T(g \cdot x) \text{ for every } g \in G, \\
x \in X^{\s,G} & \iff 0 \in \conv_T(g \cdot x)^\circ \text{ for every } g \in G. \\
\end{align*}
\end{theorem}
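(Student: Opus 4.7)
The plan is to chain the reduction of Theorem \ref{non_red_HM_crit}, which expresses $G$-(semi)stability as an intersection of $T$-translates of the $T$-(semi)stability locus, with the discrete-geometric criterion for torus actions given in Theorem \ref{thm_stab_HM_polytope}, which interprets $T$-(semi)stability of a single point in terms of the position of the origin relative to its weight polytope. The combination is essentially a formal manipulation; no genuinely new ideas are required.

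First, I would unpack Theorem \ref{non_red_HM_crit}. The equality
\[X^{(\s)\s,\, G} \;=\; \bigcap_{g \in G} g \cdot X^{(\s)\s,\, T}\]
is equivalent to: $x \in X^{(\s)\s,\, G}$ if and only if $g^{-1} \cdot x \in X^{(\s)\s,\, T}$ for every $g \in G$. Since $g \mapsto g^{-1}$ is a bijection of $G$, this is in turn equivalent to requiring that $g \cdot x \in X^{(\s)\s,\, T}$ for every $g \in G$.

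Second, I would apply Theorem \ref{thm_stab_HM_polytope} to the point $g \cdot x$ for each fixed $g \in G$, translating each torus (semi)stability condition into the weight-polytope condition
\[0 \in \conv_{T}(g \cdot x) \qquad \text{respectively} \qquad 0 \in \conv_{T}(g \cdot x)^{\circ}.\]
Quantifying over $g \in G$ and combining with the previous step yields both equivalences of the theorem simultaneously.

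The only subtle point, and the place where I would expect to spend the most care, is to verify that the global-stabiliser hypothesis of Theorem \ref{thm_stab_HM_polytope} is correctly matched to the present setting. In the non-reductive setup this role is played by the grading 1-parameter subgroup $\lambda_g : \GG_m \to R$ sitting in the center of the Levi factor, together with the trivially acting torus $D$ from Definition \ref{def_nrgit_stab}; by the hypothesis of Theorem \ref{non_red_HM_crit} both lie inside the maximal torus $T$, so they provide a stabilising 1-parameter subgroup $\lama$ in the sense of Section \ref{sec_s_global_stab}. Once this identification is made and the notation $\conv_T$ in the statement of the theorem is read as $\conv_{\overline{T}}$ with weights in $X^{*}(\overline{T}) \otimes_{\ZZ} \QQ$ as in Section \ref{sec_s_global_stab}, the two criteria compose to give precisely the claimed equivalences.
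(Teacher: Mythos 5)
Your proposal is correct and matches the paper's own (implicit) argument, which likewise just composes Theorem \ref{non_red_HM_crit} with the reductive polytope criterion of Theorem \ref{thm_stab_HM_polytope}; the reindexing $g \mapsto g^{-1}$ and the identification of the global stabiliser with $\lama$ (so that $\conv_T$ is read as $\conv_{\overline{T}}$) are exactly the points one needs to check, and you have handled them.
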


\bex\label{ex_wt_polytope_preshifting_wpp112}
We consider degree 4 curves in $\PP(1,1,2)$, such curves are parametrised by the projective space of weighted forms $\cY = \PP(k[x,y,z]_4)$, where $\deg x = \deg y = 1$ and $\deg z =2$. Then $\GL_2$ acts on $(x,y)$ via matrix multiplication and $\GG_m$ acts on $z$ via multiplication. This defines an action of $G =\GL_2 \times \GG_m$ on $\cY$. Consider the maximal torus of $G$ defined by
\[T = \left\{ \left( \diag(t_1,t_2), s \right) \in \GL_2 \times \GG_m \, \, | \, \, t_1,t_2,s \in k^* \right\}.\]
Consider the restricted action of $T$ on $\cY$. Then for a general monomial $x^iy^jz^k \in k[x,y,z]_4 = V$ with $i+j+2k = 4$, we have that 
\[(t_1,t_2,s) \cdot x^i y^j z^k = t_1^i t_2^j s^k x^i y^j z^k.\]
Denote such a weight by $(i,j,k)\in X^*(T) \cong \ZZ^3$. Note that by collecting all possible weights as columns in a matrix, one gets the following matrix
\[A = \begin{pmatrix}
4&3&2&1&0&2&1&0&0\\
0&1&2&3&4&0&1&2&0\\
0&0&0&0&0&1&1&1&2
\end{pmatrix}, \]
which is the matrix from Example \ref{ex_112_A}.

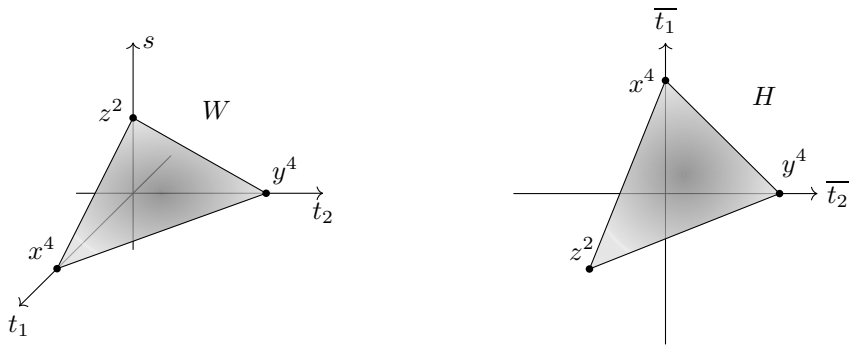
\begin{figure}[h]
\centering
\begin{tikzpicture}

\draw  [->] (0.5,0.5)-- (-1.5,-1.5);
\draw  [->] (0,-0.75) -- (0,2);
\draw [->] (-0.75,0) -- (2.5,0);

\fill[shading=radial, nearly transparent] (-1,-1) to (1.75,0)  to (0,1);

\node [below] at (-1.5,-1.5) {$t_1$};
\node [above] at (-1.2,-1) {$x^4$};
\node [circle, fill, inner sep=1.0pt] at (-1,-1) {};
\node [right] at (0,2) {$s$};
\node [left] at (0,1.1) {$z^2$};
\node [circle, fill, inner sep=1.0pt] at (0,1) {};
\node [below] at (2.5,0) {$t_2$};
\node [above] at (2,0) {$y^4$};
\node [circle, fill, inner sep=1.0pt] at (1.75,0) {};

\draw  [-] (-1,-1)-- (1.75,0);
\draw  [-]  (1.75,0) -- (0,1);
\draw [-] (0,1) -- (-1,-1);
\node at (1.1,1.1) {$W$};

\end{tikzpicture}
\hspace{2cm}
\begin{tikzpicture}

\draw  [->] (0,-2) -- (0,2);
\draw [->] (-2,0) -- (2,0);
\fill[shading=radial, nearly transparent] (-1,-1) to (1.5,0)  to (0,1.5);

\node [above] at (-1.1,-1) {$z^2$};
\node [circle, fill, inner sep=1.0pt] at (-1,-1) {};

\node [left] at (0,1.5) {$x^4$};
\node [circle, fill, inner sep=1.0pt] at (0,1.5) {};

\node [above] at (1.7,0.1) {$y^4$};
\node [circle, fill, inner sep=1.0pt] at (1.5,0) {};

\draw  [-] (-1,-1)-- (1.5,0);
\draw  [-]  (1.5,0) -- (0,1.5);
\draw [-] (0,1.5) -- (-1,-1);

\node [above] at (0,2) {$\overline{t_1}$};
\node [right] at (2,0) {$\overline{t_2}$};

\node at (1.3,1.3) {$H$};

\end{tikzpicture}
\caption{$P_4$, the section polytope of $\cO(4)$ in $W$ and $H$.}\label{fig_sec_poly_W_and_H}
\end{figure}

Define $P_4$ to be the \em section polytope \em to be the convex hull of all torus weights in either $W$ or $H = H_{\underline{a}}$ (see Figure \ref{fig_sec_poly_W_and_H}). The weight polytope of any element $[f] \in \cY_4$ will be a subpolytope of the section polytope. As shown in figure one, the section polytope considered in $W$ is not full-dimensional, and is a 2-simplex in $W \cong \QQ^3$. We will always consider the section polytope in $H$, where it is full dimensional.

Explicitly, if $e_1$ and $e_2$ are a basis for $H$, then $P_4 = \conv(4e_1,4e_2,-e_1-e_2)$.
\eex


\section{Automorphisms and toric varieties}
In this section, we study the automorphism groups of toric varieties and prove that they carry the extra structure required by NRGIT; that is, that they admit graded unipotent radicals. We first recall some aspects of toric geometry; specifically toric orbifolds and their quasismooth hypersurfaces.

\subsection{Toric orbifolds}
Let $X$ be a projective simplicial toric variety. These are precisely the projective toric varieties with at worst orbifold singularities and we refer to such varieties simplt as toric orbifolds.  The notion of quasismoothness was first introduced for subvariaties of weighted projective spaces by Dolgachev \cite{dolgachev_weighted} and then generalised to toric varieties by Cox and Batyrev \cite{cox_hodge}.

We recall the Cox ring and the quotient construction of a toric variety. We refer the reader to \cites{cox_hom,hausen_book} for details. 

\begin{defn}
Suppose that $X = \Xs$ is a toric variety associated to a fan $\Sig$. The 1-dimensional cones in $\Sig$ are called rays and the set of rays is denoted $\Sig(1)$. Let
\[S = k[x_{\rho} \, |  \, \rho \in \Sig(1)]\]
be the polynomial ring in $|\Sig(1)|$ variables. Every monomial $\prod x_{\rho}^{a_{\rho}} \in S$ defines an effective torus-invariant divisor $D = \sum a_{\rho} D_{\rho}$, we write this monomial as $x^D$. In this way, we define the following notion of degree:
\[\deg(x^D) = [D] \in \Cl(X).\]
Thus we have 
\[S = \bigoplus_{\alpha \in \Cl(X)}S_{\alpha},\]
where $S_\alpha = \{f \in S \, \, | \, \, \text{ all monomials of } f \text{ of degree } \alpha \}$. Then $S_{\alpha} \cdot S_{\beta} \subset S_{\alpha + \beta}$ and we define the \em Cox ring of $X$ \em to be $S$ with this grading.
\end{defn}

Now when $X$ is a projective orbifold, every Weil divisor is $\QQ$-Cartier and for every $\alpha \in \Cl(X)$ we denote the corresponding rank 1 reflexive sheaf by $\cO_X(\alpha)$. By  \cite[Proposition 1.1]{cox_hom}, the Cox ring of $X$ is the algebra of global sections of the rank 1 reflexive sheaves and denoted by 
\[S = \bigoplus_{\alpha \in \Cl(X)} H^0(X,\cO_X(\alpha)).\]

Since $X$ is toric, $\Cl(X)$ is a finitely generated abelian group. The grading of $S$ by $\Cl(X)$ defines an action of $\D = \Hom_\ZZ(\Cl(X),k^*)$ on $\AA^r = \spec S$ (for example, see \cite{craw}). Cox proved (\cite[Theorem 2.1]{cox_hom}, although for the way we have formulated the action the proof can be found in \cite[Theorem 2.12]{craw} that $X = \left( \AA^r \right)^{\s} / \D$ as the GIT-quotient of this action. We denote the the quotient morphism $q : (\AA^r)^{\s} \to X$.

\begin{defn}\label{def_qs}
Let $X$ be a toric orbifold and fix a class $\alpha \in \Cl(X)$. Let $Y \sset X$ be a hypersurface defined by $f \in S_{\alpha}$. We say that $Y$ is \em quasismooth \em if $q\inv(Y) \sset (\AA^{r})^{\s}$ is smooth. Equivalently, $Y$ is quasismooth if $\V(f) \sset \AA^r$ is smooth in $(\AA^r)^{\s}$.

Let $\cY_\alpha = \PP(S_\alpha)$. We define the \em quasismooth locus \em to be the open set
\[  \YQS_\alpha = \{Y \sset X\, \, | \, \, Y \text{ is a quasismooth hypersurface of class }\alpha \} \sset \cY_\alpha.\]
We denote its complement, \em the non-quasismooth locus, \em by
\[\YNQS_\alpha = \cY_\alpha - \YQS.\]
\end{defn}

\bex
Let $X = \PP(a_0, \dots , a_n)$ be a weighted projective space. Then $S = \bigoplus_{d\geq 0}k[x_0,\dots , x_n]_d$ where $\deg x_i = a_i$ and $q : \left(\AA^{n+1} - \{0\} \right) \to \PP(a_0,\dots ,a_n)$ is the quotient morphism for the $\GG_m$-action on $\AA^{n+1}$ defined by $t \cdot (x_0, \dots , x_n) = (t^{a_0}x_0,\dots , t^{a_n}x_n)$. Moreover, a hypersurface defined by $f \in k[x_0, \dots , x_n]_d$ is quasismooth if and only if $\left( \frac{\partial f}{\partial x_0}(\tilde{x}) , \dots , \frac{\partial f}{\partial x_n}(\tilde{x}) \right) \neq (0,\dots ,0)$ for every $\tilde{x} \in \AA^{n+1} - \{0\}$.
\eex

\begin{theorem}\cite[Theorem 8.1]{fletcher}\label{thm_flecther_qs_criterion}
The general hypersurface of degree $d$ in $\PP(a_0, \dots a_n)$ is quasismooth if and only if 
\begin{itemize}
\item[] either (1) there exists a variable $x_i$ of degree $d$,
\item[] or (2) for every non-empty subset $I = \{i_0, \dots , i_{k-1}\}$ of $\{0, \dots , n\}$, \\  either (a) there exists a monomial $x_I^M = x_{i_0}^{m_0} \cdots x_{i_{k-1}}^{m_{k-1}}$ of degree $d$ \\  or (b) for $\mu = 1, \dots , k$ there exists monomials
\[x_I^{M_\mu}x_{e_\mu} = x_{i_0}^{m_{0,\mu}} \cdots x_{i_{k-1}}^{m_{k-1,\mu}}x_{e_\mu}\]
of degree $d$, where $\{e_\mu\}$ are $k$ distinct elements.
\end{itemize}
\end{theorem}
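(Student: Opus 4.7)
The plan is to translate quasismoothness of $Y = \V(f)\sset\PP(a_0,\ldots,a_n)$ into smoothness of the affine cone $\V(f)\sset\AA^{n+1}$ off the origin, as in the preceding example, and to test smoothness one stratum at a time on the decomposition
\[\AA^{n+1}\setminus\{0\} = \bigsqcup_{\emptyset \neq I \sset \{0,\ldots,n\}} (\AA^I)^{*},\qquad (\AA^I)^{*} = \{\tilde{x} : \tilde{x}_i \neq 0 \Leftrightarrow i \in I\}.\]
Thus $Y$ is quasismooth iff for every non-empty $I$ no point of $(\AA^I)^{*}$ is a common zero of $f$ and of all $\partial f/\partial x_i$. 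A monomial-by-monomial inspection shows that $f|_{(\AA^I)^{*}}$ and $\partial f/\partial x_i|_{(\AA^I)^{*}}$ for $i \in I$ depend only on the $I$-monomials in $f$, while $\partial f/\partial x_j|_{(\AA^I)^{*}}$ for $j \notin I$ depends only on the degree-$d$ monomials of the form $x_j \cdot x_I^{M}$, evaluating there to the $I$-monomial $x_I^{M}$.

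Condition (1) is essentially automatic: if $a_j = d$ then, since all $a_i>0$, every degree-$d$ monomial other than $x_j$ has $m_j = 0$, so $f = c x_j + g(x_0,\ldots,\hat{x}_j,\ldots,x_n)$ with generic $c \neq 0$, giving $\partial f/\partial x_j \equiv c$ on all of $\AA^{n+1}$. For condition (2), fix a non-empty $I$ with $|I| = k$. Under (2a) the restriction $f|_{(\AA^I)^{*}}$ is a non-trivial weighted-homogeneous polynomial of degree $d$ on $\AA^I \cong \AA^k$, and by a weighted Bertini argument (or a direct dimension count combined with the Euler identity $d\cdot f = \sum a_i x_i\,\partial f/\partial x_i$) the common zero locus of its partials is just the origin for generic coefficients, hence misses $(\AA^I)^{*}$. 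Under (2b) the $k$ distinct indices $e_1,\ldots,e_k$ necessarily lie outside $I$ (otherwise the corresponding monomial is an $I$-monomial and we are in (2a)); the partials $\partial f/\partial x_{e_\mu}|_{(\AA^I)^{*}}$ are then generic linear combinations of $I$-monomials of degrees $d-a_{e_\mu}$, and together they define a generically submersive morphism $(\AA^I)^{*} \to \AA^k$ whose zero fibre is empty for generic $f$. In either case the stratum carries no critical point of $f$.

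For the necessity direction I argue by contrapositive. If neither (1) nor (2) holds, pick a non-empty $I$ with $|I| = k$ such that both (2a) and (2b) fail. Failure of (2a) forces $f|_{(\AA^I)^{*}} \equiv 0$ identically in $f$, whence also $\partial f/\partial x_i|_{(\AA^I)^{*}} \equiv 0$ for every $i \in I$. Failure of (2b) is a Hall-type statement: the set $E = \{e \notin I : \exists \text{ degree-}d \text{ monomial of the form } x_e\cdot x_I^{M}\}$ has $|E| \leq k-1$, since otherwise one could select $k$ distinct $e_\mu$'s. Hence on $(\AA^I)^{*}$ the only potentially nonzero partials are the $|E|$ functions $\partial f/\partial x_e|_{(\AA^I)^{*}}$ for $e \in E$, each a weighted-homogeneous polynomial of positive degree in the $k$ variables $\{x_i : i \in I\}$. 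By fibre dimension their common zero locus $W \sset \AA^I$ has $\dim W \geq k - |E| > 0$, is $\GG_m$-invariant and contains the origin. A further $\GG_m$-equivariance argument (shrinking $I$ to a minimal counterexample if necessary) exhibits a component of $W$ not contained in any coordinate hyperplane, producing a point of $W \cap (\AA^I)^{*}$; this point is a singular point of $\V(f)$ for \emph{every} degree-$d$ $f$, so no degree-$d$ hypersurface is quasismooth. The most delicate step is precisely this last one — ensuring that the vanishing locus actually meets the open stratum rather than lying on a deeper coordinate face — together with the Hall-type combinatorial rephrasing of the failure of (2b); the remaining work is essentially bookkeeping on the coordinate stratification and on which monomials in $f$ survive at each step.
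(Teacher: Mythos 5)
This statement is quoted from Fletcher and the paper gives no proof of it, so your proposal can only be measured against the standard argument. Your overall strategy --- stratifying the punctured affine cone by coordinate support and recording which monomials of $f$ control $f$ and each partial on each stratum --- is exactly that standard argument, and the sufficiency direction is essentially correct. The one thing you elide there is why, in case (2b), the common zero locus of the $k$ partials $\partial f/\partial x_{e_\mu}$ on the torus $(\AA^I)^*$ is \emph{empty} rather than merely finite: ``generically submersive'' only gives dimension $\leq 0$. To finish you must use that each restricted partial is weighted homogeneous of positive degree, so the common zero locus is invariant under the weighted $\GG_m$-action; this action preserves the torus and has one-dimensional orbits there, so a nonempty invariant subset of the torus has dimension $\geq 1$, a contradiction. (The same homogeneity, via the Euler identity, is what closes case (2a).)

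The genuine gap is the last step of your necessity argument, which you yourself flag as delicate: the claim that $W=\bigcap_{e\in E}\V(\partial f/\partial x_e|_{\AA^I})$ has a component not contained in any coordinate hyperplane is simply false in general. For instance, if $E=\{e\}$ and the only degree-$d$ monomial of the form $x_e x_I^M$ is $x_e x_{i_0}^{m_0}x_{i_1}^{m_1}$ with $m_0,m_1\geq 1$, then $W=\V(x_{i_0}^{m_0}x_{i_1}^{m_1})=\{x_{i_0}=0\}\cup\{x_{i_1}=0\}$ misses the torus entirely; and ``shrinking $I$'' does not help, because the hypothesis only gives failure of (2a) and (2b) for this particular $I$, not for its subsets. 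The step is, however, unnecessary. Since (2a) fails, $f$ has no $I$-monomials at all, so $f$, the partials $\partial f/\partial x_i$ for $i\in I$, and the partials $\partial f/\partial x_j$ for $j\notin I\cup E$ vanish identically on the \emph{whole} linear subspace $\AA^I$, not just on its open torus (a monomial surviving $\partial/\partial x_j$ at a point supported on $J\sset I$ has the form $x_jx_J^M$, hence already puts $j$ in $E$). Therefore \emph{every} point of $W$, on whatever coordinate face it lies, is a singular point of the affine cone, and Krull's height theorem gives $\dim W\geq k-|E|\geq 1$, so $W$ contains a nonzero point. That point witnesses non-quasismoothness for every $f$ of degree $d$, which is the (stronger) conclusion recorded in Remark \ref{rmk_fletcher_qs_criteion}. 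Replace the attempt to land in the open stratum with this observation and the proof is complete.
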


The following lemma will be needed for the proof of Theorem \ref{thm_main_NP_proof}. It is a more expicit version of Theorem \ref{thm_flecther_qs_criterion} and allows us to remove the general hypothesis. It follows from arguments used in the proof of Theorem \ref{thm_flecther_qs_criterion} in \cite{fletcher}.

\bl\label{lemma_fletcher_qs_crit}
Suppose that $X = \PP(a_0, \dots , a_n)$ be well formed and $d$ a Cartier degree and denote $d_i = \frac{d}{a_i}$. Consider an $f \in k[x_0, \dots , x_n]_d$ which is quasismooth. Then for every variable $x_i$, either $x_i^{d_i}$ is a monomial of $f$ or $x_i^{d_i - \frac{a_j}{a_i}}x_j$ is a monomial of $f$ for some $j \neq i$ where $a_i | a_j$.
\el

\br
Note that if for a fixed $a_i$, there exists no $a_j$ such that $a_i | a_j$, then we must have that $x_i^{d_i}$ is a monomial of $f$.
\er

\subsection{Graded automorphisms of the Cox ring}

The construction of the automorphism group of a complete simplicial toric variety $X$ is a generalisation of the construction of the automorphism group of projective space. The generalisation is to be seen as follows. The Cox ring of projective space with the grading of the class group is the standard homogeneous coordinate ring; that is, the polynomial ring with the usual $\ZZ$-grading given by the total degree. The group of graded automorphisms of this ring is $\GL(n+1)$ which fits into the following short exact sequence
\[0 \longra \GG_m \longra \GL(n+1) \longra \PGL(n+1) \longra 0,\]
where $ \PGL(n+1) = \aut(\PP^n)$. More generally, let $X$ be a complete toric variety associated to a fan $\Sig$. Let $S = k[\xr \, | \, \rho \in \Sig(1)]$ be the Cox ring of $X$. When we refer to the degree of an element of $S$, we mean the degree with respect to the class group and by total degree we mean the degree with respect to the usual $\ZZ$-grading of the polynomial ring. 
We obtain a short exact sequence
\[0 \longra \D \longra \autgr(S) \longra \aut^0(X) \longra 0.\]

\begin{theorem}\cite[Theorem 4.2]{cox__hom_erratum}\label{auto_group_toric_cox}
Let $X$ be a complete toric variety and let $S = \cox (X)$ be its Cox ring. Then the following statements hold.

\begin{enumerate}

\item The group of graded algebra automorphisms $\autgr(S)$ is a connected affine algebraic group of dimension $\sum_{i=1}^l |\Sig_i| \dim_k S_i$.
\item The unipotent radical $U$ of $\autgr(S)$ is of dimension $\sum_{i=1}^l |\Sig_i| (\dim_k S_i - |\Sig_i|)$.
\item We have the following isomorphism
\[ \autgr(S) \cong \prod_{i=1}^l \GL(S_i') \ltimes U.\]

\end{enumerate}
\end{theorem}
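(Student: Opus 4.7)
The plan is to identify $\autgr(S)$ as an open subvariety of an explicit affine space and then exhibit its Levi decomposition by factoring off a ``linear part'' morphism. Since $S = k[x_\rho : \rho \in \Sig(1)]$ is freely generated as a $k$-algebra, a graded algebra endomorphism $\phi$ is determined uniquely by the tuple $(\phi(x_\rho))_\rho$ with $\phi(x_\rho) \in S_{\deg(x_\rho)}$. I would partition the rays by class as $\Sig(1) = \coprod_{i=1}^l \Sig_i$ (rays in $\Sig_i$ all having class $\alpha_i$) and set $S_i = S_{\alpha_i}$ and $S_i' = \mathrm{span}_k\{x_\rho : \rho \in \Sig_i\} \subset S_i$. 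This presents $\Endgr(S)$ as the affine space $\prod_i S_i^{|\Sig_i|}$ of dimension $\sum_i |\Sig_i| \dim_k S_i$, so $\autgr(S)$ is an open subscheme of this affine space, hence a smooth affine algebraic group of the asserted dimension (item~(1) modulo connectedness, which comes below).

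The crux is the following claim: $\phi \in \Endgr(S)$ is invertible if and only if its \emph{linear part} $\mathrm{lin}(\phi) \in \prod_i \End(S_i')$, obtained by projecting $\phi(x_\rho)$ onto the total-degree-one component $S_i' \subset S_i$, lies in $\prod_i \GL(S_i')$. The ``only if'' direction is immediate. For the converse, I would use that each $S_i$ is finite-dimensional (by completeness of $X$) and carries a canonical decreasing filtration by total degree. Writing $\phi(x_\rho) = L_\rho + N_\rho$ with $L_\rho \in S_i'$ and $N_\rho$ of total degree $\geq 2$, the endomorphism $\phi$ preserves this filtration on each $S_i$, and the induced map on the total-degree-$k$ associated graded piece sends a monomial $x_{\sigma_1} \cdots x_{\sigma_k}$ to $L_{\sigma_1} \cdots L_{\sigma_k}$---a polynomial combination of invertible linear parts, hence itself invertible. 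Invertibility on each associated graded piece yields invertibility of $\phi|_{S_i}$ and hence of $\phi$. I expect this filtration/associated-graded argument to be the main technical step, since it requires tracking how $\phi$ mixes total-degree components within a single $\Cl(X)$-graded piece.

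Given the claim, the Levi structure falls out. The morphism $\mathrm{lin}\colon \autgr(S) \twoheadrightarrow \prod_i \GL(S_i')$ is a surjective group homomorphism split by the inclusion of linear automorphisms, so it has kernel $U$ consisting of those $\phi$ with $L_\rho = x_\rho$ and $N_\rho$ ranging freely over the total-degree-$\geq 2$ complement of $S_i'$ in $S_i$; this gives $\dim U = \sum_i |\Sig_i|(\dim_k S_i - |\Sig_i|)$ and the semidirect product $\autgr(S) \cong \prod_i \GL(S_i') \ltimes U$, proving item~(3) and the dimension count in item~(2). Ordering a monomial basis of $S$ so that total degree is non-decreasing, elements of $U$ act by upper triangular matrices with $1$'s on the diagonal, so $U$ is unipotent; being normal with $\prod_i \GL(S_i')$ reductive, this is precisely the Levi decomposition, identifying $U$ as the unipotent radical. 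Finally, connectedness of $\autgr(S)$ is inherited from the connectedness of both factors in the semidirect product, completing item~(1).
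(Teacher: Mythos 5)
Your proof is correct in substance, but be aware that the paper does not actually prove this theorem: it is quoted from Cox's erratum and the text explicitly defers there for the proof. What the paper does establish on its own (Lemma \ref{E2} and Proposition \ref{pos_grad}) is the matrix description $(\star)$ of $\Endgr(S)\cong\prod_i\Hom_k(S_i',S_i)$ as block lower-triangular matrices $\left(\begin{smallmatrix}A_i&0\\ B_i&C_i\end{smallmatrix}\right)$ with the $C_i$ determined polynomially by the $A_j,B_j$, together with the identification of $U$ as the locus $A_i=I$ with $C_i$ unitriangular. Your argument is a self-contained reconstruction built on exactly this skeleton: your morphism $\mathrm{lin}$ is the projection onto the blocks $A_i$, your crux claim (invertibility of $\phi$ is detected by invertibility of $\mathrm{lin}(\phi)$) is what identifies the unit group of the monoid as the principal open subset $\prod_i\det A_i\neq 0$ (whence affineness, connectedness and the dimension count), and your total-degree filtration is the same device that makes the $C_i$ triangular in Proposition \ref{pos_grad}. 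So the route is the standard one rather than a genuinely different one; the added value of your write-up is that it actually derives Cox's statement from this picture instead of citing it.

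Two steps should be tightened. First, you justify invertibility of the induced map on the total-degree-$k$ associated graded piece by calling it ``a polynomial combination of invertible linear parts, hence itself invertible''; as phrased this is not a reason. The correct justification is that this induced map is the restriction of $\sym^k\bigl(\bigoplus_i\mathrm{lin}(\phi)_i\bigr)$ to the class-$\alpha_i$ isotypic summand of $\sym^k\bigl(\bigoplus_i S_i'\bigr)$, a summand it preserves because each $\mathrm{lin}(\phi)_i$ preserves $S_i'$; and $\sym^k$ of an invertible map is invertible, so its restriction to a finite-dimensional invariant summand is too. Second, ``invertibility of $\phi|_{S_i}$ and hence of $\phi$'' needs the remark that $S$ is generated by $S_1,\dots,S_l$, so $\phi$ is surjective, and a surjective graded endomorphism with finite-dimensional graded pieces is injective --- this is where completeness of $X$ enters, via $\dim_k S_\alpha<\infty$ and $S_0=k$ (the latter is also what makes $\mathrm{lin}$ multiplicative, i.e.\ the $0$ block in $(\star)$). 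Neither point is a fatal gap, but both are where the actual content of the argument lives.
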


We refer the reader to \cite{cox__hom_erratum} for a proof.  We consider the group of graded automorphisms as a matrix group via the following lemma, whose proof is taken from the proof of the corrected version of \cite[Proposition 4.3]{cox__hom_erratum}. We include the proof since we will need the explicit matrix description of the automorphism group.


\bl\label{E2}

The endomorphism algebra of $S$ is a linear algebraic monoid with unit group $\autgr(S)$ and there is an inclusion of linear algebraic monoids
\begin{align*}
\Endgr(S) & \longrightarrow  \prod_{i=1}^l \End_k(S_i)\\
\phi & \longmapsto (\phi|_{S_i} : S_i \to S_i)_{i=1}^l.
\end{align*}
In particular, $\autgr(S)$ is a linear algebraic group.
\el

\begin{proof}
We show that the map
\[\Endgr(S) \longrightarrow \prod_{i=1}^l \End_k(S_i)\]
is a closed immersion and hence $\End_g(S)$ is an affine submonoid. Since $S$ is generated as an algebra by elements in $S_1,...,S_l$, an endomorphism is completely determined by the above restrictions and hence the map is injective. The fact that the map respects composition (and is well-defined) is immediate since we consider only graded endomorphisms. Thus $\Endgr(S)$ is a submonoid and it only remains to show that it is a closed subset; that is, cut out by polynomials.

To do this, we write down the corresponding collection of matrices with respect to the basis of each $S_i = S_i' \oplus S_i''$ given by monomials of degree $\alpha_i$:
\[
\phi \longleftrightarrow
\left( \left( \begin{array}{c|c}

A_i & 0 \\
\hline
B_i & C_i\\

\end{array} \right)\right)_{i=1}^l, \label{eq:E2_2} \tag{$\star$}
\]
where $B_i \in \Hom_k(S_i' , S_i'')$.  We shall often suppress the brackets in this notation.

The matrices $A_i$ and $B_i$ come from evaluating the single variables in $S_i'$. The $C_i$ come from evaluating monomials in $S_i''$ which are products of 2 or more variables in $S_j'$ with $j\neq i$ and hence $C_i$ is completely determined by $A_j$ and $B_j$ for $j\neq i$. We claim that the elements of $C_i$ are polynomials in elements of $A_j$ and $B_j$ for $j \neq i$.

Let us prove this claim.  Consider monomials $x^D,x^E \in S_i''$, where $D$ and $E$ are effective non-prime divisors with class $\alpha_i$. Both $x^D$ and $x^E$ are elements of the monomial basis of $S_i''$ so that for $\phi \in \Endgr(S)$
\[\phi(x^D) = \cdots + c_i^{DE}x^E + \cdots,\]
where $c_i^{DE}$ is the corresponding entry in $C_i$. Then $x^D = x_{\rho_1} \cdots x_{\rho_s}$ is a product of variables allowing duplications with $x_{\rho_i} \nin S_i'$. Thus
\[\phi(x_{\rho_1}) \, \cdots \,\phi(x_{\rho_s}) = \cdots +  c_i^{DE}x^E + \cdots.\]
But each $\phi(x_{\rho_k})$ is a linear combination of monomials with coefficients given by elements of $A_{j}$ and $B_{j}$ with $j \neq i$. Thus the elements of the $C_i$ are given by polynomials in the elements of $A_{j},B_{j}$ and we are done.

On the other hand, the $A_i$ and $B_i$ are chosen completely arbitrarily. In other words we have a bijection of sets
\begin{align*}
\Endgr(S) & \longleftrightarrow \prod_{i=1}^l \Hom_k(S_i',S_i) \\
\phi & \longleftrightarrow  \begin{pmatrix}
A_i\\
B_i
\end{pmatrix}.
\end{align*}
The $0$ in the top right hand corner of the matrices \eqref{eq:E2_2} comes from the fact that  
\[\phi(S_i'') \cap S_i' = 0\]
since $S_0 = k$, and monomials in $S_i''$ contain more than one variable. 

It remains to remark that $\autgr(S)$ is the group of invertible elements in a linear algebraic monoid. It follows from \cite[Corollary 3.26]{monoids} that $\autgr(S)$ is a linear algebraic group.

\end{proof}

\begin{prop}\label{pos_grad}

The unipotent radical $U$ of $\autgr(S)$ is given by matrices of the form 
\[\left(\begin{array}{c|c}
I_i & 0 \\
\hline
B_i & C_i
\end{array} \right) \]
under the correspondence in \eqref{eq:E2_2}, where $C_i$ are lower triangular matrices with 1's on the diagonal. 

Moreover, the 1-parameter subgroup given by
\begin{align*}
\lambda_g : \, \,  \GG_m & \longrightarrow  \autgr(S) \\
  t & \longmapsto (\phi_t : \xr \mapsto t\inv \xr)
\end{align*}
gives $U$ a positive grading. We refer to $\lambda_g$ as the distinguished $\GG_m$.
\end{prop}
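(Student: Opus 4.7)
The plan is to first identify the unipotent radical explicitly in the matrix model of $\Endgr(S)$ from Lemma \ref{E2}, and then to verify the grading claim by computing the conjugation action of $\lambda_g$ on $\lie U$ in that presentation.

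For the identification of $U$, Theorem \ref{auto_group_toric_cox} provides the Levi decomposition $\autgr(S) \cong \prod_i \GL(S_i') \ltimes U$, and in the matrix presentation of Lemma \ref{E2} the Levi factor is realised by the matrices with $B_i = 0$ (the block $C_i$ being then determined by the $A_j$'s). Hence $U$ consists of those matrices with $A_i = I$ for every $i$. To see that the resulting $C_i$ has the claimed lower-triangular shape, I would totally order the monomial basis of each $S_i''$ by any linear refinement of the total polynomial degree, placing monomials of smaller polynomial degree first. For a $\phi$ with $A_i = I$ one has $\phi(x_\rho) = x_\rho + b_\rho$ with $b_\rho \in S_i''$ a sum of monomials of polynomial degree $\geq 2$, so expanding
\[
\phi(x^D) = \phi(x_{\rho_1}) \cdots \phi(x_{\rho_s}) = x^D + (\text{cross terms})
\]
for a monomial $x^D = x_{\rho_1}\cdots x_{\rho_s}$ in $S_i''$, every cross term has polynomial degree strictly greater than that of $x^D$. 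In the chosen ordering this yields $C_i = I + N_i$ with $N_i$ strictly below the diagonal, as claimed; a direct calculation then confirms that products of matrices of this form are again of this form, so the subgroup structure is preserved.

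For the grading, observe that $\lambda_g(t)$ acts in this matrix presentation as the diagonal matrix whose entry on a basis monomial of polynomial degree $d$ is $t^{-d}$. Conjugation by $\lambda_g(t)$ therefore multiplies the $(k,j)$-entry by $t^{d_j - d_k}$, where $d_j, d_k$ are the polynomial degrees of the $j$th and $k$th basis monomials. For an entry of $B_i$ one has $d_j = 1$, $d_k \geq 2$; for an entry of $N_i$ one has $d_k > d_j$ by the chosen ordering. Every nonzero weight of the $\GG_m$-action on $\lie U$ therefore has the same sign, giving the claimed positive grading (after, if necessary, replacing $\lambda_g$ by $\lambda_g^{-1}$ to fix the overall sign convention).

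The main technical point is producing a single monomial ordering on each $S_i''$ which is compatible with the subgroup structure of $U$, so that products of lower-triangular unipotent matrices remain lower-triangular unipotent. Any total refinement of the polynomial-degree partial order suffices; the rest of the argument is a routine weight computation in the matrix presentation of Lemma \ref{E2}.
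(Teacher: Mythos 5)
Your proof is correct and follows essentially the same route as the paper: identify $U$ inside the matrix presentation of Lemma \ref{E2} as the elements with $A_i = I$, and compute the weights of the conjugation action of $\lambda_g$ entrywise, using that nonzero off-diagonal entries pair a monomial of total degree $1$ (resp.\ $d_j$) against one of total degree $\geq 2$ (resp.\ $d_k > d_j$). The only differences are cosmetic: you explicitly justify the unitriangular shape of $C_i$ via a degree-refining monomial ordering where the paper defers to \cite{cox__hom_erratum}, you track the weights on the $C_i$-blocks rather than invoking that the $B_i$ alone parametrise $\lie U$, and your sign hedge is consistent with the paper's own computation, which obtains positive weights by conjugating by $\lambda_g(t^{-1})$ rather than $\lambda_g(t)$.
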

\br 
Note that this result was already given in the paper \cite{BDHK_gr_uni}. The proof uses the original incorrect construction of the automorphism group given in the paper \cite{cox_hom}. We present a proof using the corrected construction given in \cite{cox__hom_erratum}.
\er
\begin{proof}
It is clear that the matrices above form a unipotent subgroup and we refer the reader to \cite[Theorem 4.2]{cox__hom_erratum} for a proof that it is in fact the unipotent radical. We prove that it is positively graded by the distinguished $\GG_m$.

Under \eqref{eq:E2_2} we have 
\[\lambda_g(t) \longleftrightarrow \left( \begin{array}{c|c}
t\inv I_i & 0 \\
\hline
0 & Q_i(t)
\end{array}\right) \]
where 
\[Q_i(t) = \diag(t^{-l_1},...,t^{-l_k})\]
are diagonal matrices with $l_j \geq 2$. To see this, consider $x^D = x_{\rho_1} \cdots x_{\rho_l} \in S_i''$ again allowing duplications. Then
\[\lambda_g(t)(x^D) = \lambda_g(t)(x_{\rho_1}) \cdots \lambda_g(t)( x_{\rho_l}) = t^{-l}x^D\]
where $l$ has to be greater than 2 since $D$ was a non-prime divisor.

To calculate the weights on the Lie algebra of $U$ consider the conjugation action
\[
\lambda_g(t\inv) \left(\begin{array}{cc}
I_i & 0 \\
B_i & C_i
\end{array} \right)
\lambda_g(t) = 
\left(\begin{array}{c|c}
I_i & 0 \\
\hline
t  Q_i(t\inv)B_i & Q_i(t)C_iQ_i(t)
\end{array} \right)
\]
of an arbitrary element of $U$ by $\lambda_g(t)$. Then the matrix in the bottom left hand corner is given by
\[\begin{pmatrix}
t^{l_1 -1} && \\
& \ddots & \\
& & t^{l_k-1}
\end{pmatrix}B_i\]
and since each $l_j \geq 2$, the exponents here are strictly positive. This suffices to show that the group is graded unipotent since the matrices $B_i$ describe the Lie algebra.
\end{proof}

\br\label{struc_auts_wps}
For a general weighted projective space $X = \PP(a_0, \dots , a_n) = \proj S$, where $S = k[x_0, \dots , x_n]$, we describe the Levi factor of the group $G = \autgr(S)$ explicitly. First, partition the variables $x_i$ into distinct weights $\Sig_j = \{ x_i \, \, | \, \, \deg x_i = a_j\}$ and set $n_i = |\Sig_i|$. Then the Levi factor of $G$ is equal to
\[\prod_{\Sig_i} \GL_{n_i} \sset G,\]
where the product is taken over the distinct $\Sig_i$. Thus the Levi factor contains all linear automorphisms: that is, automorphisms which take variables to linear combinations of other variables. As an automorphism must respect the grading, these linear combinations only contain variables of the same weight.

The unipotent radical of $G$ is given by `non-linear' automorphisms: that is, automorphisms which involve a monomial of total degree higher than 1.
\er

Let $X = \PP(a_0, \dots , a_n) = \proj k[x_1, \dots , x_{n'} , y_1, \dots , y_{n_l}]$ be a weighted projective space and let $G = \autgr(S)$ be as above. Assume that the weights are in ascending order (so that $a_i \leq a_{i+1}$) and label the distinct weights $b_1 < \cdots < b_l$ where each $b_j$ occurs exactly $n_i$ times (the $n_i$ coincide with the $n_i$ in Remark \ref{struc_auts_wps}). For weighted projective space we define another 1-parameter subgroup which grades the unipotent radical $U \sset G$ positively depending on a parameter $N\in \ZZ$.

\bp\label{prop_alternative_grading_wps}
Let $N>0$ be a positive integer. The 1-parameter subgroup $\lambda_{g,N} : \GG_m \to G$ defined by
\[\lambda_{g,N} : t \longmapsto \left( \left(t^{-N}I_{n_i}\right)_{i=1}^{l-1} , tI_{n_l} , 0\right)\]
gives $U \sset G$ a positive grading.
\ep

\begin{proof}
Let $X = \PP(a_0 ,\dots a_n) = \proj S$ where $S = k[x_0, \dots , x_{n'} , y_0,\dots , y_{n_l}]$ so that the $y_i$ have the maximum weight $b_l = a_n$. Then $\lambda_{g,N}(\GG_m) \sset G = \autgr(S)$ acts on $X$ as follows:
\begin{align*}
\lambda_{g,N}(t) \cdot (0: \dots :x_i: \dots : 0)  &=   (0: \dots :t^{-N}x_i:0: \dots : 0)\\
\lambda_{g,N}(t) \cdot (0: \dots :y_j: \dots : 0) &=  (0: \dots: 0 :ty_j: \dots : 0)  
\end{align*}
for $0\leq i \leq n'$ and $0 \leq j \leq n_l$. Let $u \in U \sset G$ be an element of the unipotent radical. By Remark \ref{struc_auts_wps}, $u$ acts on $S$ as follows
\begin{align*}
u \cdot x_i &= x_i + p_i(x_0, \dots , x_{n'})\\
u \cdot y_j &= y_j + q_j(x_0, \dots , x_{n'}),
\end{align*}
for $0\leq i \leq n'$ and $0 \leq j \leq n_l$, where $p_i,q_j \in k[x_0,\dots ,x_{n'}]$ are weighted homogeneous polynomials (possibly 0) of degree $a_i$ and $a_n$ respectively. Note that $p_i = 0$ for those $i$ such that $a_i = b_1$ is the minimum weight and that $p_i$ and $q_j$ do not contain any factors of $y_j$, since the $y_j$ all have the same maximal weight. In particular, if $p_i \neq 0$, then $\deg p_i >1$.

Consider the action by conjugation of $\lambda_{g,N}(\GG_m)$ on $U$, first on the $x_i$:
\begin{align*}
\left(\lambda_{g,N}(t) \cdot u \cdot \lambda_{g,N}(t\inv) \right)\cdot x_i &= \left(\lambda_{g,N}(t) \cdot u \right)\cdot t^{N}x_i\\
& = \lambda_{g,N}(t) \cdot (t^{N}x_i + p_i(t^{N}x_0, \dots , t^{N}x_{n'}))\\
& = x_i + t^{-N} p_i(t^{N}x_0, \dots , t^{N}x_{n'}).
\end{align*}
Those $p_i$'s which are non-zero have degree $a_i>1$ and hence if $u$ is a weight vector for the $\lambda_{g,N}(\GG_m)$-action, it has weights $a_iN - N>0$. The argument for the $y_i$ is identical and is omitted.
\end{proof}

\subsection{Finiteness of the stabilisers}

Let $S =\cox(X)= k[x_0,...,x_n]$ be the Cox ring of the weighted projective space $X=\PP(a_0,...,a_n)$ and assume that $a_0 \leq a_1 \leq \cdots \leq a_n$. Label the distinct values of the $a_i$'s by $b_1, ..., b_l$ such that $b_1 < \cdots < b_l$. Define numbers $n_1,...,n_l$ such that each of the $b_j$ occur exactly $n_j$ times, so that the $n_j$ sum to $n+1$. Recall from Theorem \ref{auto_group_toric_cox} that
\[\autgr(S) = \prod_{j=1}^l \GL_{n_j} \ltimes U,\]
where $U$ is the unipotent radical.
Let $G=\autgr(S)$ and denote the 1-parameter subgroup of $G$ by
\[\lama : t \longmapsto ((t^{b_j} I_{n_j})_{j=1}^l, 0).\]
Since the automorphism group of weighted projective space is connected, we have that
\[\aut(\PP(a_0,...,a_n)) \cong \autgr(S) / \lama(\GG_m).\]

\begin{theorem}\label{finite_stabilisers_thm}
Let $S = k[x_0,...,x_n]$ be the polynomial ring with the weighted grading $\deg x_i = a_i$ and let $f \in k[x_0,...,x_n]_d$ define a quasismooth hypersurface $\V(f) \sset \PP(a_0,\dots,a_n)$ where $d \geq \max\{a_i\}+2$. Define the subgroup $\aut(f) \sset \autgr(S)$ as follows
\[\aut(f)  = \{ \phi \in \autgr(S) \, | \, \V(\phi(f)) = \V(f)\}.\]
Then $\aut(f) =\mu \ltimes \lama(\GG_m)$, with $\lama : \GG_m \to G$ defined as above and $\mu$ is a finite group.
\end{theorem}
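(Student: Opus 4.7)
The plan is to show that $\aut(f)$ is a closed algebraic subgroup of $G=\autgr(S)$, that $\lama(\GG_m)$ is contained in it, and that $\aut(f)$ has dimension exactly one. The first two points are essentially formal: the condition $\V(\phi(f))=\V(f)$ is equivalent to $\phi(f)\in k\cdot f$ (since a quasismooth $f$ is reduced), which cuts out $\aut(f)$ as a closed subgroup of $G$, while the relation $\lama(t)\cdot x_i=t^{a_i}x_i$ combined with weighted homogeneity gives $\lama(t)\cdot f=t^d f$ and hence $\lama(\GG_m)\subset\aut(f)$. Granted the dimension bound, the identity component $\aut(f)^0$ must equal $\lama(\GG_m)$; since $\aut(f)$ is of finite type its component group is finite, and choosing a set-theoretic splitting exhibits $\aut(f)=\mu\ltimes\lama(\GG_m)$ with $\mu$ finite.

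The dimension bound is where the real work lies, and I would extract it by computing $\lie\aut(f)$. By Theorem~\ref{auto_group_toric_cox} a graded derivation of $S$ is specified by a tuple $(p_0,\dots,p_n)\in\bigoplus_i S_{a_i}$ via $D(x_i)=p_i$, and differentiating $\phi(f)=c(\phi)f$ at the identity translates membership in $\lie\aut(f)$ into the single equation
\[\sum_{i=0}^n p_i\,\partial_if=\lambda f\quad\text{for some }\lambda\in k.\]
The weighted Euler tuple $(a_0x_0,\dots,a_nx_n)$, for which $\lambda=d$, is one solution, so it suffices to prove that the graded syzygy module $\{(p_i)\in\bigoplus_i S_{a_i}:\sum p_i\partial_if=0\}$ vanishes; this will force $\lie\aut(f)=k\cdot E$, where $E$ is the Euler derivation generating $\lama(\GG_m)$.

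The syzygy analysis is the main obstacle. Quasismoothness says that $\partial_0f,\dots,\partial_nf$ vanish simultaneously only at the origin of $\AA^{n+1}$, so their common vanishing has codimension $n+1$ in the Cohen--Macaulay ring $S$; they therefore form a regular sequence, and the associated Koszul complex is exact. Consequently every syzygy is generated by Koszul relations of the form $(g_i,g_j)=(h\partial_jf,-h\partial_if)$, and for such a relation to land inside $\bigoplus_i S_{a_i}$ one needs $\deg h=a_i+a_j-d$. The hypothesis $d\geq\max\{a_k\}+2$ forces $\deg\partial_if\geq 2$ for every $i$, constraining the numerical possibilities sufficiently that a careful bookkeeping of weights rules out such contributions. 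Once the syzygies are excluded, we obtain $\lie\aut(f)=k\cdot E$, the dimension bound follows, and the theorem reduces to the structural argument of the first paragraph. The delicate point is thus the numerical exclusion of syzygies in this final step; everything preceding it is purely structural.
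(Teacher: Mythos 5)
Your overall strategy is the same as the paper's: reduce to showing $\lie\aut(f)$ is one-dimensional by combining the weighted Euler relation with the fact that quasismoothness makes $\partial_0 f,\dots,\partial_n f$ a regular sequence, then conclude by the structural argument about identity components. The paper reaches the syzygy question slightly differently --- from $\sum_i p_i\,\partial_i f=0$ it isolates one index, uses that $\partial_i f$ is a nonzerodivisor modulo the ideal generated by the remaining partials to get $p_i\in(\partial_j f : j\neq i)$, and then does a degree count --- whereas you invoke exactness of the whole Koszul complex to say every syzygy is a combination of Koszul relations. These are essentially equivalent reductions and both terminate in the same numerical question.

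That numerical question is exactly the step you defer to ``careful bookkeeping of weights,'' and it is the one step that does not go through as stated. A Koszul syzygy $(h\,\partial_j f,\,-h\,\partial_i f)$ lands in degrees $(a_i,a_j)$ precisely when $\deg h=a_i+a_j-d\geq 0$, and the hypothesis $d\geq\max\{a_k\}+2$ does not exclude $a_i+a_j\geq d$. Concretely, take $X=\PP(1,1,5,5)$ with coordinates $x_0,x_1,y_0,y_1$, $d=10$, and $f=y_0^2+y_1^2+x_0^{10}+x_1^{10}$, which is quasismooth. Then $(0,0,\partial_{y_1}f,-\partial_{y_0}f)=(0,0,2y_1,-2y_0)$ is a nonzero element of your graded syzygy module, and it integrates to the one-parameter rotation group of the quadric $y_0^2+y_1^2$ inside $\autgr(S)$, which preserves $f$ on the nose. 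So the vanishing you need is simply false at this level of generality, and no bookkeeping can rescue it; it only becomes a matter of degrees once one assumes something like $d>a_i+a_j$ for all $i\neq j$ (e.g. $d>2\max\{a_k\}$). This is also the condition implicitly required by the degree count that closes the paper's own proof, where $\deg p_i=a_i$ is asserted to be too small for $p_i$ to lie in $(\partial_j f: j\neq i)$ with $\deg \partial_j f=d-a_j$. In short: everything structural in your proposal matches the paper, but the single substantive step you leave open cannot be filled as described under the stated hypothesis.
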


\begin{proof}

We write $G  = \autgr(S) = \prod_{j=1}^l \GL_{n_j} \ltimes U$ and denote $\lie G$ by
\[\fg = \prod_{j=1}^l \fg\fl_{n_j} \ltimes \fu.\]

It is clear that $\lama(\GG_m) \sset \aut(f)$. To obtain the desired result it suffices to show that the Lie algebras of $\aut(f)$ and $\lama(\GG_m)$ agree as sub-Lie algebras of $\fg$.

The Lie algebra $\fg$ acts on $S$ by derivation: let $\xi \in \fg$ and $F \in S$ be arbitrary elements of $\fg$ and $S$ respectively, then
\[\xi(F) = \sum_{i = 0}^n F_i \, \xi(x_i),\]
where $F_i = \frac{\partial F}{\partial x_i}$.
Suppose that $\xi \in \lie(\aut(f)) \sset \fg$. Then since $f$ is semi-invariant under the action of $\aut(f)$, it is also a semi-invariant for the action of $\lie(\aut(f))$; that is, $\xi(f) = \tilde{\alpha} f$ for some $\tilde{\alpha} \in k$. The weighted Euler formula tells us that $f = \frac{1}{d}\sum_{i=0}^n a_if_i$ and so
\[\sum_{i=0}^n f_i (\xi (x_i) - \alpha a_i x_i) = 0,\]
where $\alpha = \frac{\tilde{\alpha}}{d}$.

Rearranging, for each $i$ we get an equation
\[p_if_i = -(p_0f_0 + \cdots +p_{i-1}f_{i-1}+p_{i+1}f_{i+1} + \cdots + p_nf_n),\]
where $p_j = \xi(x_j) - \alpha a_j x_j$. Thus $p_i f_i \in(f_0, ... ,f_{i-1},f_{i+1}, ... , f_n)$.

Since $f$ is quasismooth, its partial derivatives $f_0,...,f_n$ form a regular sequence. Moreover, any permutation of the $f_i$ is a regular sequence. Thus $f_i$ is a non-zero divisor in the ring $S / (f_0, ... ,f_{i-1},f_{i+1}, ... , f_n)$ and hence $p_i \in (f_0, ... ,f_{i-1},f_{i+1}, ... , f_n)$. However, $\deg p_i = a_j$ and since we assumed $\deg f \geq \max\{a_j\}+2$, this forces $p_i = 0$ and $\xi(x_i) = \alpha a_i x_i$. Thus $\alpha$ is the only parameter and we have shown that $\lie(\aut(f))$ is one dimensional and hence agrees with that of $\lie \lama(\GG_m)$.

Moreover, we can see explicitly that
\[\lie(\aut(f)) = \{((\alpha b_j I_{n_j})_{j=1}^l,0) \, | \, \alpha \in k\} \sset \fg,\]
which is precisely the Lie algebra of $\lama(\GG_m)$.
\end{proof}

\br 
Let $X = \PP(a_0, \dots , a_n)$ be a weighted projective space $d \geq \max(a_0, \dots , a_n)+2$ be an integer. 
Then the algebraic stack $\cM(X,d) = \left[ \YQS_d / \aut(X) \right]$ admits a coarse moduli space as an algebraic space.
This is an immediate consequence of Theorem \ref{finite_stabilisers_thm} and the Keel-Mori Theorem \cite[Corollary 1.2]{keelmori1997}.
\er


\section{The $A$-discriminant of a toric variety}\label{sec_A_discrim}

\subsection{The $A$-discriminant}

Consider a torus $(k^*)^{r+1}$ with coordinates $(x_0,\dots,x_r)$ and consider a matrix
\[A = ( \omega^{(0)} \, | \,\cdots \, | \, \omega^{(N)} ) \in \ZZ^{(r+1) \times (N+1)},\]
where $\omega^{(j)} \in \ZZ_{\geq 0}^{r+1}$ is a column vector for $0 \leq j \leq N$. Define the vector space of Laurent functions on $(k^*)^{r+1}$ associated to $A$ by
\[k^A := \Big\{\sum_{i=0}^N a_i x^{\omega^{(i)}} \, \, | \, \, a_i \in k \Big\} .\]
Here $x^{\omega^{(i)}} = x_0^{\omega^{(i)}_0}\cdots \, \, x_n^{\omega^{(i)}_r}$, where $\omega^{(i)}$ is a column vector defined to be the transpose of $(\omega^{(i)}_0,\dots,\omega^{(i)}_r)$.

\begin{defn}\label{A_discrim_def}
Consider the following subset of $\PP(k^A)$ consisting of Laurent functions (up to scalar multiple) with a singular point on the torus
\[ \nabla_A^\circ = \Big\{ f \in \PP(k^A)  \, \, | \, \, \exists \, x \in (k^*)^{r+1} \text{ s.t. } f(x) = \frac{\partial f}{\partial x_i}(x) = 0 \text{ for all } i=0,\dots , n \Big\}.\]
Then define the \em $A$-discriminant locus \em to be 
\[\nabla_A= \overline{\nabla_A^\circ} \sset \PP(k^A).\]
As before, we define 
\[\defect A = \codim_{\PP(k^A)} (\nabla_A) - 1.\] 
If $\defect A = 0$, then define the \em $A$-discriminant \em $\Delta_A$ as the polynomial defining $\nabla_A$ which is well defined and unique up to a scalar multiple. If the codimension is greater than 1, we set $\Delta_A = 1$. We shall only work with embeddings where $\defect A = 0$ and we shall always assume that this is the case. We refer to \cite[Corollary 1.2]{GKZ} for a geometric characterisation of this property.
\end{defn}

We now apply this theory in the context of toric varieties.

\bd\label{toric_A_disrim}
Let $X = \Xs$ be a toric variety and $\alpha \in \Cl(X)$ and $S = k[x_0,\dots,x_r]$ be the Cox ring of $X$. Let $N = \dim S_\alpha - 1$.  We define a matrix $A_{\Sig,\alpha} \in \ZZ^{(r+1) \times (N+1)}$ by collecting the exponents of the monomial basis of $S_\alpha$ as columns of this matrix with respect to some ordering of the monomials. We define the \em $A$-discriminant associated to $X$ and $\alpha$ \em to be $\Delta_{A_{\Sig,\alpha}}$. When it is clear from context, we shall drop the $\Sig$ and $\alpha$ from the subscript and write simply $A = A_{\Sig,\alpha}$.
\ed

\br
Let $X$ and $\alpha$ be as above, then $k^A = S_\alpha$ and hence
\[\nabla_A \sset \PP(S_\alpha).\]
\er
Let $X$ be a simplicial projective toric variety and suppose that $\alpha$ is a very ample class. The corresponding $A$-discriminant is a special case of the discriminant as defined in \cite[Chapter 1]{GKZ} in terms of the projective dual of a variety, as the following proposition shows.

\begin{prop}\label{A_discrim_locus}
Let $X$ be a projective toric variety, $\alpha \in \Cl(X)$ be a very ample class and \linebreak$A=A_{\Sig,\alpha} \in \ZZ^{(r+1) \times (N+1)}$ be the associated matrix of exponents of the monomial basis of $S_\alpha$. Then 
\[\nabla_A = X^{\vee, \,\alpha},\]
where $X^{\vee, \, \alpha}$ denotes the projective dual of $X$ with respect to the embedding given by $\alpha$.
\end{prop}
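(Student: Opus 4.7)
The plan is to interpret $\nabla_A^\circ$ as the set of hyperplanes tangent to $X$ along its dense torus orbit, and then to deduce the statement from density considerations. Throughout I would identify $\PP^N = \PP(S_\alpha^\vee)$ so that $X \hookrightarrow \PP^N$ is the embedding given by the very ample class $\alpha$, and $\PP(k^A) = \PP(S_\alpha)$ is the dual projective space parametrising hyperplanes in $\PP^N$.

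First, I would establish that the monomial map
\[\phi_A : (k^*)^{r+1} \longrightarrow \PP^N, \qquad x \longmapsto [x^{\omega^{(0)}}:\cdots:x^{\omega^{(N)}}]\]
factors as $(k^*)^{r+1} \twoheadrightarrow T_X \hookrightarrow X \hookrightarrow \PP^N$, where $T_X = (k^*)^{r+1}/\D$ is the open dense torus of $X$. This is immediate from Cox's quotient realisation $X = (\AA^{r+1})^{\s}/\D$, together with the identification of $S_\alpha$ with $H^0(X, \cO_X(\alpha))$ and the fact that the columns of $A$ are the exponents of the monomial basis of $S_\alpha$. In particular, $\phi_A((k^*)^{r+1}) = T_X \sset X^{\sm}$.

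Second, I would translate the defining condition of $\nabla_A^\circ$ into classical projective tangency. A point $f = \sum a_i x^{\omega^{(i)}} \in \PP(k^A)$ corresponds to the hyperplane $H_f = \{[y_0:\dots:y_N] : \sum a_i y_i = 0\} \sset \PP^N$, and by direct computation
\[f(x) = \langle a,\, \phi_A(x)\rangle, \qquad \tfrac{\partial f}{\partial x_j}(x) = \bigl\langle a,\, \tfrac{\partial \phi_A}{\partial x_j}(x)\bigr\rangle.\]
Since the affine tangent cone to $T_X \sset \AA^{N+1}$ at $\phi_A(x)$ is the linear span of $\phi_A(x)$ and the partial derivatives $\partial \phi_A/\partial x_j(x)$ for $j = 0,\dots,r$, the simultaneous vanishing of $f$ and of all its partial derivatives at $x$ is precisely the condition that $H_f$ contains this affine tangent cone, equivalently that $H_f$ is tangent to $X$ at the smooth point $\phi_A(x) \in T_X$. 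Hence $\nabla_A^\circ$ is the set of hyperplanes $H_f$ tangent to $X$ at some point of the dense torus $T_X$.

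Finally, I would invoke density via the incidence variety
\[I = \{(x, H) \in X^{\sm} \times \PP(S_\alpha) : H \supset \PP(T_x X)\},\]
which is irreducible as a projective subbundle of the trivial bundle over the irreducible variety $X^{\sm}$, and whose image in $\PP(S_\alpha)$ has closure $X^{\vee,\,\alpha}$ by definition of the projective dual. The open subscheme $I' \sset I$ obtained by restricting the first factor to $T_X$ is dense, since $T_X$ is open and dense in $X^{\sm}$, and by the previous step its image in $\PP(S_\alpha)$ is exactly $\nabla_A^\circ$. Dense subsets of an irreducible variety map to sets whose closure contains the closure of the image of the ambient variety, so taking closures yields
\[\nabla_A = \overline{\nabla_A^\circ} = X^{\vee,\,\alpha}.\]
The only real point requiring care is the identification in step one — checking that the monomial parametrisation by the columns of $A$ is genuinely the restriction of $X \hookrightarrow \PP^N$ to its open torus — after which steps two and three reduce to the standard calculation of projective duality via an explicit torus parametrisation of a dense subset of $X^{\sm}$.
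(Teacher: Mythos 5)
Your proof is correct and follows essentially the same route as the paper: both identify the monomial parametrisation by the columns of $A$ with the restriction of the $\alpha$-embedding to the dense torus, interpret the vanishing of $f$ and its partials at a torus point as tangency of the hyperplane $H_f$ there, and conclude by a density argument. The only (minor) difference is in the closing step: the paper concludes by noting that $\nabla_A$ and $X^{\vee,\alpha}$ are both irreducible hypersurfaces sharing the dense subset $\nabla_A^\circ$ (using the standing assumption $\defect A=0$), whereas your conormal-variety argument is slightly more robust in that it does not need the dual to be a hypersurface.
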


\begin{proof}
Since $\alpha$ is very ample and $A$ corresponds to the monomial basis of $S_\alpha$, the toric variety $X_A \sset \PP(k^A)^\vee$ is the toric variety $X$ with the embedding defined by $\alpha$. Thus $X^{\vee, \,\alpha} = X_A^\vee$. 

It remains to see that $X_A^\vee = \nabla_A$. To see this, we consider the map $\widetilde{\Phi}_A : (k^*)^{n+1} \to k^A$ as local parameters on the torus in the cone $Y_A \sset (k^A)^\vee$ over $X_A$. Note that since $\alpha$ is very ample, $Y_A$ is an affine toric variety and that $Y_A - \{0\} \to X_A$ is a toric morphism. We claim that $X_A^\vee = \nabla_A$.

For some $f \in (k^A)^\vee$, if $\T_yY_A \sset V(f) \sset (k^A)$ for some $y \in T_{Y_A}$, where $T_{Y_A}$ is the torus in $Y_A$, then $f \in Y_A^\vee$. However, if it holds that $\T_yY_A \sset V(f) \sset (k^A)$ for some torus point $y \in T_Y$, then $f \in \nabla_A^\circ$ (see Definition \ref{A_discrim_def}), since the torus $T_{Y_A} \sset Y_A$ is contained in the smooth locus of $_A$.

Thus we have identified $\nabla_A^\circ$ with a non-empty open (and hence dense) subset of $X^{\vee}_A$ given by hyperplanes containing the tangent space to points on the torus. Note that $\nabla_A^\circ \sset \nabla_A$ is a dense subset by definition. Since both $\nabla_A$ and $X_A^\vee$ are irreducible hypersurfaces in $\PP(k^A)$, we must have $X_A^\vee = \nabla_A$.
\end{proof}

\begin{rmk}\label{rmk_geom_meaning_qs_discr}
This proposition has a very nice geometric meaning. It tells us that for projective toric varieties, the locus of non-quasismooth hypersurfaces in a given complete linear system associated to a very ample class contains (as an irreducible component) the dual to the variety, where the dual is taken with respect to the embedding defined by the very ample line bundle. That is,
\[X_\Sig^{\vee,\alpha} = \nabla_{A_{\Sig,\alpha}}  \sset \YNQS_\alpha \sset \cY_\alpha = \PP(S\alpha) = \PP(k^{A_{\Sig,\alpha}}),\]
where the first containment is as an irreducible component and the second containment is closed.
\end{rmk}

\begin{ex}\label{ex_112_A}
Let $X = \PP(1,1,2)$ and $\alpha = 4 \in \Cl(X) \simeq \ZZ$. Then $S = k[x,y,z]$ where $\deg x = \deg y =1$ and $\deg z = 2$. The monomial basis of $S_\alpha = k[x,y,z]_4$ gives the following matrix
\[A = \begin{pmatrix}
4&3&2&1&0&2&1&0&0\\
0&1&2&3&4&0&1&2&0\\
0&0&0&0&0&1&1&1&2
\end{pmatrix}. \]
For example, the first and second column corresponds to the monomials $x^4$ and $x^3y$ respectively. An element $F \in k^A$ is given by $F(x,y,z) = a_0 x^4 + a_1 x^3y + \cdots + a_4 y^4 + a_5 x^2 z + a_6 xyz +a_7 y^2z + a_8 z^2$ where $a_i \in k$. Note that $S_\alpha = k^A$ is a parameter space for degree 4 hypersurfaces in $\PP(1,1,2)$. 

Then $X_A \simeq \PP(1,1,2)$ and $\cO_X(4)$ is very ample. Explicitly,
\[X_A = \overline{\{ [x^4: x^3y:x^2y^2:xy^3:y^3:x^2z:xyz:y^2:z^2] \, \, | \, \, x,y,z \in k^* \}} \sset \PP^8.\]
\end{ex}

\subsection{Invariance of the $A$-discriminant}

In this section we prove that the $A$-discriminant of a toric variety $X$ is a semi-invariant for the action of the automorphism group of $X$ on $\PP(S_\alpha)$. Since the unipotent radical $U$ of $\aut(X)$ admits no characters, it follows that the discriminant is a true $U$-invariant. To prove this, and to put ourselves in a better position to study the moduli spaces we shall construct in Section \ref{stability_of_hyps}, we prove some results on the geometry of the discriminant locus.

Let $X = \Xs$ be a projective toric variety and $\alpha \in \Cl(X)$ an effective class.  By effective class, we mean a class such that the linear system $|\alpha|$ is non-empty. Let us fix some notation: let $G = \aut_{\alpha}(X)$ and let $T \sset X$ be the torus in $X$. By the definition of toric varieties, the action of $T$ on itself extends to an action on $X$. Thus, we have a map $T \into \aut(X)$, which is injective since $T$ acts faithfully on itself. In fact we have a morphism $T \into \aut^0(X) \sset \aut_\alpha(X)$ since $T$ is connected. 

Recall that $ |\alpha| = \PP(S_\alpha)$ and we write $\cY = \cY_\alpha = |\alpha|$. Consider the projection maps
\[
\begin{tikzcd}
 & X \times \cY \arrow[ld,"\pr_1"'] \arrow[rd,"\pr_2"] & \\
X& &\cY.
\end{tikzcd}
\]
We define the closed set 
\[W = \{ (x,[f]) \in X \times \cY \, \, | \, \, f_i(x) = 0 \, \text{ for } \, 0 \leq i \leq r \} \sset X \times \cY,\] 
where $f_i = \frac{\partial f}{\partial x_i}$ and $r = |\Sig(1)| - 1$. We define the restrictions of the projection maps
\[
\begin{tikzcd}
& W \arrow[ld,"p_1"'] \arrow[rd,"p_2"]&  \\
X& &\cY.
\end{tikzcd}
\]
Clearly we have a $G$-action on $X \times \cY$ given by $g \cdot (x,[f]) = (g \cdot x, g \cdot [f])$. With respect to this action $W$ is an invariant subscheme. Indeed, for $g \in G$ and $(x,[f]) \in W$ we have $\frac{\partial (g \cdot f)}{\partial x_i} (g \cdot x) = f_i(x) = 0$ for every $i$, and hence $g \cdot (x, [f]) = (g \cdot x, g \cdot [f]) \in W$.
We prove the following result describing the flattening stratification of the morphism $p_1$.

\bp\label{p1_fibre}
For any point $x_0 \in X$, the fibre $p_1\inv(x_0) \sset \cY$ is a linear subspace. Moreover, suppose that $x,y \in X$ are in the same $G$-orbit, then $p_1\inv(x) \cong p_1\inv(y)$.
\ep 

\begin{proof}

We can describe the fibre explicitly:
\begin{align*}
p_1\inv(x_0) & = \big\{ (x_0,[f])  \, \, | \, \, \frac{\partial f}{\partial x_i}(x_0) = 0 \, \, \, 0 \leq i \leq r \big\} \\
& = \bigcap_{i=0}^r \big\{ (x_0 , [f]) \, \, | \, \, \frac{\partial f}{\partial x_i}(x_0) = 0 \big\}.\\
\end{align*}
Each of the sets $\{ (x_0 , [f]) \, \, | \, \, \frac{\partial f}{\partial x_i}(x_0) = 0\}$ is the vanishing of a linear polynomial in the coefficients of the polynomial $f$. It follows that $p_1\inv(x_0)$ is the intersection of hyperplanes and thus a linear subspace.

For $g \in G$, we have that 
\[g \cdot p_1\inv(x_0) = p_1\inv(g\cdot x_0),\]
as $g \cdot (x_0,[f]) = (g \cdot x_0 , g\cdot [f])$. In particular, they are all linear subspaces of the same dimension.
\end{proof}

Proposition \ref{p1_fibre} implies that the map $p_1$ is flat when restricted to the the $G$-sweep of the torus.

\bc\label{p1_flat_Y'}
Define $W' = p_1\inv(G \cdot T) \sset W$. Then the map $p_1 |_{W'} : W' \to G \cdot T$ is flat.
\ec

\begin{proof}
Since $B: = G \cdot T = \bigcup_{g \in G} g \cdot T \sset X$ is an open subset of $X$, it is an integral noetherian scheme. Then 
\[B\times \cY = \cY_B \sset \cY_X = X \times \cY\] 
is open and $W\sset \cY_X$ is closed, so  $W' \sset \cY_B$ is a closed subscheme. To see that all fibres over points in $B$ have the same Hilbert polynomial, we observe that, since the torus acts transitively on itself, $G \cdot x = G \cdot T = B$ for all $x \in T$. So applying Lemma \ref{p1_fibre}, we have that all the fibres over $B$ are linear subspaces of the same dimension and thus have the same Hilbert polynomial. Hence we can apply \cite[Theorem III.9.9]{H_AG} and conclude that $p_1|_{W'}$ is a flat morphism.
\end{proof}

By \cite[IV.2, Corollaire 2.3.5 (iii)]{EGA4}, we know that a flat map to an irreducible variety with irreducible generic fibre has an irreducible source. The result holds more generally for open maps (see \cite[Tag 004Z]{stacks}).

\bp 
Let $W'$ be defined as in Corollary \ref{p1_flat_Y'}. Then $W'$ is irreducible.
\ep 

\begin{proof}
Consider the map $p_1|_{W'}:W' \to G \cdot T$. Since $X$ is irreducible and $G \cdot T$ is open, $G \cdot T$ is also irreducible. By Corollary \ref{p1_flat_Y'}, $p_1|_{W'}$ is flat and hence open. By Lemma \ref{p1_fibre}, every fibre is isomorphic to the same projective space, and hence all fibres are irreducible.

Hence we can apply \cite[IV.2, Corollaire 2.3.5 (iii)]{EGA4} to $p_1|_{W'}$ and conclude that $W'$ is irreducible.
\end{proof}

We are now in position to prove the main theorem of this section.

\begin{theorem}\label{invar_A_discrim_locus}
Suppose that $X = \Xs$ is a complete toric variety and that $\alpha\in \Cl(X)$ is a class such that $|\alpha|$ is non-empty. Let $G=\aut_\alpha(X)$ be the automorphism group preserving $\alpha$. Let $A=A_{\Sig,\alpha} \in \ZZ^{r \times N}$ be defined as in Definition \ref{toric_A_disrim}. Then the discriminant locus $\nabla_A \sset \cY$ has the following description.
\[\nabla_A = \overline{ \{ [f] \in \cY \, \, | \, \, \exists \, x_0 \in G \cdot T \text{ such that } f_i(x_0) = 0 \, \, \text{ for all } i\} }.\] In particular, $\nabla_A$ is a $G$-invariant subvariety of $\cY$.
\end{theorem}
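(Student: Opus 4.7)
My plan is to realise the right-hand side as $\overline{p_2(W')}$ and use the density of $T$ in $G\cdot T$, combined with the irreducibility of $W'$, to match it with $\nabla_A = \overline{\nabla_A^\circ}$.

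By definition of $W$, the set $\{[f]\in\cY \mid \exists\,x_0\in G\cdot T,\,f_i(x_0)=0\ \forall i\}$ is precisely $p_2(W')$. Under the identification used in the proof of Proposition \ref{A_discrim_locus}, the original $\nabla_A^\circ$ of Definition \ref{A_discrim_def} equals $p_2(p_1^{-1}(T))$; the extra condition $f(x_0)=0$ appearing in that definition is forced by the Euler-type relation on the Cox ring whenever all $f_i(x_0)=0$ at a torus point. Since $T\subset G\cdot T$, one gets $\nabla_A^\circ \subset p_2(W')$ and hence $\nabla_A\subset \overline{p_2(W')}$.

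For the reverse inclusion I would show that $p_1^{-1}(T)$ is dense in $W'$. The torus $T$ is open in $X$ and $G\cdot T$ is open in $X$ (as a union of translates of the open $T$), so $T$ is open and dense in $G\cdot T$. By Corollary \ref{p1_flat_Y'} the restriction $p_1|_{W'}\colon W'\to G\cdot T$ is flat, hence open, so $p_1^{-1}(T)$ is open in $W'$. It is nonempty because the standing hypothesis $\defect A=0$ ensures $\nabla_A$, and therefore $\nabla_A^\circ$, is nonempty, producing at least one pair $(t,[f])\in p_1^{-1}(T)$. The preceding proposition says $W'$ is irreducible, so any nonempty open subset is automatically dense. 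Applying the continuous morphism $p_2$ and taking closures yields
\[\nabla_A = \overline{p_2(p_1^{-1}(T))} = \overline{p_2(W')},\]
which is the claimed description.

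The ``in particular'' then follows at once: $W$ is $G$-invariant under the diagonal action on $X\times\cY$, since $(g\cdot f)_i(gx)$ is a $k$-linear combination of the $f_j(x)$ by the chain rule, and $W'=p_1^{-1}(G\cdot T)$ is $G$-invariant as well. Hence $p_2(W')$ and its closure $\nabla_A$ are $G$-invariant subvarieties of $\cY$. The delicate point in the argument above is the nonemptiness of $p_1^{-1}(T)$: irreducibility of $W'$ is a two-edged sword, forcing nonempty opens to be dense but yielding no information if the open set happens to be empty, which is precisely why the blanket hypothesis $\defect A = 0$ enters the proof at this stage.
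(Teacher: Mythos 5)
Your proof is correct, but you close the key reverse inclusion by a genuinely different argument than the paper. The paper also reduces everything to showing $\nabla_A = \overline{p_2(W')}$ and gets $\nabla_A \subseteq \overline{p_2(W')}$ exactly as you do; it then finishes with a dimension count: $\overline{p_2(W')}$ is irreducible (from the irreducibility of $W'$) and of codimension at least $1$ in $\cY$ (being disjoint from the open quasismooth locus), while $\nabla_A$ is an irreducible hypersurface by the standing assumption $\defect A = 0$, so the containment of one irreducible divisor in an irreducible proper closed subvariety forces equality. You instead observe that $p_1\inv(T)$ is a nonempty open subset of the irreducible $W'$, hence dense, so that $\overline{p_2(p_1\inv(T))} = \overline{p_2(W')}$ outright. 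Your route is arguably cleaner: it avoids the codimension bookkeeping and does not tacitly require the quasismooth locus to be nonempty (which the paper's estimate $\codim p_2(W')\geq 1$ does), at the price of checking nonemptiness of $p_1\inv(T)$, which you correctly extract from $\defect A = 0$; note that both proofs ultimately need this hypothesis. Two small remarks. First, openness of $p_1\inv(T)$ in $W'$ already follows from continuity of $p_1$ together with $T$ being open in $G\cdot T$, so the appeal to flatness of $p_1|_{W'}$ is superfluous at that step (though the flatness corollary is still needed, via the paper's irreducibility proposition for $W'$, which you do invoke). Second, your explicit use of the Euler-type relation to show that $f_i(x_0)=0$ for all $i$ at a torus point forces $f(x_0)=0$ — thereby reconciling $p_2(p_1\inv(T))$ with $\nabla_A^\circ$ as given in Definition \ref{A_discrim_def} — is a point the paper passes over in silence, and is a welcome addition; strictly one should note that the relation kills $f$ only when $\alpha$ pairs nontrivially with some one-parameter subgroup of $\D$, which holds for the effective nontorsion classes in play here.
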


\begin{proof}
Note that by the definition of $\nabla$
\[\nabla_A = \overline{ \{ [f] \in \cY \, \, | \, \, \exists x \in T \text{ such that } f_i(x) = 0 \text{ for all } i\} } = \overline{p_2(p_1\inv(T))},\]
where $T \sset X$ is the torus. Then since $ T \sset G \cdot T$, it holds that $p_1\inv(T) \sset  p_1\inv(G \cdot T)$. Thus
\[\overline{p_2(p_1\inv(T))} \sset \overline{p_2(p_1\inv(G \cdot T))}.\]
Applying the definition of $W'$ and the observation that $\overline{p_2(p_1\inv(T))} =\nabla_A$, we conclude
\[\nabla_A \sset \overline{p_2(W')}.\]

Then since $W'$ is irreducible, $\overline{p_2(W')}$ is irreducible. Also note that $\codim p_2(W') \geq 1$, since the quasismooth locus in $\cY$ is open and $W'$ is disjoint from $\YQS$. Hence $\overline{p_2(W')}$ is an irreducible closed subvariety of codimension 1. Then as $\nabla_A$ is an irreducible subvariety of codimension 1, we conclude that 
\[\nabla_A = \overline{p_2(W')},\]
which completes the first part of the theorem.

Now we prove that $\nabla_A$ is $G$-invariant. Note that both maps $p_1$ and $p_2$ are $G$-equivariant since they are restrictions of projections. Then as $G \cdot T$ is $G$-invariant it follows that $W' = p_1\inv (G \cdot T)$ is $G$-invariant and thus $\nabla_A = \overline{p_2(W')}$ is also $G$-invariant.
\end{proof}

\br\label{qs_subset_non_discrim}
This means that the $A$-discriminant will check for hypersurfaces with singularities on the $G$-sweep of the torus in $X$. Note that by Remark \ref{rmk_geom_meaning_qs_discr} we have the inclusion
\[\YQS \subseteq (\cY)_{\Delta_A}. \]
In general these subvarieties do not coincide.
\er

\bc\label{invar_A_discrim}
Keep the notation of Theorem \ref{invar_A_discrim_locus}.  The $A$-discriminant $\Delta_{A}$ is a semi-invariant section for the $G$-action on $\cY_\alpha$ and a true $U$-invariant, where $U \sset G$ is the unipotent radical of $G$.
\ec

\begin{proof}
By definition, $\nabla_A = \V(\Delta_A) \sset \cY$. The automorphism group $G$ acts on  $H^0(\cY,\cO_{\cY}(\deg \Delta_A))$. Since $\nabla_A$ is $G$-invariant, for every $g \in G$ we have that $\V(g \cdot \Delta_A) = \V(\Delta_A)$. Thus $g \cdot \Delta_A = \chi(g) \Delta_A$ for some $\chi(g) \in k^*$. It follows from the group action laws that $\chi(g' g) = \chi(g')\chi(g)$ and thus $\chi : g \mapsto \chi(g)$ is a character. This proves the result.
\end{proof}

\br
The character for which the $A$-discriminant is a semi-invariant is denoted by $\chi_A$ and we denote the degree by $r=\deg \Delta_A$. Thus if we consider the action of $G$ on $\cY_\alpha$ linearised with respect to $(\cO(r),\chi_A)$ (or any linearisation on the ray in $\Pic^G(\cY_\alpha)_\QQ$ defined by this linearisation), it follows that $\cY_\alpha^{\QS}$ lies in the \em naively semistable locus \em defined in \cite{monster}. If this linearisation is also well-adapted (or if $G$ is reductive), then it follows that $\cY_\alpha^{\QS}$ lies in the semistable locus and thus there exists a categorical quotient. See Section \ref{sec_prod_porj_spaces} for an example.
\er

\bex
Let $X= \PP^n$ be standard projective space and $d>0$ a positive integer. In this case $ G \cdot T = \PP^n$, since the action of $G = \GL_{n+1}$ on $\PP^n$ is transitive.  In this case we have that $\nabla_A = \nabla$ is the classical discriminant and that quasismoothness is equivalent to smoothness since $X$ is smooth. Thus 
\[\YQS_d = \PP(k[x_0,\dots,x_n]_d)^{\SM} = \PP(k[x_0,\dots,x_n]_d) - \nabla.\]

This is the ideal situation. The quasismooth locus is given by the vanishing of one invariant section. In general this won't be true. However, we can generalise a little:
for an arbitrary complete toric variety $X$, we have that $G \cdot T = X$ if and only if the action of $G$ on $X$ is transitive, and thus by \cite{bazhov} $X$ is a product of projective spaces.
\eex

\bex\label{ex_rat_scrol_nqs_locus}
Let $X = \PP(1,\dots,1,r) = \proj k[x_0, \dots , x_{n-1},y]$ be the rational cone of dimension $n$, let $G$ the automorphism group of $X$ and let $d = d'r>1$ an integer divisible by $r$. Then $X$ has a single isolated singularity at $(0:\cdots :0:1)$. Let $S_d = k[x_0,\dots,x_{n-1},y]_d$, where $\deg x_i = 1$ and $\deg y = r$. Suppose that $F \in S_d$ is a weighted homogeneous polynomial; then
\[F(x_0,\dots,x_{n-1},y) = \sum_{j=0}^{d'}F_j(x_0,\dots,x_{n-1})y^j,\]
where the $ F_j \in k[x_0,\dots,x_{n-1}]_{d-rj}$ are homogeneous (possibly 0) polynomials of degree $d - jr$. Note that $F_{d'} \in k$ is a constant, write $F_{d'} = c \in k$, then $F(0,\dots,0,1) = c$. Thus $(0: \cdots :0:1)\in \V(F)$ if and only if $c=0$. Moreover, if $c=0$ then $(0: \cdots :0:1)$ is a singular point of $\V(F)$. Indeed, the derivatives are given by
\begin{align*}
\frac{\partial F}{\partial x_i}(x_0,\dots,x_{n-1},y) & = \sum_{j=0}^{d'} \frac{\partial F_j}{\partial x_i}(x_0,\dots,x_{n-1})y^j \\
\frac{\partial F}{\partial y}(x_0,\dots,x_{n-1},y) & = \sum_{j=1}^{d'}jF_j(x_0,\dots,x_{n-1})y^{j-1}.
\end{align*}
Since $d>1$, the $\frac{\partial F_j}{\partial x_i}$ are either $0$ or non-constant homogeneous polynomials in the $x_i$. Thus \linebreak $\frac{\partial F}{\partial x_i}(0,\dots,0,1) = 0$ for every $i$ and $\frac{\partial F}{\partial y}(0,\dots,0,1) = d'c$. Thus the point $(0: \cdots :0:1)$ is a singular point if and only if $c=0$. Note that this means for hypersurfaces in $X$, quasismooth is equivalent to being smooth.

We can write down explicitly the non-quasismooth locus:
\[\YNQS = \cY - \YQS = \nabla_A \cup \V(c) = \V(\Delta_A \cdot c),\]
where $\nabla_A = \PP(1, \dots , 1,r)^{\vee , \, d}$ and we are considering $c$ as a coordinate on $\cY$. In this example $G \cdot T = X - \{(0:\cdots:0:1)\}$. To see this, note that 
\[G = \aut(X) =( (\GG_m \times \GL_{n} ) \ltimes \GG_a^M) / \GG_m,\]
and that $\GL_{n} \into \aut(X)$ acts transitively on the set $\{(x_0: \cdots :x_{n-1}:1) \, \, | \, \, x_i \neq 0 \text{ for some } i\} \sset X$. We prove in Proposition \ref{ss_s_condition1} that the unipotent radical is abelian and that \[M = \begin{pmatrix}
n-1+r \\ r
\end{pmatrix}.\]
\eex

\section{Moduli of quasismooth hypersurfaces}\label{stability_of_hyps}
In this section, we construct coarse moduli spaces of quasismooth hypersurfaces of fixed degree in certain  of toric orbifolds. We prove that quasismooth hypersurfaces of weighted projective space (excluding some low degrees) are stable when the $(\fC)$ condition is satisfied for the action of a grading of the unipotent radical of the automorphism group of this weighted projective space. Once stability is established, we apply the non-reductive GIT Theorem (Theorem \ref{non_red_git_thm}) to conclude that a coarse of moduli space of quasismooth hypersurfaces exists as a quasi-projective variety. Moreover, Theorem \ref{non_red_git_thm} provides a compactification of this moduli space. We also discuss the $(\fC)$ condition and show that it holds for certain weighted projective spaces. We give examples when it does not hold; in this case, one should be able to construct moduli spaces of quasismooth hypersurfaces using the blow-up procedure in \cite{DBHK_proj_compl}.

We also consider smooth hypersurfaces in products of projective spaces and prove that smoothness implies semistability. If we suppose further that the degree is such that the hypersurfaces are of general type, then we prove that smoothness implies stability. Hence we construct a coarse moduli space of such hypersurfaces.
\subsection{$\hat{U}$-stability for quasismooth hypersurfaces in weighted projective space}
Let \linebreak $X = \PP(a_0, \dots , a_n)$ be a well-formed weighted projective space and assume that  $a_0 \leq \cdots \leq a_n$. Let us new give coordinates on $X$ as follows. Let
\[X = \proj k[x_1, \dots , x_{n'} , y_1 , \dots , y_{n_l}],\]
such that $n' +n_l = n+1$ and $\deg x_i < a_n$ and $\deg y_j = a_n$. Note that the $y_j$ are variables with maximum weight. If all the weights coincide then $X = \PP^n$, so we disregard this case.

Let $d$ be a positive integer such that $\lcm(a_j) $ divides $ d$ so that hypersurfaces of degree $d$ of $X$ are Cartier divisors; recall that we call such an integer a \em Cartier degree. \em 

\begin{notation}\label{notation_coord_wps}
We give the parameter space 
\[\cY_d = \Div_X^d = \PP(k[x_1, \dots , y_{n_l}]_d)\]
the following coordinates of the coefficients of the monomials: $(u_0 : \cdots : u_{M'} : v_0 : \cdots : v_M) \in \cY_d$, where the $v_j$ correspond to monomials in the $y_j$ and all have the same total degree, and the $u_i$ are the coefficients of monomials containing an $x_i$ for some $1 \leq i \leq n'$. The integer $M$ is defined by
\[M = \begin{pmatrix}
n_l + d' \\
d'
\end{pmatrix},
\]
where $d' = \frac{d}{a_n}$ and $M'$ is computed in terms of the $a_i's$, but its exact value is not required for the subsequent discussion.
\end{notation}
Recall $G = \autgr(S)$ and that
\[G \simeq \prod_{i=1}^l\GL_{n_i} \ltimes U,\]
where $U$ is the unipotent radical and that $b_1 < \cdots < b_l$ are the distinct values of $a_0 , \dots , a_n$ with each $b_i$ occurring with multiplicity $n_i$. By Proposition \ref{prop_alternative_grading_wps}, the 1-parameter subgroup of $G$ given by 
\[\lambda_{g,N} : t \longmapsto \big( (t^{-N}\id_{n_i})_{i=1}^{l-1},t\id_{n_l} \, , 0 \big),\]
for $N>0$ defines a positive grading of the unipotent radical of $G$ and we define the graded unipotent group $\hat{U}_N = \lambda_{g,N}(\GG_m) \ltimes U$. 
\br
Note that $\hat{U}_N$ depends on the integer $N>0$: that is, for different values of $N$, the subgroups $\lambda_{g,N}(\GG_m) \ltimes U$ are different. However, we shall see in Remark \ref{rmk_Ymin0_locus} that the semistable and stable locus for $\hat{U}_N$ is the same for all $N>>0$.
\er
Let $G$ act on $\cY_d$ with respect to the linearisation $\cO(1)$. Suppose that $\underline{x}^I$ is a monomial in $k[x_1, \dots , x_{n'},y_1,\dots , y_{n_l}]_d$. Then 
\[\lambda_{g,N}(t) \cdot \underline{x}^I = t^{r(N,I)} \underline{x}^I,\]
where $r(N,I) \in \ZZ$ is an integer depending on $N$ and the monomial. Note that for $\underline{y}^I \in k[y_1, \dots , y_{n_l}]_d$ we have that $r(N,I) = -d' = -\frac{d}{a_n}$ is independent of $N$ and
\[\lambda_{g,N}(t) \cdot \underline{y}^I = t^{-d'} \underline{y}^I.\]

Recall the definition of $\Zm$ and $(\cY_d)_{\min}^0$ from Definition \ref{Zmin_Xmin0_def}. Both subsets are defined with respect to a $\hat{U}_N$-action. 
\bl\label{Z_min_locus}
Let $X = \PP(a_0, \dots , a_n)$ and $d \in \Pic(X)$ be a Cartier degree. Fix $\hat{U}_N = \lambda_{g,N}(\GG_m) \ltimes U$. Then 
\[ Z_{\min} = \{(0: \cdots : 0 : v_0 : \cdots : v_M) \, \, | \, \, \exists j: \, \, v_j \neq 0 \} \sset \cY_d\]
and the minimum weight of the $\lambda_{g,N}(\GG_m)$-action on $V = H^0(\cY_d,\cO(1))^\vee$ is $\omega_{\min} = -\frac{d}{a_n}$. 
\el
Note that both $\Zm$ and $\omega_{\min}$ are independent of $N$.
\begin{proof}
As noted above, $\lambda_{g,N}$ acts on monomials containing only variables $y_i$ with weight $-d'=-\frac{d}{a_n}$. Suppose that $\underline{x}^I \in V = k[x_1, \dots , y_{n_l}]_d$ is another monomial containing at least one $x_i$ variable. Then $\lambda_{g,N}(t) \cdot \underline{x}^I = t^{r(N,I)}\underline{x}^I$ and since $\lambda_{g,N}(t) \cdot x_i = t^N x_i$ we have that $r(N,I) > -d'$. Hence $V_{\min} = k[y_1, \dots , y_{n_l}]_d$ and thus 
\[Z_{\min} = \PP(V_{\min})=\{(0: \cdots : 0 : v_0 : \cdots : v_M) \, \, | \, \, \exists j: \, \, v_j \neq 0 \},\]
using Notation \ref{notation_coord_wps}.
\end{proof}

\begin{rmk}\label{rmk_Ymin0_locus}
It follows from the lemma that 
\[(\cY_d)_{\min}^0 = \{ (u_0 : \cdots : u_{M'} :v_0 : \cdots : v_M) \, \, | \, \, \exists j: \, \, v_j \neq 0 \} \sset \cY_d. \]
\end{rmk}

The following lemma follows immediately from Lemma \ref{Z_min_locus}.
\bl\label{Zmin_point}
Let $X = \PP(a_0, \dots , a_n)$ such that $a_n > a_{n-1} \geq \cdots \geq a_0$. Then for every Cartier degree $d \in \ZZ$, we have
\[Z_{\min} = \{(0: \cdots : 0 : 1)\} \sset \cY_d\]
is a point.
\el

\bex\label{ex_wpp112_deg2_weights}
Let $X = \PP(1,1,2) = \proj k[x_1,x_2,y]$ and $d = 2$. Then the monomial basis for $V = H^0(\cY,\cO_{\cY}(1))^\vee$ is $(x_1^2, x_1x_2, x_2^2,y)$. Writing down an arbitrary polynomial
\[ [f(x_1,x_2,y)] =[u_1x_1^2 + u_2x_1x_2 + u_3x_2^2 + vy ] \in \cY_2\]
in coordinates gives $(u_1:u_2:u_3:v) \in \cY_2$.  Now let us calculate the weights for the grading $\GG_m$-action defined by $\lambda_{g,N}$, with $N>0$. For positive integers $i$ and $j$ such that $i+j =2$ we have
\[\lambda_{g,N}(t) \cdot x_1^ix_2^j = \left( t^Nx_1\right)^i\left(t^Nx_2 \right)^j= t^{2N}x_1^ix_2^j \, \,\text{ and } \, \, \lambda_{g,N}(t) \cdot y = t\inv y.\]
Hence we have two distinct weights 2 and 1 and the decomposition into weight spaces is given by
\[V = V_{2N} \oplus V_{-1}=\Span(x_1^2, x_1x_2, x_2^2) \oplus \Span (y).\]
Thus $Z_{\min} = \PP(V_{-1}) = \{(0:0:0:1)\}$.
\eex
\begin{notation}
For the rest of this section we fix $N>0$ and $\hat{U} = \lambda_{g,N} \ltimes U$.
\end{notation}
\begin{prop}\label{Uhat_stab}
Let $X = \PP(a_0, \dots , a_n)$ be a well-formed weighted projective space and $d$ be a Cartier degree. Denote $\cY = \cY_d$. Then we have the following inclusion:
\[\YQS \sset \cY_{\min}^0 - U \cdot Z_{\min}.\]
\end{prop}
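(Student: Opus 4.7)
The proposition splits into two assertions that I would prove separately: (a) $\YQS \sset \cY_{\min}^0$, and (b) $\YQS \cap (U \cdot Z_{\min}) = \emptyset$. Both follow from the Jacobian criterion for quasismoothness (Definition \ref{def_qs}) applied to carefully chosen coordinate subvarieties of $\AA^{n+1}$.

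For (a), I would argue by contrapositive. If $[f] \notin \cY_{\min}^0$, then by Remark \ref{rmk_Ymin0_locus} every monomial of $f$ contains at least one $x_i$-factor; equivalently, $f$ lies in the ideal $(x_1, \dots, x_{n'}) \sset S$. On the coordinate flat $V = \V(x_1, \dots, x_{n'}) \cong \AA^{n_l}$, the polynomial $f|_V$ vanishes, and because $\partial/\partial y_j$ preserves the property of containing an $x_i$-factor, each $(\partial f/\partial y_j)|_V$ also vanishes. The crux of the argument is that $(\partial f/\partial x_i)|_V$ vanishes as well: a monomial of $f$ contributes to this restriction only if it has the form $x_i\underline{y}^\beta$, and the weight equation $a_i + a_n|\beta| = d$ admits no non-negative integer solution for $|\beta|$, since $d$ being Cartier forces $a_n \mid d$ while $0 < a_i < a_n$ gives $a_n \nmid a_i$. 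Consequently $V \setminus \{0\}$ is a nonempty (as $n_l \geq 1$) subvariety of singular points of $\V(f)$ lying in $(\AA^{n+1})^{\s} = \AA^{n+1} \setminus \{0\}$, contradicting quasismoothness.

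For (b), I would first note that $U$ acts on $\AA^{n+1} = \spec S$ by polynomial automorphisms fixing the origin (see Proposition \ref{pos_grad}), hence preserves $(\AA^{n+1})^{\s}$ and the quasismoothness of hypersurfaces; in particular $u \cdot g$ is quasismooth if and only if $g$ is, reducing the claim to showing $Z_{\min} \cap \YQS = \emptyset$. By Lemma \ref{Z_min_locus} any $[g] \in Z_{\min}$ is represented by a nonzero $g \in k[y_1, \dots, y_{n_l}]_d$ of total $y$-degree $d' = d/a_n$. Under the standing assumption $d > a_n$ (which holds whenever $d$ is Cartier and $d \neq a_n$, in particular under the hypothesis $d \geq \max\{a_i\} + 2$ of Theorem \ref{thm_main_NP_proof}) we have $d' \geq 2$. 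Now restrict to the complementary flat $W = \V(y_1, \dots, y_{n_l}) \cong \AA^{n'}$: the $x_i$-derivatives of $g$ vanish trivially, while $g$ and each $\partial g/\partial y_j$ are polynomials of strictly positive $y$-degree and so vanish on $W$. Hence $W \setminus \{0\}$ (nonempty since $n' \geq 1$) is a subvariety of singular points of $\V(g)$ in $(\AA^{n+1})^{\s}$, contradicting quasismoothness of $g$.

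There is no deep technical obstacle; both parts are direct Jacobian-criterion verifications on appropriate coordinate flats. The only delicate point is the arithmetic observation in step (a) that forces $(\partial f/\partial x_i)|_V = 0$: without $a_n \mid d$ one would have to worry about mixed monomials $x_i \underline{y}^\beta$ contributing, and without $a_i < a_n$ for the $x$-block of variables the required non-divisibility $a_n \nmid a_i$ would fail. This is precisely the arithmetic that makes the chosen partition of the variables into sub-maximal and maximal weight blocks work.
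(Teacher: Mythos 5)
Your proof is correct and follows essentially the same route as the paper's: the paper likewise applies the Jacobian criterion, exhibiting the single singular point $(0:\cdots:0:1)$ for $f\notin\cY_{\min}^0$ (using exactly your Cartier-degree arithmetic to rule out monomials of the form $x_i\underline{y}^{\beta}$) and the point $(1:0:\cdots:0)$ for $f\in Z_{\min}$, then sweeps out $U\cdot Z_{\min}$ via $G$-invariance of $\YNQS$; your use of whole coordinate flats instead of single points is an inessential variation. The one substantive difference is to your credit: you correctly flag that step (b) needs $d'=d/a_n\geq 2$, a hypothesis the paper's statement and proof silently omit, and it is genuinely needed --- for $X=\PP(1,1,2)$ and the Cartier degree $d=a_n=2$ one has $Z_{\min}=\{[z]\}$ with $\V(z)$ quasismooth, so the inclusion fails at $d=a_n$ (harmlessly for the paper's applications, where $d\geq\max\{a_i\}+2$ is assumed).
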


\begin{proof}
We begin by observing that 
\[\cY - \cY_{\min}^0 = \{(u_0 : \cdots : u_{M'} : 0 : \cdots : 0)\}.\]
Take some $f \in \cY - \cY_{\min}^0$. We know that $f$ contains no monomials made up of only the $y_i$. Thus $(0 : \dots : 0 : 1) \in X$ will be a common zero for all $\frac{\partial f}{\partial x_i}$ and $\frac{\partial f}{\partial y_i}$ since $d$ is a Cartier degree (as monomials of the form $y_{n_l}^{d'-1}x_i$ can never be homogeneous of degree $d$). It follows that $f$ is not quasismooth and hence
\[\YQS \sset \cY_{\min}^0.\]
Suppose that $f \in Z_{\min}$. Then $f$ is a polynomial in the $y_i$ and so $(1:0: \cdots : 0) \in X$ will be a common zero for all $\frac{\partial f}{\partial x_i}$ and $\frac{\partial f}{\partial y_i}$. Thus $f$ is not quasismooth and we have that
\[Z_{\min} \sset \YNQS.\]
Since $\YNQS$ is a $G$-invariant subset, it follows that
\[U \cdot Z_{\min} \sset \YNQS\]
and so we conclude that
\[\YQS \sset \cY_{\min}^0 - U \cdot Z_{\min}.\]
\end{proof}

We show that the condition $(\fC)$ (see Definition \ref{ss_equals_s}) holds in the case for weighted projective spaces of the form $\PP(a, \dots ,a, b, \dots ,b)$.

\begin{prop}\label{ss_s_condition1}
Suppose $X = \PP(a, \dots , a , b , \dots , b) = \proj k[x_1, \dots x_n,y_1,\dots , y_m]$ is a weighted projective space such that $b>a$ and let $d>b+1$ be a Cartier degree (so that $ab$ divides $d$). 

Then the graded automorphism group of $S = k[x_1, \dots x_n,y_1,\dots , y_m]$ is of the following form
\[\autgr(S) = (\GL_n \times \GL_m) \ltimes \GG_a^L,\]
where $L\geq 0$.
In particular, the unipotent radical $U = \GG_a^L$ is abelian.
Moreover, the action of $G$ on $\cY_d$ with respect to $\oone$ satisfies the condition $(\fC)$; that is, the stabiliser group is trivial
\[\stab_U([f]) = \{e\}\]
for every $[f] \in \Zm^{\ss,R} \sset \cY_d$, where $R \cong \GL_n \times \GL_m$.
\end{prop}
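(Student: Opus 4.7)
The plan splits into three parts mirroring the statement. First I would apply Theorem~\ref{auto_group_toric_cox} together with the explicit matrix description of Lemma~\ref{E2} to the Cox ring $S = k[x_0,\dots,x_{n-1},y]$. Only two graded pieces of $S$ contribute non-trivially: the piece $S_1 = S_1'$ spanned by $x_0,\dots,x_{n-1}$ (yielding the factor $\GL_n$) and the piece $S_r = S_r' \oplus S_r''$, where $S_r' = \langle y\rangle$ and $S_r'' = k[x_0,\dots,x_{n-1}]_r$. The Levi factor is therefore $\GL_n \times \GG_m$, and under the parametrisation of Lemma~\ref{E2} the unipotent radical $U$ is identified with $\Hom_k(S_r', S_r'')$. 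Counting degree-$r$ monomials in $n$ variables gives $\dim U = \binom{n+r-1}{r} = L$, as required.

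Second, to see that $U$ is abelian, I would describe its elements explicitly: each $u_p \in U$ acts on $S$ by $u_p(x_i) = x_i$ and $u_p(y) = y + p(x_0,\dots,x_{n-1})$ for some $p \in k[x_0,\dots,x_{n-1}]_r$. A direct composition gives
\[
(u_p \circ u_q)(y) = u_p(y) + u_p(q(x)) = y + p(x) + q(x),
\]
so $u_p \circ u_q = u_{p+q} = u_q \circ u_p$ and therefore $U \cong \GG_a^L$ as an algebraic group.

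Third, for the $(\fC^*)$ condition: since $a_n = r > 1 = a_{n-1} = \cdots = a_0$, Lemma~\ref{Zmin_point} tells us that $\Zm$ consists of the single point $[y^{d/r}] \in \cY_d$, so it suffices to prove $\stab_U([y^{d/r}]) = \{e\}$. For any $u_p \in U$,
\[
u_p \cdot y^{d/r} = (y + p(x))^{d/r} = y^{d/r} + \tfrac{d}{r}\, y^{d/r-1} p(x) + \text{(terms of lower order in } y\text{)}.
\]
For this polynomial to be a scalar multiple of $y^{d/r}$, all higher-order terms must vanish; in particular the $k=1$ binomial term forces $\tfrac{d}{r}\, p(x) = 0$, and since $\Char k = 0$ and $d/r \geq 1$ this yields $p = 0$, i.e.\ $u_p = e$.

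The only real obstacle is the structural computation in the first step, and the cleanliness of the whole argument hinges on the fact that only one non-minimum weight occurs in $\PP(1,\dots,1,r)$: there are no unipotent moves among the $x_i$'s, and the action of $U$ on $y$ is simply an additive translation by a degree-$r$ polynomial in the $x_i$'s. For more general weighted projective spaces the matrix description from Lemma~\ref{E2} is far more intricate and both the abelian property and the triviality of stabilisers on $\Zm$ would require separate analysis.
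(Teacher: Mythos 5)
Your proposal is correct and follows essentially the same route as the paper: both identify the unipotent radical with translations $y \mapsto y + p(x)$ for $p \in k[x_0,\dots,x_{n-1}]_r$, deduce commutativity by composing two such maps, and reduce the $(\fC^*)$ condition via Lemma~\ref{Zmin_point} to showing $\stab_U([y^{d'}])$ is trivial. Your binomial-expansion step spelling out why $(y+p(x))^{d'}$ can only be a scalar multiple of $y^{d'}$ when $p=0$ is a slightly more explicit version of the paper's one-line assertion, and correctly accounts for the projective ambiguity.
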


\begin{proof}
Let $G = \autgr(S)$; then a general automorphism in the unipotent radical $\phi \in U \sset G$ is given by
\[
    \phi : \left\{ \begin{array}{lr}
        x_i   \longmapsto   x_i \\
        y_j   \longmapsto   y_j+ p_{\phi,j}(x_1, \dots , x_{n})
        \end{array}\right.
\]
for $1 \leq i \leq n$ and $1 \leq j \leq m$ and with $p_{\phi,j} \in k[x_1, \dots , x_{n}]_r$.
Composing two such elements $\phi,\psi \in U$ gives
\[
    \phi \circ \psi : \left\{ \begin{array}{lr}
        x_i  \longmapsto  x_i \\
        y_j  \longmapsto  y_j+ p_{\phi,j}(x_1, \dots , x_{n})\!\!\!\!&+ \,\, p_{\psi,j}(x_1, \dots , x_{n}).
        \end{array}\right. 
\]
It follows that any two automorphisms commute and hence $U$ is abelian and thus
\[U \simeq \GG_a^L.\]

Let us prove the second statement. 
Let $d' = \frac{d}{b}$ and recall that $\Zm = \PP(k[y_1, \dots , y_m]_{d'})$.
Take some $[f] \in \Zm^{\ss,R}$ and $\phi \in \stab_U([f])$.
Assume that $\phi$ in non-trivial.

Since the $\GL_n$ acts trivially on $\Zm$ it follows that $\Zm^{\ss,R} = \Zm^{\ss,\GL_m}$.
Note that this only holds for semistability.
Then considering the standard maximal torus in $\GL_m$, the reductive Hilbert-Mumford criteria imply that the barycenter of the section polytope of $\cO_{\Zm}(d')$ is contained in the Newton polytope of $[f]$.
Since $d>b+1$, it follows that each variable $y_j$ appears in $[f]$ and thus $\phi \cdot [f]$ must contain at least one $x_i$.
Hence $\phi \cdot [f] \neq [f]$ and we have a contradiction.
\end{proof}

This encompasses many interesting examples of weighted projective hypersurfaces.
For example smooth degree 4 hypersurfaces in $\PP(1, \dots , 1 ,2)$ from a large family of Fano varieties of Fano index 2.
Moreover, degree 2 smooth del Pezzo surfaces are completely described by degree 4 surfaces in $\PP(1,1,1,2)$ and a blow-up of the corresponding moduli space has been shown to describe the K-stable degree 2 del Pezzo surfaces \cite{OSS}.
Other examples of interest \cite{LP20} are the K-stable del Pezzo surfaces of Fano index 2 which present themselves as degree $2a$ hypersurfaces in $\PP(1,1,a,a)$ for $a\geq 2$.

\br 
The condition $(\fC)$ is not satisfied for every weighted projective space; for example, consider $X= \PP(1,2,3) = \proj k[x,y,z]$ and $d = 6$. Then 
\[Z_{\min} = \{(0: \cdots : 0 :1)\} = \left\{ \left[ z^2\right]\right\}\]
is a point by Lemma \ref{Zmin_point} and corresponds to the hypersurface defined by $z^2$. However, the additive 1-parameter subgroup of $U$
\[a(u) : y \longmapsto y + ux^2\]
acts trivially on $Z_{\min}$.

There are other examples of weighted projective space for which the condition $(\fC)$ is satisfied. For example, let $X = \PP(2,2,3,3,5)$ with coordinates $x_1,x_2,y_1,y_2,z$ such that $\deg x_i = 2$, $\deg y_i = 3$ and $\deg z = 5$. Let $d = 20$ and note that
\[\aut(X) = ((\GL_2 \times \GL_2 \times \GG_m) \ltimes (\GG_a)^4) / \lama(\GG_m).\]
Again we have that $Z_{\min}$ is a point corresponding to the hypersurface $z^4$. Then the action of $(\GG_a)^4$ is trivial on coordinates $x_1,x_2,y_1,y_2$ and on $z$ the action is defined by 
\[(A_1,A_2,A_3,A_4) \cdot z = z+ A_1x_1y_1 + A_2x_1y_2 + A_3x_2y_1 + A_4x_2y_2,\]
where $(A_1,A_2,A_3,A_4) \in (\GG_a)^4$. It follows that the $(\GG_a)^4$-stabiliser of $\left[z^4\right]$ is trivial.
\er

\br\label{rmk_ss_s_intrinsic}
Let $X = \PP(a_0 ,\dots , a_n)$ be a weighted projective space and $d = \lcm(a_0, \dots ,a_n)$. If the condition $(\fC)$ holds for the action of $\hat{U} \sset G$ on $\cY_d$ with respect to $\oone$, an induction argument on $l$ shows it also holds for the $G$-action on $\cY_{ld}$ for every $l >0$.
Hence the condition $(\fC)$ for $\hat{U}$ is an intrinsic property of $X$. Thus we may talk about $X$ satisfying the condition $(\fC)$.
\er

\br\label{rmk_ep_lin}
Let us revisit the definition of a well-adapted linearisation from Remark \ref{rmk_defn_well_adpated}. The well-adapted requirement is needed in the proof of the $\hat{U}$-theorem \cites{BDHK_gr_uni,DBHK_proj_compl} and insures that semistability and stability coincide for the grading $\GG_m$ once a high tensor power of the linearisation is taken. However, this high power is determined only by the $U$-invariants. Hence if we take $N>d'$ then we have the weight diagram shown in Figure \ref{fig_wt_distribution}, where $\omega_{\min +1}$ is the next biggest weight and $r(N)$ grows linearly with $N$, see Example \ref{ex_wpp112_deg2_weights}.\footnote{This choice of lower bound $N > d'$ is not optimal; we could take a smaller $N$.} Thus we can choose some $N>>0$ such that the linearisation $\oone$ is well-adapted and we can readily apply the $\hat{U}$-theorem.
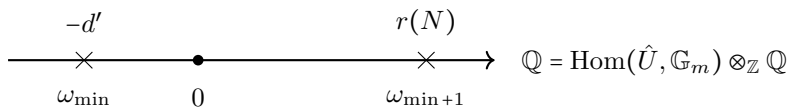
\begin{figure}[h]
\centering
\begin{tikzpicture}

\draw[thick, ->] (-2.5,0) -- (3.9,0);
\node at (0,0) [circle,fill=black, inner sep=0pt,minimum size=4pt] {};

\node at (0,-0.5) {$0$};
\node at (-1.5,-0.5) {$\omega_{\min}$};
\node at (-1.5,0) [cross=3.5] {};
\node at (-1.5,0.5) {$-d'$};
\node at (3,-0.5) {$\omega_{\min+1}$};
\node at (3,0.5) {$r(N)$};
\node at (3,0) [cross=3.5] {};
\node at (6,0) {$\QQ = \Hom(\hat{U},\GG_m) \otimes_\ZZ \QQ$};

\end{tikzpicture}
\caption{The weight diagram for $\hat{U} = \lambda_{g,N}(\GG_m) \ltimes U$ for $N>>0$.}\label{fig_wt_distribution}
\end{figure}
For the details of the proof we refer to \cites{BDHK_gr_uni,DBHK_proj_compl} and for a discussion on the variation of the grading 1-parameter subgroup we refer to \cite{variationNRGIT}. From now on we shall assume that we have taken $N >>0$.
\er
\bc\label{cor_Uhat_stab}
Let $X = \PP(a_0, \dots , a_n)$ be a weighted projective space and $d$ be a Cartier degree. Assume that $X$ satisfies the condition $(\fC)$ for the action of $\hat{U}$ on $\cY_d$ with respect to $\oone$, where $\hat{U} = \lambda_{g,N}(\GG_m) \ltimes U \sset G = \autgr(S)$ for $N>>0$. Then the following statements hold.
\begin{enumerate}
\item The quotient morphism
\[q_U :\cY_{\min}^0 \to \cY_{\min}^0 / U\]
is a principal $U$-bundle. 

\item The quotient morphism
\[q_{\hat{U}} : \cY_d^{ \s , \hat{U}} \longrightarrow \cY_d \sslash \hat{U} \]
is a projective geometric quotient, where $\cY_d^{ \s , \hat{U}} = \cY_{\min}^0 - U \cdot \Zm$.
\item The subset
\[ \YQS /  \hat{U} = q_{\hat{U}} (\YQS) \subseteq \cY_d \sslash \hat{U}\]
is open, and thus $\YQS / \hat{U}$ is quasi-projective.
\end{enumerate}
\ec
\begin{proof}
The first statement follows directly from the $\hat{U}$-Theorem (Theorem \ref{thm_Uhat_thm}) and the second statement follows from the fact that a geometric quotient is an open map (since it is a topological quotient) and that Proposition \ref{Uhat_stab} we that $\YQS \sset \cY^{s,\hat{U}}$ is an open subset.
\end{proof}

\subsection{Stability of quasismooth hypersurfaces in weighted projective space}

We present a proof that quasismooth hypersurfaces are stable using the non-reductive Hilbert-Mumford criteria of Theorem \ref{thm_nrgit_hm_2}. The Hilbert-Mumford criterion says that if $G$ is a linear algebraic group with graded unipotent radical acting on a projective variety $Y$, then a point $y \in Y$ is stable if and only if every $G$ translate $g \cdot y$ is stable for a maximal torus $T\sset G$ containing the grading $\GG_m$. We shall prove stability of quasismooth hypersurfaces for a maximal torus $T$ and then use the fact that the quasismooth locus is invariant under the action of the automorphism group and the NRGIT Hilbert-Mumford criterion to deduce stability for $G$. The proof of $T$-stability uses the Newton polytope of a hypersurface, which we define as the weight polytope for the canonical maximal torus.

Let $X = \PP(a_0, \dots , a_n)$ be a well-formed weighted projective space such that $a_{i}\leq a_{i+1}$ and $d$ be a Cartier degree. Suppose that $T \sset G = \autgr(S)$ is the maximal torus of $G$ given by diagonal matrices and define $\overline{T} = T / \lama(\GG_m)$ to be the quotient by the 1-parameter subgroup $\lama$. Recall from Section \ref{sec_s_global_stab} that the stability of a hypersurface with respect to $T$ is determined by its weight polytope considered inside the character space $H = X^*\left(\overline{T}\right) \otimes_\ZZ \QQ \simeq \QQ^{n+1} / \QQ \cdot \underline{a}$. The weight polytope is a subpolytope of the section polytope.

\bd\label{def_sec_polytope}
Let $X = \PP(a_0, \dots , a_n)$ be a well-formed weighted projective space and $d \in \ZZ_{>0}$ be a Cartier degree. Consider $H^0(X,\cO_X(d)) = k[x_0, \dots , x_n]_d$ and define 
\[A = \{(i_0, \dots , i_n) \in \ZZ^{n+1} \, \, | \, \, i_j \geq 0 \text{ and }  \sum a_ji_j = d\}\]
to be the set of exponent vectors of the monomials. The set $A$ is precisely the set of torus weights and let $\tilde{P_d} = \conv(A) \sset \QQ^{n+1}$. We define the \em section polytope \em to be $P_d \sset H=\QQ^{n+1}/ \QQ \cdot \underline{a}$, the image of $\tilde{P_d}$ in $H$. Explicitly, choosing a basis for $H$, we have that 
\[P_d = \conv\left(\frac{d}{a_0}e_0 , \dots , \frac{d}{a_{n-1}}e_{n-1} , -\frac{d}{a_n^2}(a_0e_0 + \cdots + a_{n-1}e_{n-1})\right).\]
Note that $P_d$ contains the origin.
\ed

\bd
For a degree $d$ hypersurface $Y \sset X$, we define the \em Newton polytope of $Y$ \em by
\[\NP(Y) = \conv(\wt_{\overline{T}}(Y)) \sset H = X^*\left(\overline{T}\right) \otimes_\ZZ \QQ.\]
\ed

\bex
Let $X = \PP(1,1,2)$ and consider Figure \ref{fig_112_sec_polytope} where we have the section polytope of $\cO_{X}(4)$ and the Newton polytope of $f = xy^3 + x^2z+y^2z+z^2 $. Note that by $\hat{U}$-stability, for a point to be semistable we must have that the circled vertex corresponding to $\Zm = \{z^2\}$ appear in the weight space and thus also in the Newton polytope.

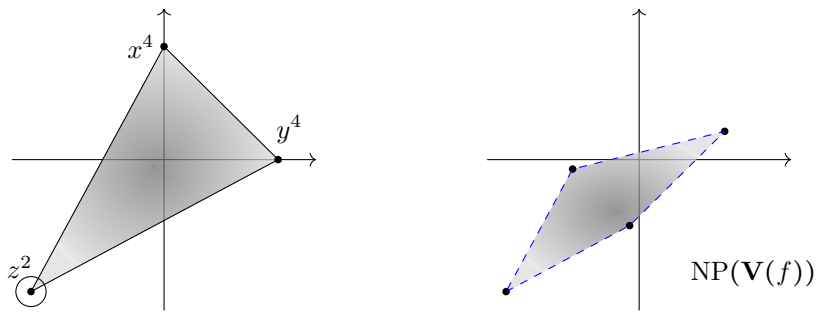
\begin{figure}[h]
\centering
\begin{tikzpicture}

\draw  [->] (0,-2) -- (0,2);
\draw [->] (-2,0) -- (2,0);
\fill[shading=radial, nearly transparent] (-1.75,-1.75) to (1.5,0)  to (0,1.5);

\node [above] at (-1.90,-1.7) {$z^2$};
\node [circle, fill, inner sep=1.0pt] at (-1.75,-1.75) {};

\node [left] at (0,1.5) {$x^4$};
\node [circle, fill, inner sep=1.0pt] at (0,1.5) {};

\node [above] at (1.65,0.1) {$y^4$};
\node [circle, fill, inner sep=1.0pt] at (1.5,0) {};

\node [circle, draw, inner sep=4.0pt] at (-1.75,-1.75) {};

\draw  [-] (-1.75,-1.75)-- (1.5,0);
\draw  [-]  (1.5,0) -- (0,1.5);
\draw [-] (0,1.5) -- (-1.75,-1.75);
\end{tikzpicture}
\hspace{2cm}
\begin{tikzpicture}

\draw  [->] (0,-2) -- (0,2);
\draw [->] (-2,0) -- (2,0);
\fill[shading=radial, nearly transparent] (-1.75,-1.75) to (-0.125,-0.875) to (1.125,0.375)  to (-0.875,-0.125);

\node [circle, fill, inner sep=1.0pt] at (-1.75,-1.75) {};

\node [circle,fill, inner sep=1pt] at  (1.125,0.375) {};

\node [circle,fill, inner sep=1pt] at  (-0.125,-0.875) {};

\node [circle,fill, inner sep=1pt] at  (-0.875,-0.125) {};

\node [] at (1.5,-1.5) {$\NP(\V(f))$};

\draw[dashed,blue] (-1.75,-1.75) to (-0.125,-0.875) to  (1.125,0.375)  to (-0.875,-0.125) to (-1.75,-1.75);

\end{tikzpicture}
\caption{On the left is the section polytope $P$ of $\cO(4)$. On the right is the Newton polytope of $\V(f)$ where $f = xy^3 + x^2z+y^2z+z^2$.}\label{fig_112_sec_polytope}
\end{figure}
\eex

\bl\label{lemma_qs_NP_contains_origin}
Suppose that $Y \sset X = \PP(a_0, \dots ,a_n)$ is a quasismooth hypersurface of degree \linebreak$d \geq \max\{a_0, \dots a_n\}+2$ defined by a weighted polynomial $f \in k[x_0, \dots , x_n]_d$. Then $\NP(Y) \sset H$ contains the origin.
\el

\begin{proof}
Let $P \sset H$ be the section polytope of $\cO_X(d)$ as in Definition \ref{def_sec_polytope}. Then $P$ contains the origin. Suppose that $\NP(Y)^\circ$ does not contain the origin. Then $\NP(Y)$ is contained in a closed half-space of $H$ defined by a hyperplane passing through the origin. Thus all the monomials of $f$ lie in this half space and since $Y$ is quasismooth this is impossible by Theorem \ref{thm_flecther_qs_criterion}. Indeed, suppose that this is the case, then since $d \geq \max(a_0, \dots , a_n)+2$, a vertex and immediate surrounding lattice points of the section polytope will certainly lie in the complementary open half-space. This is a contradiction by Lemma \ref{lemma_fletcher_qs_crit}.
\end{proof}

We are now in a position to prove the main theorem of the section.
\begin{theorem}\label{thm_main_NP_proof}
Let $X = \PP(a_0 , \dots , a_n)$ be a well-formed weighted projective space such that $n>1$ which satisfies the condition $(\fC)$ and let $d \geq \max\{a_0, \dots , a_n\}+2$ be a Cartier degree. Let $G = \autgr(S)$ be the graded automorphism group of the Cox ring and consider the action of $G$ on $\cY = \cY_d$ linearised with respect to $\oone$. Define the graded unipotent radical $\hat{U} = \lambda_{g,N}(\GG_m) \ltimes U \sset G$ for some fixed $N >> d$. We have the inclusion
\[\YQS \sset \cY^{\s,G}.\]
In particular, there exists a geometric quotient $\YQS / G$ and hence a coarse moduli space of quasismooth hypersurfaces as a quasi-projective variety. Moreover, $\cY \sslash \! G$ is a compactification of $\YQS / G$.
\end{theorem}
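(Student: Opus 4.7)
The plan is to apply the non-reductive Hilbert--Mumford criterion (Theorem \ref{thm_nrgit_hm_2}) to reduce $G$-stability to torus stability. Concretely, to show $[f]\in \cY_d^{\s,G}$ for a quasismooth $[f]$, it suffices to verify $0 \in \conv_T(g\cdot [f])^\circ$ for every $g\in G$, where $T\sset G$ is the maximal torus of diagonal matrices and contains the grading $\GG_m$ (via Proposition \ref{prop_alternative_grading_wps}). The decisive observation is that $\YQS$ is $G$-invariant: quasismoothness of $f$ is preserved under every $g\in G$, so every translate $g\cdot [f]$ is again quasismooth. Therefore it is enough to prove that an arbitrary quasismooth hypersurface is $T$-stable, and the criterion for $T$-stability in the presence of the global stabiliser $\lama(\GG_m)$ is given by Theorem \ref{thm_stab_HM_polytope}: $0\in \NP(\V(f))^\circ \sset H$.

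The latter is precisely the content of Lemma \ref{lemma_qs_NP_contains_origin} (interpreted in the interior, as its proof in fact shows). The convex-geometric input is: if $0$ were not in the interior of $\NP(\V(f))$, there would be a hyperplane through $0$ leaving $\NP(\V(f))$ in a single closed half-space. Since $P_d$ always contains $0$ in its interior, such a hyperplane would cut off a vertex (together with its neighbouring lattice points) of the section polytope $P_d$. Precisely those ``missing'' monomials are the ones required to exist by Fletcher's quasismoothness criterion (Theorem \ref{thm_flecther_qs_criterion} together with Remark \ref{rmk_fletcher_qs_criteion}), producing a contradiction. The bound $d \geq \max\{a_0,\ldots,a_n\}+2$ is used exactly to guarantee that the vertices $x_i^{d/a_i}$ of $P_d$ together with the surrounding lattice monomials $x_i^{d/a_i - 1}x_j$ (which are those picked out by Fletcher's condition (2)) really exist in degree $d$, so the combinatorial criterion is applicable at every vertex.

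The main obstacle is this combinatorial-to-convex translation: one must argue uniformly over all potential separating hyperplanes through the origin, converting Fletcher's existence-of-monomials conditions into statements about the vertex structure of $P_d$. Once the lemma is in hand, everything else is formal: the hypothesis $(\fC^*)$ together with the well-adaptedness observation of Remark \ref{rmk_ep_lin} (achieved by taking $N\gg d$ in the grading $\lambda_{g,N}$) makes Theorem \ref{non_red_git_thm} and hence Theorem \ref{thm_nrgit_hm_2} applicable.

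For the remaining statements, once the inclusion $\YQS \sset \cY^{\s,G}$ is established, Theorem \ref{non_red_git_thm} provides a geometric quotient $\cY^{\s,G}\to \cY^{\s,G}/G$; restricting to the $G$-invariant open subset $\YQS$ yields the geometric quotient $\YQS/G$, which is therefore a quasi-projective coarse moduli space for quasismooth hypersurfaces of degree $d$ in $X$. The projectivity of the enveloping quotient $\cY_d \sslash G$ from Theorem \ref{non_red_git_thm} together with the openness of $\YQS/G \sset \cY_d \sslash G$ (which follows from $q_G$ being a quotient map) supplies the desired compactification.
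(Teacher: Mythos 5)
Your proposal follows essentially the same route as the paper: reduce to $T$-stability via the non-reductive Hilbert--Mumford criterion using the $G$-invariance of $\YQS$, then establish $T$-stability by showing the Newton polytope of a quasismooth hypersurface contains the origin in its interior via Fletcher's criterion (Lemma \ref{lemma_qs_NP_contains_origin}). You are right to flag that the lemma must be read as an interior statement for stability (rather than mere semistability) --- its proof does establish this, and the paper uses it in exactly this way.
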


\begin{proof}
We drop the linearisation from the notation. Note that it suffices to prove that $\YQS \sset \cY^{\s,T}$, since by the non-reductive Hilbert-Mumford criterion of Theorem \ref{thm_nrgit_hm_2} we get $\YQS \sset \cY^{\s,G}$. Indeed, as $\YQS$ is $G$-invariant, we have that $\YQS \sset g \cdot \cY^{\s,T}$ and hence
\[\YQS \sset \bigcap_{g \in G} g \cdot \cY^{\s,T} = \cY^{\s,G},\]
by Theorem \ref{thm_nrgit_hm_2}.

Let us prove that $\YQS \sset \cY^{\s,T}$. Suppose that $Y \sset X$ is a quasismooth hypersurface of degree $d$. Then by Lemma \ref{lemma_qs_NP_contains_origin}, the polytope $\NP(Y)$ contains the origin $O$. Thus by the Hilbert-Mumford criterion of Theorem \ref{thm_stab_HM_polytope} quasismooth hypersurfaces are $T$-stable for the twisted linearisation $\oone$.
\end{proof}

\br
In the case where the condition $(\fC)$ is not satisfied, there is a blow-up procedure outlined in \cite{DBHK_proj_compl} where one performs a sequence of blow-ups of the locus in $\cY$ where there is a positive dimensional $U$-stabiliser.
Using this procedure it is expected that we can remove the requirement that the $(\fC)$ condition holds.
This will be pursued in further work.
\er

\subsection{Explicit construction for $\PP(1,\dots,1,r)$}

We provide an explicit constructions of a coarse moduli spaces of quasismooth hypersurfaces in the case where $X = \PP(1, \dots , 1 , r) = \proj k[x_1, \dots , x_n , y]$ and $d = d' \cdot r$ with $d' >0$ and $n >1$. This example illustrates the `quotienting in stages' procedure.

We give a direct construction of these coarse moduli space using Lemma \ref{lemma_quotients_in_stages}. 

\begin{theorem}
Let $X =  \PP(1, \dots , 1 , r)$ and $d = d' \cdot r$ be a Cartier degree such that $d \geq r+2$. Let $\cY = \PP(k[x_1, \dots , x_n , y]_d)$ be the parameter space of degree $d$ hypersurfaces. Then there exists a geometric quotient for the $G$-action on $\YQS$
\[\YQS \longrightarrow\YQS / G \]
which is coarse moduli space and a projective over affine variety.
\end{theorem}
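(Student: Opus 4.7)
The plan is to apply Lemma \ref{lemma_quotients_in_stages} to the Levi decomposition $G = R \ltimes U$, where by Proposition \ref{ss_s_condition1} the unipotent radical $U \cong \GG_a^L$ is abelian and the Levi factor is $R = \GL_n \times \GG_m$. The critical facts enabling an explicit two-stage construction are that $\Zm$ is a single point (Lemma \ref{Zmin_point}) and that the non-quasismooth locus of $\cY$ contains the divisor $\V(c)$ complementary to $\cY_{\min}^0$, where $c$ denotes the coefficient of $y^{d'}$ (Example \ref{ex_rat_scrol_nqs_locus}).

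In the first stage I would form the $U$-quotient. Since $\YQS \subseteq \cY_{\min}^0$ and $\Zm$ is a single point, Remark \ref{rmk_triv_U_quot} yields that $\cY_{\min}^0$ is an affine open subscheme of $\cY$ and that $q_U : \cY_{\min}^0 \to \cY_{\min}^0/U$ is a trivial principal $U$-bundle onto an affine variety. Restricting to the $G$-invariant open subset $\YQS$ then produces a geometric $U$-quotient $\YQS \to \YQS/U$ realised as an open subvariety of the affine variety $\cY_{\min}^0/U$.

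In the second stage I would quotient by the reductive group $R/\lama(\GG_m)$, which acts on $\cY_{\min}^0/U$ because $\lama$ acts trivially on $\cY$. Applying reductive GIT to this affine variety with the character $\chi : R \to \GG_m$ corresponding to the $\GG_m$-weight of the monomial $y^{d'}$, the resulting quotient
\[\cY_{\min}^0/U \sslash_{\chi} R/\lama(\GG_m) = \proj \bigoplus_{n \geq 0}\cO(\cY_{\min}^0/U)^{R/\lama(\GG_m),\, \chi^n}\]
is projective over the affine categorical quotient $\spec \cO(\cY_{\min}^0/U)^{R/\lama(\GG_m)}$. By Theorem \ref{finite_stabilisers_thm} combined with Lemma \ref{lemma_quotients_in_stages}(2), every point of $\YQS/U$ has finite $R/\lama(\GG_m)$-stabiliser. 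Since the $(\fC^*)$ condition holds by Proposition \ref{ss_s_condition1}, Theorem \ref{thm_main_NP_proof} guarantees $\YQS \subseteq \cY^{\s,G}$; translating this through the $U$-quotient shows that $\YQS/U$ lies in the $\chi$-stable locus of $\cY_{\min}^0/U$. Lemma \ref{lemma_quotients_in_stages}(3) then realises $\YQS/G = (\YQS/U)/(R/\lama(\GG_m))$ as a geometric quotient which sits as an open subvariety of the projective-over-affine GIT quotient above, yielding the claimed coarse moduli space.

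The main obstacle is identifying the correct linearising character $\chi$ and verifying \emph{directly} via the reductive Hilbert--Mumford criterion that $\YQS/U$ lies in the $\chi$-stable locus, so that the two-stage construction yields a genuine geometric quotient on the quasismooth locus rather than only a categorical one. While Theorem \ref{thm_main_NP_proof} supplies this abstractly, the purpose of the explicit two-stage approach here is to exhibit the projective-over-affine structure transparently, as an instance of classical reductive GIT on an affine variety with a character linearisation, bypassing the full Newton-polytope machinery.
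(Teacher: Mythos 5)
There is a genuine gap, and it sits exactly where your second stage begins. The linchpin of the paper's argument is the observation, coming from Example \ref{ex_rat_scrol_nqs_locus}, that for $X = \PP(1,\dots,1,r)$ the non-quasismooth locus is the single hypersurface $\YNQS = \V(c\cdot \Delta_A)$, so that $\YQS = \cY_{c\cdot\Delta_A}$ is itself an \emph{affine} variety. You cite that example only for the weaker fact that $\V(c)$ lies in $\YNQS$, and consequently you only ever know $\YQS/U$ as an open subvariety of the affine variety $\cY^0_{\min}/U$ --- which does not make it affine. This matters twice. First, Lemma \ref{lemma_quotients_in_stages}(3), which you invoke at the end, requires $X/N$ to be affine; without knowing $\YQS/U$ is affine you cannot apply it. Second, the projective-over-affine structure asserted in the theorem comes precisely from taking the reductive quotient of the affine variety $Q = \YQS/U$; your construction instead exhibits $\YQS/G$ as an open subvariety of a projective-over-affine GIT quotient of $\cY^0_{\min}/U$, which only yields quasi-projectivity, not the claimed structure.

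Once $\YQS$ is known to be affine, your entire second stage collapses to something much simpler and character-free, which is what the paper does: $Q=\YQS/U$ is affine (trivial $U$-bundle over an affine base, via Remark \ref{rmk_triv_U_quot}), the reductive group $R$ acts on $Q$, and by Theorem \ref{finite_stabilisers_thm} all stabilisers in $\YQS$ are finite for $d\geq r+2$, so all $G$-orbits in $\YQS$ are closed; Lemma \ref{lemma_quotients_in_stages}(3) then gives the geometric quotient $\YQS/G = Q/R$ directly as a reductive quotient of an affine variety. In particular no linearising character $\chi$, no identification of a stable locus in $\cY^0_{\min}/U$, and no appeal to Theorem \ref{thm_main_NP_proof} are needed --- indeed the stated purpose of this construction is to bypass the Newton-polytope/NRGIT stability argument, whereas your proposal leans on it and explicitly leaves the direct verification (your ``main obstacle'') unresolved.
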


\begin{proof}
Let $c \in H^0(\cY,\oone) = (k[x_1, \dots , x_n ,y]_d)^\vee$ be the section corresponding to the coefficient of the monomial $y^{d'}$.  By Example \ref{ex_rat_scrol_nqs_locus}, we have that $\YQS = \cY_{c \cdot\Delta_A}$ and hence $\YQS$ is an affine variety. Note that we have the inclusion $\YQS \sset \cY_c$. In this case, $Z_{\min} = \{(0: \cdots : 0 : c) \, \, | \, \, c \neq 0\}$ is a point, and so by Remark \ref{rmk_triv_U_quot} the quotient 
\[q_U : \cY_{\min}^0 \longrightarrow \cY_{\min}^0/U\]
from Corollary \ref{cor_Uhat_stab} is a trivial $U$-bundle. Hence $\cY_{\min}^0/U$ is affine by \cite[Theorem 3.14]{asok2009a1}. Thus $\YQS \to \YQS /U$ is a trivial bundle and $Q = \YQS/U$ is an affine variety.

Consider the action of $R = G /U $ on $Q$. Since $\YQS$ is affine, Lemma \ref{lemma_quotients_in_stages} implies that $Q$ admits a geometric quotient by $R$ if and only if all the $G$-orbits are closed in $\YQS$. Then as $d \geq r+2$, Theorem \ref{finite_stabilisers_thm} implies that all stabiliser groups are finite giving that the action on $\YQS$ is closed. Hence we have a geometric quotient
\[\YQS / G = Q / R.\]
Since $Q$ is an affine variety and $Q/R$ is a reductive quotient, we conclude that $\YQS / G$ is a projective over affine variety.
\end{proof}

\bex
Suppose that $X = \PP(1,1,2)$ and $d = 6$. Then quasismooth hypersurfaces are exactly Petri special curves of genus 4 in $X$. Thus $(\cY_6)^{\QS} / G$ is an projective over affine coarse moduli space of Petri special curves. This moduli space is a divisor on the moduli space of genus 4 curves (see \cite{tommasi}).
\eex

\bex
Let $X = \PP(1,1,1,2)$ and consider $d = 4$. In this case quasismoothness coincides with smootheness. The smooth hypersurfaces are exactly degree 2 del Pezzo surfaces. Hence $(\cY_4)^{\QS} / G$ is a projective over affine coarse moduli space of degree 2 del Pezzo surfaces.
\eex

\subsection{Stability of hypersurfaces in products of projective space}\label{sec_prod_porj_spaces}

Let $X = \PP^n \times \PP^m$ and $(d,e) \in \ZZ^2 \simeq \Pic(X)$. Then $\autgr(S) = \GL_{n+1} \times \GL_{m+1}$ and we consider the action of $G$ on the projective space $\cY = \PP(k[x_0, \dots , x_n ; y_0, \dots , y_m]_{(d,e)})$. Note that every stabiliser contains the subgroup 
\begin{align*}
\lambda : \GG_m^2& \longrightarrow \autgr(S) \\
 (t_1,t_2) &\longmapsto (t_1 I_{n+1} , t_2 I_{m+1}).
\end{align*}
We may replace the action of $\autgr(S)$ with the action of the subgroup $G = \SL_{n+1} \times \SL_{m+1} \sset \autgr(S)$, as the orbits are the same. By doing this, we remove the global stabiliser and moreover, we are now in the situation where $G$ has no non-trivial characters and so $\Delta_A$ is a true invariant for the $G$-action.

\begin{prop}\label{prop_prod_proj_finite_stabs}
Let $X = \PP^n \times \PP^m$ be a product of projective spaces and $Y \sset X$ be a smooth hypersurface of degree $(d,e) \in \ZZ^2$. If $d > n+1$ and $e > m+1$, then $\dim \stab_G(Y) = 0$.
\end{prop}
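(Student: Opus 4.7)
The strategy I would pursue is to combine adjunction with Matsumura's theorem on varieties of general type, and then rule out positive-dimensional subgroups of $G$ whose fixed locus contains $Y$.

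First, I would compute the canonical bundle of $Y$ via adjunction. Since $K_X = \cO_X(-n-1,-m-1)$ and $[Y] = \cO_X(d,e)$, the adjunction formula gives
\[K_Y = \cO_X(d - n - 1,\, e - m - 1)|_Y.\]
Under the hypotheses $d > n+1$ and $e > m+1$, both bidegree components are strictly positive, so $\cO_X(d-n-1, e-m-1)$ is ample on $X$, and its restriction to the closed subvariety $Y$ is ample on $Y$. Hence $Y$ is a smooth projective variety with ample canonical bundle. By a classical theorem of Matsumura, such a variety has a finite automorphism group, so $\aut(Y)$ is finite.

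Next, the action of $G$ on $X$ restricts to a morphism of algebraic groups $\rho : \stab_G(Y) \to \aut(Y)$. Since the target is finite, the identity component $\stab_G(Y)^0$ lies in $\ker \rho$, that is, it fixes $Y$ pointwise. So it suffices to show that no connected positive-dimensional subgroup $H \sset G = \SL_{n+1} \times \SL_{m+1}$ can fix every point of $Y$.

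Let $H_1 \sset \SL_{n+1}$ and $H_2 \sset \SL_{m+1}$ be the images of $H$ under the two projections. Because the $G$-action on $X = \PP^n \times \PP^m$ is the product action, one checks directly that
\[\text{Fix}_X(H) = \text{Fix}_{\PP^n}(H_1) \times \text{Fix}_{\PP^m}(H_2).\]
If $H$ is nontrivial and connected, then at least one of $H_1, H_2$ is connected and nontrivial. Say $H_1 \neq \{e\}$. The kernel of $\SL_{n+1} \to \aut(\PP^n)$ is the finite center, so $H_1$ acts nontrivially on $\PP^n$, whence $\text{Fix}_{\PP^n}(H_1) \subsetneq \PP^n$ is a proper closed subvariety. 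But the defining polynomial $f$ has bidegree $(d,e)$ with $e > 0$, so it is not pulled back from $\PP^n$; hence for a dense set of $x \in \PP^n$, $f(x,\cdot)$ is a nonzero polynomial in $y$ of degree $e$ and has a zero in $\PP^m$. Properness of the first projection then forces $\pi_1(Y) = \PP^n$. Combining, $Y \sset \text{Fix}_X(H) \sset \text{Fix}_{\PP^n}(H_1) \times \PP^m$ would give $\pi_1(Y) \sset \text{Fix}_{\PP^n}(H_1) \subsetneq \PP^n$, a contradiction. Thus $H_1 = \{e\}$, and symmetrically $H_2 = \{e\}$, so $H$ is trivial. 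Therefore $\stab_G(Y)^0 = \{e\}$ and $\dim \stab_G(Y) = 0$.

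The only potential obstacles are verifying cleanly the product structure of $\text{Fix}_X(H)$ (which is immediate from the product action) and invoking Matsumura's theorem in the correct form; everything else is formal once adjunction is in hand. It is worth noting that, in contrast to Theorem \ref{finite_stabilisers_thm}, a direct regular-sequence argument on the Jacobian ideal of $f$ is not available here, because the common vanishing of all partials has positive-dimensional support along $\{x=0\}\cup\{y=0\}$; this is precisely why the geometric route via ampleness of $K_Y$ is preferable.
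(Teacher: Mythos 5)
Your proof is correct, but its core differs from the paper's. The paper also starts from adjunction to get $\omega_Y \cong \cO_Y(d-n-1,e-m-1)$ very ample, but then identifies $\lie\aut(Y)$ with the space $H^0(Y,\cT_Y)$ of global vector fields (citing Matsumura's representability result) and kills that space directly: by Serre duality $H^0(Y,\cT_Y)\simeq H^{n+m-1}(Y,\Omega_Y\otimes\omega_Y)^\vee$, which vanishes by Kodaira--Nakano, so $\dim\aut(Y)=0$; it then concludes with the one-line inclusion $\stab_G(Y)\sset\aut(Y)$. You instead invoke Matsumura's finiteness theorem for varieties with ample canonical bundle as a black box, and -- where the paper is silent -- you explicitly treat the kernel of the restriction homomorphism $\stab_G(Y)\to\aut(Y)$, using the product decomposition of fixed loci $\mathrm{Fix}_X(H)=\mathrm{Fix}_{\PP^n}(H_1)\times\mathrm{Fix}_{\PP^m}(H_2)$ and the surjectivity of $\pi_1|_Y$ to show that no positive-dimensional connected subgroup of $\SL_{n+1}\times\SL_{m+1}$ fixes $Y$ pointwise. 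The paper's route is more self-contained cohomologically and yields the sharper statement $H^0(Y,\cT_Y)=0$; yours imports a stronger classical theorem but is more careful about the distinction between $\stab_G(Y)$ and its image in $\aut(Y)$, a point the paper's inclusion glosses over (harmlessly, as your kernel argument confirms). Your closing observation about why the regular-sequence argument of Theorem \ref{finite_stabilisers_thm} is unavailable here -- the partials of a bihomogeneous $f$ all vanish along $\{x=0\}\cup\{y=0\}$ in the affine cone -- is accurate.
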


\begin{proof}
Since $Y$ is a proper algebraic scheme, by \cite[Lemma 3.4]{matsumura1967representability}, we can identify the Lie algebra of the automorphism group of $Y$ with the vector space $H^0(Y,\cT_Y)$ of global vector fields on $Y$, where $\cT_Y$ is the tangent sheaf. If we show that $ H^0(Y,\cT_Y) = 0$, then we can conclude that $\dim \aut(Y) = 0$ and since $\stab_{G}(Y) \sset \aut(Y)$ we have that $\dim \stab_G(Y) = 0$.

Let us prove that $H^0(Y,\cT_Y) = 0$. Let $N = \dim Y = n+m -1$; then by Serre duality
\[H^0(Y,\cT_Y) \simeq H^N(Y, \Omega_Y \otimes \omega_Y)^\vee,\]
where $\omega_Y$ is the canonical line bundle of $Y$. Let $\cO_Y(1,1)$ be the restriction of $\cO_X(1,1) = \cO_{\PP^n}(1) \boxtimes \cO_{\PP^m}(1)$ to $Y$. By the adjunction formula we have that $\omega_Y \cong \cO_Y(d-n-1,e-m-1)$ and hence by our assumption we have that $\omega_Y$ is very ample. Then by Kodiara-Nakano vanishing (\cite[Theorem 1.3]{esnault1992vanishing}) we have that $H^N(Y,\Omega_Y(d-n-1,e-m-1)) = 0$. Hence
\[H^0(Y,\cT_Y) \simeq H^N(Y,\Omega_Y(d-n-1,e-m-1))^\vee = 0.\]
We conclude that $\dim \aut(Y) = 0$.
\end{proof}

\br
Note that $\aut(Y)$ may not be a linear algebraic group, and in general is only locally linear algebraic.
\er

\begin{theorem}\label{thm_prod_proj_space_moduli}
Let $X = \PP^n \times \PP^m$ be a product of projective spaces and $Y \sset X$ be a smooth hypersurface of degree $(d,e) \in \ZZ^2$. Consider the action of $G = \SL_{n+1} \times \SL_{m+1}$ on  $\cY = \PP(k[x_0, \dots , x_n ; y_0, \dots , y_m]_{(d,e)})$ with linearisation given by $\cO_{\cY}(1)$.  Then we have the open inclusion
\[\cY^{\SM} \sset \cY^{\ss}(\cO(1)),\]
where $\cY^{\SM}$ is the of smooth hypersurfaces. If $d > n+1$ and $e > m+1$ then we have the open inclusion
\[\cY^{\SM} \sset \cY^{\s}(\cO(1)).\]
In particular, there exists a coarse moduli space of smooth hypersurfaces of degree $(d,e)$.
\end{theorem}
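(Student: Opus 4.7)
Since $G = \SL_{n+1} \times \SL_{m+1}$ is semisimple it carries only the trivial character, so by Corollary~\ref{invar_A_discrim} the $A$-discriminant $\Delta_A$ is a genuine $G$-invariant section of $\cO_{\cY}(\deg \Delta_A)$. Moreover $G$ acts transitively on $X = \PP^n \times \PP^m$, so the $G$-sweep of the torus equals all of $X$, and Theorem~\ref{invar_A_discrim_locus} identifies $\nabla_A$ with the locus of hypersurfaces that are singular at \emph{some} point of $X$; since $X$ itself is smooth, quasismoothness coincides with smoothness and $\nabla_A = \cY \setminus \cY^{\SM}$. Thus $\cY^{\SM} = \cY_{\Delta_A}$ is the non-vanishing locus of an invariant section, which is contained in $\cY^{\ss}(\cO_{\cY}(1))$ by classical reductive GIT.

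\textbf{Stability, reduction to a Newton polytope statement.} Assume now $d > n+1$ and $e > m+1$. Proposition~\ref{prop_prod_proj_finite_stabs} gives finite stabilisers on $\cY^{\SM}$, so it remains only to verify the strict Hilbert--Mumford inequality $\mu([f], \lambda) < 0$ for every smooth $[f]$ and every non-trivial 1-PS $\lambda$ of $G$. Since $G$ is reductive, the Hilbert--Mumford criterion and the $G$-invariance of smoothness reduce this to showing $T$-stability of each smooth $[f]$ for one fixed maximal torus $T \subset G$, which in the weight polytope formulation of Theorem~\ref{thm_stab_HM_polytope} is the claim that the Newton polytope $\NP([f]) \subset X^*(T) \otimes_{\ZZ} \QQ$ contains the origin in its \emph{interior}.

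\textbf{The combinatorial step.} To prove this claim, I would mirror the argument of Lemma~\ref{lemma_qs_NP_contains_origin}. Suppose for contradiction that $0 \in \partial \NP([f])$. Then there exists a non-zero 1-PS $\lambda$ of $T$ with weights $(a_0, \dots, a_n; b_0, \dots, b_m)$ satisfying $\sum a_i = \sum b_j = 0$, such that every monomial $x^I y^J$ appearing in $f$ satisfies $\sum_i a_i I_i + \sum_j b_j J_j \geq 0$. After reordering $a_0 \leq \cdots \leq a_n$ and $b_0 \leq \cdots \leq b_m$, let $p$ and $q$ be the sizes of the initial (minimum-weight) blocks, and consider the coordinate subvariety $Z = \Span(x_0,\dots,x_p) \times \Span(y_0,\dots,y_q) \subset X$, which is $\lambda$-invariant. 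The weight inequality forces every monomial of $f$ that is \emph{not} supported on $Z$ to carry strictly positive $\lambda$-weight, and together with the bidegree bounds $d > n+1$, $e > m+1$ this says that a vertex of the $(d,e)$ section polytope and its immediate lattice neighbours lie in the excluded open half-space. The bigraded analog of Fletcher's quasismoothness criterion (Theorem~\ref{thm_flecther_qs_criterion}) then produces a common zero in $Z$ of $f$ and all its partials $\partial f/\partial x_i$, $\partial f/\partial y_j$, contradicting smoothness of $V(f)$.

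\textbf{Conclusion and main obstacle.} Once the interior containment is established, $\cY^{\SM} \subseteq \cY^s(\cO_{\cY}(1))$ follows, and classical reductive GIT applied to the ample linearisation $\cO_{\cY}(1)$ produces a geometric quotient $\cY^{\SM} \to \cY^{\SM}/G$, realising the coarse moduli space of smooth $(d,e)$-hypersurfaces as an open quasi-projective subvariety of the projective GIT quotient $\cY \sslash G$. The main obstacle is the bigraded Fletcher-type argument in the combinatorial step: one must catalogue precisely which near-vertex monomials are forced to appear in $f$ by smoothness along each factor direction, and verify that the half-space constraint provided by $\lambda$ is incompatible with the presence of all of them. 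The degree hypotheses $d > n+1$, $e > m+1$ enter here in exactly the same way they entered Proposition~\ref{prop_prod_proj_finite_stabs}, namely to guarantee enough room in each bidegree component for the required monomial combinatorics to go through.
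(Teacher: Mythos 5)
Your semistability argument is exactly the paper's: $G=\SL_{n+1}\times\SL_{m+1}$ has no non-trivial characters, so $\Delta_A$ is a true invariant by Corollary \ref{invar_A_discrim}, transitivity of the $G$-action on $X$ plus Theorem \ref{invar_A_discrim_locus} give $\cY^{\SM}=\cY_{\Delta_A}$, and non-vanishing of an invariant gives semistability. That half is fine.

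The stability half has a genuine gap. You reduce stability to the claim that the origin lies in the \emph{interior} of the Newton polytope of every smooth $[f]$, and then you only describe how you ``would'' prove this via a bigraded analogue of Fletcher's criterion, explicitly flagging the combinatorial step as the main obstacle. No such bigraded criterion is stated or proved anywhere in the paper (Theorem \ref{thm_flecther_qs_criterion} is for weighted projective space, and Lemma \ref{lemma_qs_NP_contains_origin} only puts the origin in the polytope, not in its interior), so as written the argument does not close. The irony is that you already hold all the ingredients for a complete proof and do not need the Hilbert--Mumford computation at all: once you know $\cY^{\SM}=\cY_{\Delta_A}\sset\cY^{\ss}$ is an invariant affine open subset which is saturated (it is the preimage of a principal open subset of the GIT quotient, since $\Delta_A$ descends), and Proposition \ref{prop_prod_proj_finite_stabs} gives finite stabilisers there when $d>n+1$ and $e>m+1$, then every orbit in $\cY_{\Delta_A}$ has dimension $\dim G$, so no orbit can lie in the closure of another; hence all orbits are closed in this saturated open subset of $\cY^{\ss}$ and every point of $\cY^{\SM}$ is stable. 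This is precisely the paper's (very short) argument, and it is the classical Mumford argument for smooth hypersurfaces in $\PP^n$. Either carry out that argument, or actually prove the interior-containment claim; at present the stability statement is asserted but not established.
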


\begin{proof}
First, note that the discriminant $\Delta_A$ is a true invariant for the $G$-action since $G$ has no non-trivial characters. Then by Theorem \ref{invar_A_discrim_locus}, we have that
\[\cY^{\SM} = \cY_{\Delta_A},\]
since $G$ acts transitively on $X$ and hence $\cY^{\SM} \sset \cY^{\ss}$. Finally, if $d > n+1$ and $e > m+1$, by Proposition \ref{prop_prod_proj_finite_stabs}, we have that the stabiliser for every point in $\cY^{\SM}$ is finite and hence all the orbits are closed. It follows that $\cY^{\SM} \sset \cY^{\s}$.
\end{proof}

\bibliography{referencesVicky}
\end{document}